\newcommand{\R}{\mathbb{R}} 
\newcommand{\N}{\mathbb{N}} 
\newcommand{\C}{\mathbb{C}} 
\newcommand{\E}{\mathbb{E}} 
\renewcommand{\P}{\mathbb{P}} 
\newcommand{\inner}[2]{\left\langle#1, #2\right\rangle} 
\newtheorem{defn}{Definition}
\newtheorem{theo}{Theorem}
\newtheorem{prop}{Proposition}
\newtheorem{lem}{Lemma}
\newtheorem{rem}{Remark}
\newtheorem{assu}{Assumption}
\renewcommand{\H}{\mathcal{H}}
\newcommand{\U}{\mathcal{U}}
\newcommand{\G}{\mathcal{G}}
\newcommand{\B}{\mathcal{B}}
\newcommand{\X}{\mathcal{X}}
\newcommand{\Y}{\mathcal{Y}}
\renewcommand{\S}{\mathcal{S}}
\newcommand{\K}{\mathcal{K}}
\title{Koopman Kalman Filter (KKF): An asymptotically optimal nonlinear filtering algorithm with error bounds and its application to parameter estimation}
\author[1]{Diego Olguín}
\author[1]{Axel Osses}
\author[1]{Héctor Ramírez}
\affil[1]{Departamento de Ingeniería Matemática and Centro de Modelamiento Matemático (CNRS IRL2807), Universidad de Chile, Santiago, Chile}
\date{}  
\begin{document}
\maketitle

\begin{abstract}
    In this article, we propose a new filtering algorithm based in the Koopman operator, showing that a nonlinear filtering problem can be seen as an equivalent problem where the dynamics is infinite dimensional, but linear. Using Extended Dynamic Mode Decomposition (EDMD), we create a finite dimensional approximation of the filtering problem of dimension $N$, in state and error covariance matrix, that accomplishes an error bound of order \(O(N^{-1/2})\) in both where $N$ denotes the number of points used in the Koopman approximation. The algorithm is denominated Koopman Kalman Filter (KKF), and has computational complexity \(O(T\cdot N^3)\) in time, and \(O(T \cdot N^2)\) in space, where \(T\) is the number of iterations of the filtering problem. We test the algorithm in linear and nonlinear dynamics cases, showing and effective error bound with respect to the Kalman filter, that corresponds to the optimal solution in the linear case, and equals the error performance of other methods in the state of the art, but with a much lower execution time. Also, we propose a parameter estimation algorithm based in KKF, comparing it with Markov Chain Monte Carlo techniques, showing similar performance with lower execution time. 
\end{abstract}


\section{Introduction}
The filtering problem concerns the estimation of the trajectory of a dynamical system, potentially stochastic, that is typically not fully observable and subject to noisy measurements. This problem arises in diverse areas of engineering and science, including signal processing, astronomy, fluid mechanics, meteorology, medical imaging, and the study of epidemiology and infectious disease dynamics, among others \cite{shumway_time_2025}.

The first mathematical advances on the filtering problem were made independently by Wiener \cite{Weiner1950Extrapolations} and Kolmogorov \cite{Kolmogorov1940StationarySpace}, who proposed general statistical approximations. The first algorithmic solution, the Kalman filter, was introduced by Kalman \cite{Kalman1960AProblems}, providing an exact solution in discrete time for the linear case.

In parallel with Kalman’s work on the linear setting, Stratonovich investigated the general nonlinear case \cite{Stratonovich1965ApplicationSignals}. He showed that the problem reduces to solving a stochastic differential equation whose solution coincides with Kalman’s result in discrete time and with the Kalman–Bucy filter in continuous time \cite{Kalman1961NewTheory}. However, in the most general case, Maurel and Michel \cite{Maurel1984DesFinie} demonstrated that the solutions to the Stratonovich equation admit an infinite-dimensional representation, rendering them intractable for computation and motivating the search for practical approximations.

To address the case of nonlinear dynamics, several methodologies have been developed to obtain suboptimal approximations to the filtering problem. Among the most prominent are the Extended Kalman Filter (EKF) \cite{McElhoe1966AnVenus, Smith1962ApplicationVehicle} and the Unscented Kalman Filter (UKF) \cite{Julier2004UnscentedEstimation}. Although these methods extend the classical Kalman filter, they may, in certain cases, produce unsatisfactory results and lack the asymptotic properties necessary to ensure convergence to the optimal solution.

Another widely adopted strategy is the use of Particle Filters \cite{Hammersley1954PoorCarlo, Liu1998SequentialSystems}, which rely on Monte Carlo methods to approximate the solution while preserving the sequential structure of the problem. Owing to their asymptotic nature, and under appropriate conditions on the system and particle resampling, particle filters exhibit error bounds of order $O(N_p^{-1/2})$, where $N_p$ denotes the number of particles, for the solution of the filtering problem \cite{Crisan2002APractitioners}.

More recently, researchers have investigated the formulation of the filtering problem within Reproducing Kernel Hilbert Spaces (RKHS). These spaces, well known for their applications in statistics, machine learning, and numerical analysis \cite{Wendland2004ScatteredApproximation, Christmann2008SupportMachines}, possess powerful interpolation and approximation properties. Among the most significant contributions in this direction are those of Fukumizu and Song \cite{Fukumizu2004DimensionalitySpaces, Song2009HilbertSystems}, who introduced the Kernel Bayes rule, a generalization of the classical Bayes rule adapted to Hilbert space settings.

On the other hand, data-driven methods applied to dynamical systems have found a powerful framework in the Koopman operator. Originally introduced by Koopman and von Neumann \cite{Koopman1931HamiltonianSpace, Koopman1932DynamicalSpectra}, this operator enables the representation of finite-dimensional nonlinear dynamical systems as infinite-dimensional linear ones. Building on this foundation, the contributions of Mezić \cite{Mezic2013AnalysisOperator}, Schmid \cite{Schmid2008DynamicData}, Surana \cite{Surana2016KoopmanSystems}, Brunton \cite{Brunton2016KoopmanControl}, and others have significantly expanded the use of the Koopman operator, particularly in fields such as fluid mechanics, where rich datasets facilitate the identification of complex dynamics.

In this article, we propose an algorithm that integrates the Koopman operator with the Kalman filter: the Koopman Kalman Filter (KKF). The method achieves an error bound of $O(N^{-1/2})$, where $N$ denotes the number of points used in the Koopman approximation. Although the algorithm is not entirely new and similar formulations have been empirically proposed by other authors \cite{HuangData-DrivenFlight, Netto2018RobustEstimation, Syed2021Koopman-basedXFEL, Wang2022KoopmanSystem, Wang2023Innovation-saturatedOutliers, Yang2025Data-DrivenPredictor}, the novelty of our work lies in deriving the filter rigorously from the classical solution of the filtering problem together with its error bound, and in proving that is competitive with other nonlinear filters in relevant applications and also can be useful in parameter estimation of dynamical systems.

The remainder of this paper is organized as follows. In Section 2, we review the necessary preliminaries for the filtering problem in the linear case, the Kalman filter, and the relevant elements of Reproducing Kernel Hilbert Spaces, following our previous work \cite{olguin2025improvederrorboundskoopman}. Section 3 presents our main results, structured into four parts: (i) the decomposition of the Kalman filter in the linear setting, (ii) the derivation of the filter equations in RKHS, (iii) the establishment of the corresponding error bound, and (iv) an application of the filter to parameter estimation. Section 4 provides numerical results for both filtering and parameter estimation, demonstrating improved precision in filtering compared to existing algorithms, as well as superior computational efficiency in parameter estimation a typically demanding task in terms of computational resources. 

\section{Preliminaries}
\subsection{Filtering problem}

Consider an autonomous and stochastic dynamical and observerd system, which can be 
represented by the following equations:
\[
\begin{aligned}
    \mathbf{x}_{k+1} &= \mathbf{f}\left(\mathbf{x}_k, \mathbf{w}_k\right) \\
    \mathbf{y}_{k} &= \mathbf{g}\left(\mathbf{x}_k, \mathbf{v}_k\right).
\end{aligned}
\]
Here, $\mathbf{f}: \R^n \times \Omega_\mathbf{w} \to \R^n$, $\mathbf{g}: \R^n \times \Omega_\mathbf{v} \to \R^p$ where:
\begin{itemize}
    \item $\mathbf{x}_k$ represents the state of the system at discrete time step 
$k$.
    \item  $\mathbf{w}_k$ is a random variable with support equal to 
$\Omega_\mathbf{w}$, introducing stochasticity into the dynamics. 
    \item  $\mathbf{v}_k$ is a random variable with support equal to 
$\Omega_\mathbf{v}$, introducing stochasticity into the observations.
\end{itemize}

The filtering problem up to a finite horizon $T>0$ can be formulated as an optimization problem of the mean square error of the estimation, conditional on the observations and inputs, given a prior distribution for the initial state $X_0\in\R^n$. So, this is:
\begin{equation*}
	(P) \quad
	\begin{cases}
		\begin{aligned}
			\text{min} \quad & \quad \sum_{k=0}^T \mathbb{E} \left [ (\hat{\mathbf{x}}_{k|k} - \mathbf{x}_k)^\top(\hat{\mathbf{x}}_{k|k} - \mathbf{x}_k) | \mathbf{y}_{1:k}, \, \mathbf{u}_{0:k} \right] \\
			\text{s.t.} \quad & \quad \mathbf{x}_{k+1} = \mathbf{f}( \mathbf{x}_k, \mathbf{w}_k) \\
			\text{} \quad & \quad \mathbf{y}_k = \mathbf{g}(\mathbf{x}_k, \mathbf{v}_k) \\
			\text{} \quad & \quad \mathbf{x}_0 \sim X_0.
		\end{aligned}
	\end{cases}
\end{equation*}
It is then that the problem $(P)$ has as its solution the minimum mean square error estimator. It is known that this problem has a global optimum given by the conditional expectation.
\begin{prop}[\cite{Kalman1960AProblems}]
	The optimum of $(P)$ is given by
	\begin{equation*}
		\hat{\mathbf{x}}_{k|k} = \mathbb{E} \left [ \mathbf{x}_k | \mathbf{y}_{1:k} \right], \, \forall k \in \{ 0, \dots, T \}.
	\end{equation*}
\end{prop}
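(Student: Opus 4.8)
The plan is to exploit the fact that the objective in $(P)$ is a sum of nonnegative terms, one per time index $k$, and that the decision variable $\hat{\mathbf{x}}_{k|k}$ enters only the $k$-th term; hence the problem decouples across $k$ and it suffices to minimize, for each fixed $k$, the conditional cost $\E[\norm{\hat{\mathbf{x}}_{k|k}-\mathbf{x}_k}^2 \mid \mathbf{y}_{1:k}]$ over all $\sigma(\mathbf{y}_{1:k},\mathbf{u}_{0:k})$-measurable, square-integrable estimators $\hat{\mathbf{x}}_{k|k}$. Throughout I assume $\mathbf{x}_k\in L^2$, which is what makes $m_k:=\E[\mathbf{x}_k\mid\mathbf{y}_{1:k}]$ well defined and finite; this integrability is implicit in the formulation of $(P)$.

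For the single-index subproblem I would use the standard orthogonality (bias–variance) decomposition: inserting $\pm m_k$,
\begin{equation*}
    \norm{\hat{\mathbf{x}}_{k|k}-\mathbf{x}_k}^2 = \norm{\hat{\mathbf{x}}_{k|k}-m_k}^2 + 2\inner{\hat{\mathbf{x}}_{k|k}-m_k}{m_k-\mathbf{x}_k} + \norm{m_k-\mathbf{x}_k}^2 .
\end{equation*}
Taking $\E[\,\cdot\mid\mathbf{y}_{1:k}]$, the cross term vanishes: $\hat{\mathbf{x}}_{k|k}-m_k$ is $\sigma(\mathbf{y}_{1:k},\mathbf{u}_{0:k})$-measurable and can be pulled outside the conditional expectation, while $\E[m_k-\mathbf{x}_k\mid\mathbf{y}_{1:k}]=0$ by the defining property of conditional expectation together with the tower rule. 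This yields
\begin{equation*}
    \E\!\left[\norm{\hat{\mathbf{x}}_{k|k}-\mathbf{x}_k}^2\,\middle|\,\mathbf{y}_{1:k}\right] = \norm{\hat{\mathbf{x}}_{k|k}-m_k}^2 + \E\!\left[\norm{m_k-\mathbf{x}_k}^2\,\middle|\,\mathbf{y}_{1:k}\right],
\end{equation*}
where the second summand does not depend on the choice of estimator. Since the first summand is nonnegative and equals zero if and only if $\hat{\mathbf{x}}_{k|k}=m_k$ almost surely, the conditional cost is minimized precisely by $\hat{\mathbf{x}}_{k|k}=\E[\mathbf{x}_k\mid\mathbf{y}_{1:k}]$.

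Finally I would reassemble: since this identification holds for every $k\in\{0,\dots,T\}$ and the terms of the objective in $(P)$ are added with nonnegative weights, the joint objective is minimized by taking each $\hat{\mathbf{x}}_{k|k}=\E[\mathbf{x}_k\mid\mathbf{y}_{1:k}]$, the minimizer being unique up to almost-sure equality. I do not expect a genuine obstacle: the only points requiring care are the precise specification of the admissible class of estimators (measurability with respect to the observed $\sigma$-algebra and $L^2$-integrability) and the vanishing of the cross term, i.e.\ the "take out what is known" and tower properties of conditional expectation. Equivalently, one may phrase the whole argument as the orthogonal projection of $\mathbf{x}_k$ onto the closed subspace $L^2(\sigma(\mathbf{y}_{1:k},\mathbf{u}_{0:k}))$ of $L^2$, whose normal equations $\E[(\mathbf{x}_k-\hat{\mathbf{x}}_{k|k})\,h(\mathbf{y}_{1:k},\mathbf{u}_{0:k})]=0$ for all bounded measurable $h$ characterize $\hat{\mathbf{x}}_{k|k}=\E[\mathbf{x}_k\mid\mathbf{y}_{1:k}]$.
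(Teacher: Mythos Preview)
Your argument is correct and is the standard orthogonal-projection (bias--variance) proof that conditional expectation is the minimum mean-square-error estimator. Note, however, that the paper does not supply its own proof of this proposition: it is stated as a classical result with a citation to Kalman's original work, so there is no in-paper proof to compare against. Your write-up would serve perfectly well as the omitted justification; the only cosmetic point is the paper's slight notational looseness regarding the inputs $\mathbf{u}_{0:k}$ in the conditioning, which you already handle by working with the $\sigma$-algebra $\sigma(\mathbf{y}_{1:k},\mathbf{u}_{0:k})$.
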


\subsection{Linear case and Kalman filter}

To address the general case of dynamical systems, it is instructive to first study the linear case with centered noises and finite second moments, namely
\begin{equation}
\begin{cases}
\mathbf{x}_{k+1} &= \mathbf{A}_k \mathbf{x}_k + \mathbf{w}_k, \\
\mathbf{y}_k &= \mathbf{C}_k \mathbf{x}_k + \mathbf{v}_k,
\end{cases}
\label{eq:lin_disc}
\end{equation}
where $\mathbf{A}_k \in \mathbb{R}^{n \times n}$, $\mathbf{C}_k \in \mathbb{R}^{p \times n}$, $\mathbb{E}[\mathbf{w}_k] = 0$, $\mathrm{Cov}(\mathbf{w}_k) = \mathbf{Q}_k$, $\mathbb{E}[\mathbf{v}_k] = 0$, and $\mathrm{Cov}(\mathbf{v}_k) = \mathbf{R}_k$, with $\mathbf{Q}_k \in \mathbb{R}^{n \times n}$ and $\mathbf{R}_k \in \mathbb{R}^{p \times p}$ covariance matrices.

In this setting, the Kalman filter, introduced by Kalman in \cite{Kalman1960AProblems}, provides the first algorithmic solution to the filtering problem. Its pseudo-code is summarized in Algorithm~\ref{alg:KF} and is important for the one proposed in this article. The method consists of two steps: a \textbf{prediction} step, in which the state is propagated through the system dynamics to generate an a priori estimate, and an \textbf{update} step, in which the observations are used to correct this estimate. Kalman, also in \cite{Kalman1960AProblems} further proved that the algorithm yields the optimal solution to the problem, namely the conditional expectation.

\begin{prop}[\cite{Kalman1960AProblems}]
The Kalman filter produces a sequence $(\hat{\mathbf{x}}_{k|k})$ such that
\begin{equation*}
\hat{\mathbf{x}}_{k|k} = \mathbb{E} \left[ \mathbf{x}_k \middle| \mathbf{y}_{1:k} \right], \quad \forall k \in {0, \dots, T}.
\end{equation*}
\end{prop}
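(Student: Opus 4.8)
The statement is the classical optimality property of the Kalman filter, and the plan is to prove it by induction on $k$, tracking the prediction/update structure of Algorithm~\ref{alg:KF}. One point worth noting at the outset: as written, the identity needs the prior $\mathbf{x}_0$ and the noises $\mathbf{w}_k,\mathbf{v}_k$ to be jointly Gaussian, so that $\mathbb{E}[\mathbf{x}_k\mid\mathbf{y}_{1:k}]$ is an \emph{affine} function of the observations; under only the second-moment hypotheses attached to \eqref{eq:lin_disc} the very same argument instead shows that $\hat{\mathbf{x}}_{k|k}$ is the orthogonal projection of $\mathbf{x}_k$ onto $\mathrm{span}\{1,\mathbf{y}_1,\dots,\mathbf{y}_k\}$ in $L^2$, i.e.\ the best affine (linear minimum mean-square) estimator, which is what the error analysis of Section~3 actually uses. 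For the base case $k=0$ there are no observations ($\mathbf{y}_{1:0}$ empty), so $\mathbb{E}[\mathbf{x}_0\mid\mathbf{y}_{1:0}]=\mathbb{E}[\mathbf{x}_0]$ and $\Cov(\mathbf{x}_0-\mathbb{E}[\mathbf{x}_0])=\Cov(\mathbf{x}_0)$, which are exactly the initial values $\hat{\mathbf{x}}_{0|0},\mathbf{P}_{0|0}$ of the filter.

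\emph{Prediction step.} One assumes inductively that $\hat{\mathbf{x}}_{k|k}=\mathbb{E}[\mathbf{x}_k\mid\mathbf{y}_{1:k}]$ and $\mathbf{P}_{k|k}=\mathbb{E}\big[(\mathbf{x}_k-\hat{\mathbf{x}}_{k|k})(\mathbf{x}_k-\hat{\mathbf{x}}_{k|k})^\top\big]$, and applies $\mathbb{E}[\,\cdot\mid\mathbf{y}_{1:k}]$ to the dynamics $\mathbf{x}_{k+1}=\mathbf{A}_k\mathbf{x}_k+\mathbf{w}_k$. Since $\mathbf{y}_{1:k}$ is a function of $(\mathbf{x}_0,\mathbf{w}_{0:k-1},\mathbf{v}_{1:k})$ only, the standard independence assumptions on the noises make $\mathbf{w}_k$ centered and independent of $\mathbf{y}_{1:k}$, which yields $\mathbb{E}[\mathbf{x}_{k+1}\mid\mathbf{y}_{1:k}]=\mathbf{A}_k\hat{\mathbf{x}}_{k|k}=:\hat{\mathbf{x}}_{k+1|k}$ and, after taking the covariance of the prediction error, $\mathbf{P}_{k+1|k}=\mathbf{A}_k\mathbf{P}_{k|k}\mathbf{A}_k^\top+\mathbf{Q}_k$ — the prediction lines of Algorithm~\ref{alg:KF}.

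\emph{Update step.} The key object is the innovation $\tilde{\mathbf{y}}_{k+1}:=\mathbf{y}_{k+1}-\mathbf{C}_{k+1}\hat{\mathbf{x}}_{k+1|k}=\mathbf{C}_{k+1}(\mathbf{x}_{k+1}-\hat{\mathbf{x}}_{k+1|k})+\mathbf{v}_{k+1}$. Two facts carry the step: (i) $\sigma(\mathbf{y}_{1:k+1})=\sigma(\mathbf{y}_{1:k},\tilde{\mathbf{y}}_{k+1})$, since given $\mathbf{y}_{1:k}$ the map $\mathbf{y}_{k+1}\mapsto\tilde{\mathbf{y}}_{k+1}$ is an invertible affine transformation; and (ii) $\tilde{\mathbf{y}}_{k+1}$ is uncorrelated with $\mathbf{y}_{1:k}$ — hence, under joint Gaussianity, independent of it — because the prediction error $\mathbf{x}_{k+1}-\hat{\mathbf{x}}_{k+1|k}$ is orthogonal to the past observations (inductive step) and so is $\mathbf{v}_{k+1}$. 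Consequently the projection onto $\sigma(\mathbf{y}_{1:k+1})$ splits over the two orthogonal blocks:
\[
\hat{\mathbf{x}}_{k+1|k+1}=\mathbb{E}[\mathbf{x}_{k+1}\mid\mathbf{y}_{1:k}]+\Cov(\mathbf{x}_{k+1},\tilde{\mathbf{y}}_{k+1})\,\Cov(\tilde{\mathbf{y}}_{k+1})^{-1}\tilde{\mathbf{y}}_{k+1}.
\]
Computing $\Cov(\mathbf{x}_{k+1},\tilde{\mathbf{y}}_{k+1})=\mathbf{P}_{k+1|k}\mathbf{C}_{k+1}^\top$ and $\Cov(\tilde{\mathbf{y}}_{k+1})=\mathbf{C}_{k+1}\mathbf{P}_{k+1|k}\mathbf{C}_{k+1}^\top+\mathbf{R}_{k+1}$ identifies the multiplier with the Kalman gain $\mathbf{K}_{k+1}$ of Algorithm~\ref{alg:KF}; and from $\mathbf{x}_{k+1}-\hat{\mathbf{x}}_{k+1|k+1}=(\mathbf{I}-\mathbf{K}_{k+1}\mathbf{C}_{k+1})(\mathbf{x}_{k+1}-\hat{\mathbf{x}}_{k+1|k})-\mathbf{K}_{k+1}\mathbf{v}_{k+1}$ a direct covariance computation gives $\mathbf{P}_{k+1|k+1}=(\mathbf{I}-\mathbf{K}_{k+1}\mathbf{C}_{k+1})\mathbf{P}_{k+1|k}$, closing the induction.

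The main obstacle is concentrated in facts (i)–(ii) of the update step: that conditioning on the whole observation record reduces to jointly conditioning on the previous record and the current innovation, together with independence of the innovation from the past. This is exactly where joint Gaussianity enters — it propagates along the induction because affine images of jointly Gaussian vectors are jointly Gaussian — and it is the only point that must be re-read as a statement about $L^2$-orthogonal projections in the weaker, second-moment-only setting. Everything else (the base case and the algebra identifying $\mathbf{A}_k\hat{\mathbf{x}}_{k|k}$, the gain $\mathbf{K}_{k+1}$, and the Riccati-type covariance recursion) is routine manipulation using linearity of expectation and bilinearity of covariance.
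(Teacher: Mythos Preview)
The paper does not actually prove this proposition: it is stated as a classical result and attributed to Kalman's original 1960 paper via the citation \cite{Kalman1960AProblems}, with no proof given in the text. So there is nothing to compare your argument against on the paper's side.

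That said, your proposal is the standard innovations-based proof and is correct. You also identify the one genuine subtlety that the paper glosses over: as stated, the identity $\hat{\mathbf{x}}_{k|k}=\mathbb{E}[\mathbf{x}_k\mid\mathbf{y}_{1:k}]$ requires joint Gaussianity of $\mathbf{x}_0,\mathbf{w}_k,\mathbf{v}_k$, whereas under the second-moment hypotheses actually stated around \eqref{eq:lin_disc} the Kalman recursion only delivers the best \emph{affine} estimator (the $L^2$-projection onto $\mathrm{span}\{1,\mathbf{y}_1,\dots,\mathbf{y}_k\}$). Your remark that this weaker reading is what the error analysis in Section~3 really needs is accurate --- the paper's own development of the KKF (Propositions~\ref{prop:pred_step} and~\ref{prop:update_step}) proceeds exactly in this $L^2$-projection/minimum-MSE spirit and never invokes Gaussianity.
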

\begin{algorithm}
	\caption{Kalman Filter}\label{alg:KF}
	\begin{algorithmic}[1]
		\State \textbf{Input:} Discrete dynamics as in (\ref{eq:lin_disc}), $\mathbf{x}_0$ prior on the initial condition, $\mathbf{y}_{1:T}$ observations, $\mathbf{u}_{0:T}$ inputs.
		\State \textbf{Output:} $(\hat{\mathbf{x}}_{k|k})_{k=0}^{T}$ trajectory estimator and $(\hat{\mathbf{P}}_{k|k})_{k=0}^{T}$ covariance matrices.
		\State $\hat{\mathbf{x}}_{0|0} \gets \mathbb{E} [\mathbf{x}_0]$
		\State $\mathbf{P}_{0|0} \gets \mathbb{E} [(\mathbf{x}_0 - \hat{\mathbf{x}}_{0})(\mathbf{x}_0 - \hat{\mathbf{x}}_{0})^\top]$
		\For{$k = 0, \dots, T-1$}
		\State $\hat{\mathbf{x}}_{k+1|k} \gets \mathbf{A}_k \mathbf{x}_{k|k}$
		\Comment{A priori estimation}
		\State $\mathbf{P}_{k+1|k} \gets \mathbf{A}_k \mathbf{P}_{k|k} \mathbf{A}_k^\top + \mathbf{Q}_k$
		\Comment{A priori error covariance}
		\State $\hat{\mathbf{y}}_{k+1|k} \gets \mathbf{C}_{k+1} \hat{\mathbf{x}}_{k+1|k}$
		\Comment{A priori observation estimation}
		\State $\mathbf{e}_{\mathbf{y}_{k+1|k}} \gets \mathbf{y}_{k+1} - \hat{\mathbf{y}}_{k+1|k}$
		\Comment{A priori error (innovation)}
		\State $\mathbf{K}_{k+1} \gets \mathbf{P}_{k+1|k} \mathbf{C}^T_{k+1} (\mathbf{C}_{k+1} \mathbf{P}_{k|k} \mathbf{C}^T_{k+1}+ \mathbf{R}_{k+1})^{-1}$
		\Comment{Kalman gain}
		\State $\hat{\mathbf{x}}_{k+1|k+1} \gets \hat{\mathbf{x}}_{k+1|k} + \mathbf{K}_{k+1} \mathbf{e}_{\mathbf{y}_{k+1|k}}$
		\Comment{A posteriori estimation}
		\State $\mathbf{P}_{k+1|k+1} \gets (\mathbf{I} - \mathbf{K}_{k+1} \mathbf{C}_{k+1}) \mathbf{P}_{k+1|k}$
		\Comment{A posteriori error covariance}
		\EndFor
	\end{algorithmic}
\end{algorithm}

\subsection{Setting for Koopman operator approximation}

In this subsection we present the framework and notation for the Koopman operator approximation, that is the most important tool in this work. Similar settings can be found in literature, like \cite{Philipp2023ErrorFramework} and \cite{Philipp2024ErrorOperator}, but the most important for this case is \cite{olguin2025improvederrorboundskoopman}, where an error bound for the approximation is stablished and will be used later. 

First, consider the Borel $\sigma$-algebras over $\R^n$ and $\R^p$, denoted by $\B_{\R^n}$ and $\B_{\R^p}$, respectively, together with measures that model the transition from a point to a set, referred to as \textit{transition measures}.  

\begin{defn}[Transition measures]  
The \textit{transition measures} are defined as the probability measures  
\(\rho_\mathbf{f}: \R^n \times \B_{\R^n} \to [0, 1], \,  
\rho_\mathbf{g}: \R^n \times \B_{\R^p} \to [0, 1],\) acting as  
\[
\rho_\mathbf{f}(\mathbf{x}, \mathcal{A}_1) = \P \bigl(\mathbf{f}(\mathbf{x}, \cdot) \in \mathcal{A}_1 \bigr),  
\qquad  
\rho_\mathbf{g}(\mathbf{x}, \mathcal{A}_2) = \P \bigl(\mathbf{g}(\mathbf{x}, \cdot) \in \mathcal{A}_2 \bigr).
\]  
We adopt the shorthand notation  
\[
\rho_\mathbf{f}(\mathbf{x}, dx') = d\rho_\mathbf{f}(\mathbf{x}, \cdot)(x'),  
\qquad  
\rho_\mathbf{g}(\mathbf{x}, dy') = d\rho_\mathbf{g}(\mathbf{x}, \cdot)(y').  
\]  
\end{defn}  

An important object in this work, is the concept of a pair of invariant spaces for the system, that are sets where the dynamical system remains in the next unit of time with probability 1.

\begin{defn}[Invariant spaces]  
A pair $(\X, \Y)$ with $\X \subseteq \R^n$ and $\Y \subseteq \R^p$ is called a pair of \textit{invariant spaces} for the observed dynamical system if  
\[
\rho_\mathbf{f}(\mathbf{x}, \X) = 1, \, \rho_\mathbf{g}(\mathbf{x}, \Y) = 1, \qquad \forall \mathbf{x} \in \X.  
\]  
\end{defn} 

Before defining the Koopman operator, we need an appropriate functional space in which to work. In our case, we shall employ Reproducing Kernel Hilbert Spaces (RKHS), that have been relevant in the literature of Koopman operator in recent times and where the error bounds appear with this setting. 

\begin{defn}[Reproducing Kernel Hilbert Space (RKHS) \cite{Mercer1909XVI.Equations}]  
A Hilbert space \( \H \subset \mathbb{C}^{\X} \) is called an RKHS if there exists a function \( k: \X \times \X \to \mathbb{C} \), called the \textit{reproducing kernel}, such that:  
\begin{enumerate}
    \item \( k_p \equiv k(\cdot, p) \in \H, \ \forall p \in \X. \)
    \item \( f(p) = \langle f, k_p \rangle_{\H}, \ \forall p \in \X, \ \forall f \in \H. \)
\end{enumerate}
\end{defn}  

The second property, known as the \textit{reproducing property}, is fundamental in RKHS theory and gives rise to many important results, as will be discussed below. In the sequel, we denote by $\H$ an RKHS associated with a kernel $k$.  

\begin{theo}  
A function \( k: \X \times \X \to \mathbb{C} \) is a \textit{reproducing kernel} of a Hilbert space $\H$ if and only if there exists a function \( \Phi: \X \to \ell^2 (\X) \) such that, for all \( (x, y) \in \X \times \X \),  
\[
k(x, y) = \langle \Phi(x), \Phi(y) \rangle_{\ell^2(\X)}.
\]  
Moreover, the space \( \ell^2(\X) \) is isometric to $\H$ via an isometry \( T: \H \to \ell^2(\X) \), and the function \( \Phi \) is given by \( \Phi(x) = T(k(\cdot, x)) \). This function \( \Phi \) is called the \textit{feature map} associated with \( k \).  
\end{theo}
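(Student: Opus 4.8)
The plan is to prove the stated equivalence by establishing the two implications separately, and then to read off the ``moreover'' assertion directly from the construction used in the forward direction. The single fact I would use repeatedly is that in any RKHS $\H$ with kernel $k$ the family $\{k_p := k(\cdot,p) : p \in \X\}$ has dense linear span: if $f \in \H$ is orthogonal to every $k_p$, then $f(p) = \langle f, k_p\rangle_\H = 0$ for all $p$, so $f = 0$. This both pins $\H$ down up to isometry and, crucially, forces its Hilbert dimension to be at most $\abs{\X}$.

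\emph{Forward direction.} Assuming $k$ is the reproducing kernel of some $\H$, I would fix an orthonormal basis $(e_j)_{j\in J}$ of $\H$ and, using $\abs{J}\le\abs{\X}$, an injection $\iota : J \hookrightarrow \X$. Define $T : \H \to \ell^2(\X)$ by sending $f$ to the square-summable family of its Fourier coefficients $(\langle f, e_{\iota^{-1}(x)}\rangle_\H)_{x \in \iota(J)}$, extended by zero on $\X\setminus\iota(J)$; Parseval's identity makes $T$ a linear isometry of $\H$ onto a closed subspace of $\ell^2(\X)$. Then I would set $\Phi(x) := T(k(\cdot,x))$ and verify, using that $T$ preserves inner products together with the reproducing property, that $\langle \Phi(x), \Phi(y)\rangle_{\ell^2(\X)} = \langle k_x, k_y\rangle_\H = k(x,y)$ for all $x,y$ (for complex-valued $k$ this holds with the appropriate convention on which argument the inner product is conjugate-linear in; for real-valued $k$, the case relevant to this paper, there is nothing to track). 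This step simultaneously yields the factorization and exhibits the isometry $T$ and the formula $\Phi(x) = T(k(\cdot,x))$ demanded by the statement.

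\emph{Reverse direction.} Given $\Phi : \X \to \ell^2(\X)$ with $k(x,y) = \langle\Phi(x),\Phi(y)\rangle_{\ell^2(\X)}$, I would construct the candidate space directly rather than via a completion. Put $\mathcal{W} := \overline{\mathrm{span}}\{\Phi(x) : x\in\X\}$, a closed (hence complete) subspace of $\ell^2(\X)$, and to each $w\in\mathcal{W}$ associate the function $f_w(x) := \langle w, \Phi(x)\rangle_{\ell^2(\X)}$ on $\X$; set $\H := \{f_w : w\in\mathcal{W}\}$. I would check that $w\mapsto f_w$ is injective on $\mathcal{W}$ (if $f_w\equiv 0$ then $w\perp\Phi(x)$ for all $x$, hence $w\perp\mathcal{W}$, hence $w=0$), which lets me transport the inner product by $\langle f_w, f_{w'}\rangle_\H := \langle w, w'\rangle_{\ell^2(\X)}$, making $\H$ a Hilbert space isometric to $\mathcal{W}$. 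Finally I would note $k(\cdot,y) = f_{\Phi(y)}\in\H$ since $f_{\Phi(y)}(x) = \langle\Phi(y),\Phi(x)\rangle = k(x,y)$, and that $\langle f_w, k(\cdot,y)\rangle_\H = \langle w, \Phi(y)\rangle_{\ell^2(\X)} = f_w(y)$, so $k$ is the reproducing kernel of $\H$.

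\emph{Main obstacle.} The only point needing genuine care is the cardinality bookkeeping that makes the target space exactly $\ell^2(\X)$: one has to argue via the dense family $\{k_p\}$ that $\dim\H\le\abs{\X}$ so that an orthonormal basis of $\H$ can be re-indexed inside $\X$, and one should read ``$\ell^2(\X)$ is isometric to $\H$'' as ``$T$ is an isometric embedding of $\H$ into $\ell^2(\X)$'', which is onto precisely when the two Hilbert dimensions coincide. Apart from that and the complex-conjugation convention, the remaining work is a routine verification of well-definedness and of the reproducing identity on a spanning set, so I do not anticipate serious difficulty.
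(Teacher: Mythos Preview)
The paper does not actually prove this theorem: it is stated in the preliminaries as a standard fact from RKHS theory (in the spirit of the Moore--Aronszajn and Mercer constructions) and is used thereafter without argument. There is therefore no ``paper's own proof'' to compare against.

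Your proposal is a correct and standard proof. The forward direction uses the totality of $\{k_p\}$ to bound the Hilbert dimension of $\H$ by $\abs{\X}$ and then transports an orthonormal basis into $\ell^2(\X)$; the reverse direction is the usual pull-back construction identifying $\H$ with the closed span of the $\Phi(x)$'s. Both directions are fine, and the verification that $\langle \Phi(x),\Phi(y)\rangle = \langle k_x,k_y\rangle_\H = k(x,y)$ is exactly the intended computation.

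You have also correctly flagged the two soft spots in the \emph{statement} itself. First, ``$\ell^2(\X)$ is isometric to $\H$'' should indeed be read as ``$T$ is an isometric embedding of $\H$ into $\ell^2(\X)$''; surjectivity holds only when the Hilbert dimension of $\H$ equals $\abs{\X}$. Second, in the genuinely complex case the reproducing property gives $k(x,y)=\langle k_y,k_x\rangle_\H$, so the formula $k(x,y)=\langle \Phi(x),\Phi(y)\rangle$ with $\Phi(x)=T(k_x)$ is only literally correct under a specific conjugation convention; since the rest of the paper works with real-valued kernels this is harmless, and your remark to that effect is appropriate.
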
 

We consider \( (\X, \Y) \) a pair of invariant spaces of the observed dynamical system.  
Let \( \B_\X \) denote the trace \( \sigma \)-algebra of \( \B_{\R^n} \) on \( \X \), and \( \B_\Y \) the trace of \( \B_{\R^p} \) on \( \Y \).  
We define \( \mu_\X: \B_\X \to [0,1] \) as a probability measure representing a random variable \( X \), from which the data are sampled. Similarly, let \( \mu_\Y : \B_\Y \to [0,1] \) be a probability measure on \( \Y \).  

We denote by \( X^+ \) the random variable that, given \( X = \mathbf{x} \), is distributed according to the measure \( \rho_\mathbf{f}(\mathbf{x}, \cdot) \).  
Likewise, we denote by \( Y \) the random variable that, given \( X = \mathbf{x} \), is distributed according to \( \rho_\mathbf{g}(\mathbf{x}, \cdot) \). Finally, let \( \Phi_\X \) denote the canonical feature map associated with a kernel \( k_\X : \X \times \X \to \R \), i.e., \(\Phi_\X(x) = k_\X(x, \cdot)\).

Now arise the first operators, the covariance operators, which are the best way to define the Koopman operator in the context of RKHS, because they allow us to understand the power of the operator to linearize the problem, which is the main advantage of this tool. First, we introduce the Kronecker product as a rank-one operator acting as a multiplicative operator.

\begin{defn}[Kronecker Product]
The Kronecker product in an RKHS is defined as the following rank-one operator for fixed \(x, x' \in \X\):
\[
    \Phi_\X (x) \otimes \Phi_\X(x') : \H_\X \to \H_\X,
\]
\[
    [\Phi_\X (x) \otimes \Phi_\X(x')] \psi = \langle \psi, \Phi_\X (x') \rangle_{\H_\X} \, \Phi_\X (x) 
    = \psi (x') \Phi_\X (x).
\]
More generally, it can be defined between two RKHS. For \(x \in \X\) and \(y \in \Y\),
\[
    \Phi_\X (x) \otimes \Phi_\Y(y) : \H_\Y \to \H_\X,
\]
\[
    [\Phi_\X (x) \otimes \Phi_\Y(y)] \psi 
    = \langle \psi, \Phi_\Y (y) \rangle_{\H_\Y} \, \Phi_\X (x) 
    = \psi (y) \Phi_\X (x).
\]
\end{defn}

From this, the covariance operators follow, and we present them as in \cite{Song2009HilbertSystems}, where we define the covariance operator between a distribution \(X\) and itself, called the auto-covariance, and between two different distributions \(X\) and \(X^+\), and \(X\) and \(Y\), which are called cross-covariances.

\begin{defn}[Covariance operators]
The autocovariance operator is the map \( C_X: \H_\X \to \H_\X \) defined by
\[
    C_{X} = \E[\Phi_\X (X) \otimes \Phi_\X (X)] 
    = \int_{\X} \Phi_\X (x) \otimes \Phi_\X (x) \, d \mu_{\X} (x).
\]
And the cross-covariance operator associated with the random variables \(X\) and \(X^+\) is defined as 
\(C_{XX^+}: \H_\X \to \H_\X\),
\[
    C_{XX^+} = \E \big[ \Phi_\X (X) \otimes \Phi_\X (X^+) \big] 
    = \int_{\X} \int_{\X} \Phi_\X(x) \otimes \Phi_\X(x') \, \rho_\mathbf{f} (x, dx') \, d \mu_{\X} (x).
\]
Similarly, the operator associated with \(X\) and \(Y\) is defined as
\[
    C_{XY} = \E \big[ \Phi_\X (X) \otimes \Phi_\Y (Y) \big] 
    = \int_{\X} \int_{\Y} \Phi_\X (x) \otimes \Phi_\Y (y) \, \rho_\mathbf{g} (x, dy) \, d \mu_{\X} (x).
\]
\end{defn}

A fundamental tool, closely related to the Koopman operator, is the \textit{conditional embedding operator}. 
It represents transitions in the state space by embedding conditional expectations into the RKHS, but in a linear manner with respect to the \textit{feature map}. 
This is the key idea underlying the Koopman operator and will be crucial for the algorithm presented next.

\begin{defn}[Conditional Embedding Operator]
The conditional embedding operators between \(X\) and \(X^+\), and between \(X\) and \(Y\), are the maps \(C_{X^+|X} : \H_\X \to \H_\X, \, C_{Y|X} : \H_\X \to \H_\Y,\)defined by the following conditions:
\begin{enumerate}
    \item For each \(x \in \X\),
    \[
        \mu_{X^+|x} = \E_{X^+|X}\!\left[\Phi_\X(X^+)\,\middle|\,X=x\right] 
        = C_{X^+|X} \Phi_\X (x),
    \]
    \[
        \mu_{Y|x} = \E_{Y|X}\!\left[\Phi_\Y(Y)\,\middle|\,X=x\right] 
        = C_{Y|X} \Phi_\X (x).
    \]
    \item For all \(h_\X \in \H_\X\) and \(h_\Y \in \H_\Y\),
    \[
        \E_{X^+|X}[h_\X(X^+)\,|\,X=x] 
        = \langle h_\X, \mu_{X^+|x} \rangle_{\H_\X}, 
        \qquad
        \E_{Y|X}[h_\Y(Y)\,|\,X=x] 
        = \langle h_\Y, \mu_{Y|x} \rangle_{\H_\Y}.
    \]
\end{enumerate}
\end{defn}

A condition for the existence of these operators was established by Fukumizu et al. and Song et al. \cite{Fukumizu2004DimensionalitySpaces, Song2009HilbertSystems}, 
and it is satisfied by a broad family of RKHSs, for instance, those associated with universal kernels.

\begin{theo}
Assume that 
\(\E[h_\X(X^+)\,|\,X=\cdot] \in \H_\X\) for all \(h_\X \in \H_\X\) 
and 
\(\E[h_\Y(Y)\,|\,X=\cdot] \in \H_\Y\) for all \(h_\Y \in \H_\Y\).  
Then the conditional embedding operators satisfy
\[
    C_{X^+|X} = C_{X^+X} \, C_X^{-1}, 
    \qquad 
    C_{Y|X} = C_{YX} \, C_X^{-1}.
\]
\end{theo}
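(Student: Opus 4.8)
The plan is to reduce both identities to one use of the tower property of conditional expectation, and to prove them first in the ``multiplied-out'' form $C_{X^+|X}\,C_X=C_{X^+X}$ and $C_{Y|X}\,C_X=C_{YX}$, which involve no inversion, and only afterwards to divide by $C_X$. The two cases are completely parallel, so I would carry out the $X^+$ case and then indicate the substitutions. Throughout I use that $C_{X^+|X}$ and $C_{Y|X}$ are the bounded operators furnished by the Conditional Embedding Operator definition — whose existence is exactly what the stated hypothesis guarantees, via the cited results of Fukumizu and Song — and that $C_{X^+X}$ (resp.\ $C_{YX}$) denotes the covariance operator built with the factors in the order $X^+,X$ (resp.\ $Y,X$), so $C_{X^+X}=C_{XX^+}^{*}$ and $C_{YX}=C_{XY}^{*}$ in the real Hilbert space setting.

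The key computation uses the rank-one action $[\Phi_\X(x)\otimes\Phi_\X(x')]\psi=\psi(x')\Phi_\X(x)$, which gives the Bochner-integral representations $C_X\psi=\E[\psi(X)\Phi_\X(X)]$ and $C_{X^+X}\psi=\E[\psi(X)\Phi_\X(X^+)]$ for $\psi\in\H_\X$. Applying the bounded operator $C_{X^+|X}$ inside the integral and then the two defining properties of the conditional embedding operator ($C_{X^+|X}\Phi_\X(x)=\mu_{X^+|x}$ and $\langle\mu_{X^+|x},\phi\rangle_{\H_\X}=\E[\phi(X^+)\mid X=x]$) yields, for every $\psi,\phi\in\H_\X$,
\[
\langle C_{X^+|X}C_X\psi,\phi\rangle_{\H_\X}
=\E\big[\psi(X)\,\langle \mu_{X^+|X},\phi\rangle_{\H_\X}\big]
=\E\big[\psi(X)\,\E[\phi(X^+)\mid X]\big].
\]
By the tower property the right-hand side equals $\E[\psi(X)\phi(X^+)]=\langle C_{X^+X}\psi,\phi\rangle_{\H_\X}$, and since $\psi,\phi$ are arbitrary this gives $C_{X^+|X}C_X=C_{X^+X}$. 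Repeating the argument verbatim with $Y$, $\H_\Y$, $\mu_{Y|x}$, $C_{YX}$ in place of $X^+$, $\H_\X$, $\mu_{X^+|x}$, $C_{X^+X}$ gives $C_{Y|X}C_X=C_{YX}$.

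The step I expect to be the only genuine obstacle is making sense of, and passing to, $C_{X^+|X}=C_{X^+X}C_X^{-1}$. The autocovariance $C_X$ is positive and trace-class, hence has no bounded inverse, so $C_{X^+X}C_X^{-1}$ must be read as the operator $\phi\mapsto C_{X^+X}(C_X^{-1}\phi)$ with domain $\operatorname{Range}(C_X)$; this requires $C_X$ to be injective, a standard assumption (satisfied, e.g., when $\operatorname{supp}\mu_\X=\X$ and the kernel is continuous), which I would state explicitly. On the dense domain $\operatorname{Range}(C_X)$, writing $\phi=C_X\psi$ and using the mild form gives $C_{X^+X}C_X^{-1}\phi=C_{X^+X}\psi=C_{X^+|X}C_X\psi=C_{X^+|X}\phi$, so $C_{X^+X}C_X^{-1}$ is precisely the restriction of the bounded operator $C_{X^+|X}$ to a dense subspace, and therefore extends uniquely to $C_{X^+|X}$ — which is the asserted identity; equivalently one may phrase this through the regularized limit $\lim_{\lambda\downarrow 0}C_{X^+X}(C_X+\lambda I)^{-1}=C_{X^+|X}$, whose convergence is again what the hypothesis on $\E[h_\X(X^+)\mid X=\cdot]$ buys. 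The identity $C_{Y|X}=C_{YX}C_X^{-1}$ follows in the same way from $C_{Y|X}C_X=C_{YX}$ using the companion hypothesis on $\E[h_\Y(Y)\mid X=\cdot]$.
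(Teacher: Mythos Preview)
The paper does not give its own proof of this theorem: it is stated as a cited result from Fukumizu et al.\ and Song et al., with no argument in the text. Your proposal supplies what is essentially the standard proof from those references---prove the ``multiplied-out'' identity $C_{X^+|X}C_X=C_{X^+X}$ via the tower property, then pass to $C_{X^+X}C_X^{-1}$ on $\operatorname{Range}(C_X)$ and extend by density---and it is correct. Your care about the unbounded inverse (injectivity of $C_X$, interpretation on a dense domain, or equivalently the regularized limit) is exactly the point that makes this a theorem rather than a formal manipulation, and is worth keeping explicit.
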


With all the previous tools, we can now define the Koopman operator for autonomous and stochastic systems, in line with other works (see, e.g., \cite{Philipp2023ErrorFramework}).

\begin{defn}[Koopman Operators] 
The Koopman operator associated with the dynamics is the linear operator 
\(\U : \C^\X \to \C^\X \) defined by
\[
    (\U \psi)(\mathbf{x}) 
    = \E\!\left[\psi(\mathbf{f}(\mathbf{x},\cdot))\right] 
    = \int_\X \psi(x') \, \rho_\mathbf{f}(\mathbf{x}, dx').
\]
Similarly, the Koopman operator associated with the observation is 
\(\G : \C^\Y \to \C^\X\), defined by
\[
    (\G \psi)(\mathbf{x}) 
    = \E\!\left[\psi(\mathbf{g}(\mathbf{x},\cdot))\right] 
    = \int_\Y \psi(y) \, \rho_\mathbf{g}(\mathbf{x}, dy).
\]
\end{defn}

At this point, the relationship between the Koopman operator and the covariance operators is not immediately apparent. 
However, the following proposition bridges these concepts, showing in particular that the Koopman operator is the adjoint of the previously introduced Conditional Embedding Operator.

\begin{prop}\label{prop:covariance_koopman}
    Suppose that \(\U \H_\X \subseteq \H_\X\) and \(\G \H_\Y \subseteq \H_\X\).  
    Then the conditional embedding operators satisfy
    \[
        C_{X^+|X} = \U^*, 
        \qquad 
        C_{Y|X} = \G^*,
    \]
    which implies
    \[
        \U = C_X^{-1} C_{XX^+}, 
        \qquad 
        \G = C_X^{-1} C_{XY}.
    \]
\end{prop}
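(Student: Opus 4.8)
The plan is to first establish the two adjoint identities $C_{X^+|X} = \U^{*}$ and $C_{Y|X} = \G^{*}$ directly from the defining properties of the Koopman and conditional embedding operators, and then to read off the two explicit formulas by combining these with the factorizations $C_{X^+|X} = C_{X^+X}\,C_X^{-1}$ and $C_{Y|X} = C_{YX}\,C_X^{-1}$ from the preceding theorem.

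For the first identity, I would fix $h \in \H_\X$ and $x \in \X$ and evaluate $(\U h)(x)$ in two ways. On one hand, since $X^+ \mid X = x$ has law $\rho_\mathbf{f}(x,\cdot)$, the definition of $\U$ gives $(\U h)(x) = \int_\X h(x')\,\rho_\mathbf{f}(x,dx') = \E_{X^+|X}[\,h(X^+)\mid X = x\,]$, and properties~2 and~1 of the conditional embedding operator rewrite this as $\langle h,\mu_{X^+|x}\rangle_{\H_\X} = \langle h, C_{X^+|X}\Phi_\X(x)\rangle_{\H_\X}$. On the other hand, $\U h \in \H_\X$ by hypothesis, so the reproducing property gives $(\U h)(x) = \langle \U h,\Phi_\X(x)\rangle_{\H_\X}$. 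Equating the two expressions yields
\[
  \langle \U h,\Phi_\X(x)\rangle_{\H_\X} = \langle h, C_{X^+|X}\Phi_\X(x)\rangle_{\H_\X},\qquad \forall\, h\in\H_\X,\ \forall\, x\in\X .
\]
Because $\mathrm{span}\{\Phi_\X(x):x\in\X\}$ is dense in $\H_\X$ (a vector orthogonal to every $\Phi_\X(x)$ vanishes pointwise by the reproducing property, hence is $0$) and $C_{X^+|X}$ is bounded, this extends by linearity and continuity to $\langle \U h, g\rangle_{\H_\X} = \langle h, C_{X^+|X}g\rangle_{\H_\X}$ for all $h,g\in\H_\X$, i.e.\ $\U h = C_{X^+|X}^{*}h$ for every $h \in \H_\X$. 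Hence $\U|_{\H_\X}$ is bounded and, taking adjoints once more, $C_{X^+|X} = (\U|_{\H_\X})^{*} = \U^{*}$. The identity $C_{Y|X} = \G^{*}$ is obtained by the identical computation with $X^+,\rho_\mathbf{f}$ replaced by $Y,\rho_\mathbf{g}$, now using the hypothesis $\G\H_\Y\subseteq\H_\X$ so that $\G h \in \H_\X$ for $h\in\H_\Y$.

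To conclude, I would substitute $\U^{*} = C_{X^+|X}$ and $\G^{*} = C_{Y|X}$ into the preceding theorem and then right-multiply by $C_X$, using $C_X^{-1}C_X = I$ on $\H_\X$ (valid since $C_X$ is injective), to get $\U^{*}C_X = C_{X^+X}$ and $\G^{*}C_X = C_{YX}$, which are identities between bounded, everywhere-defined operators. Taking adjoints of these, and using that $C_X$ is self-adjoint together with $C_{X^+X}^{*} = C_{XX^+}$ and $C_{YX}^{*} = C_{XY}$ (immediate from $(u\otimes v)^{*} = v\otimes u$ applied under the Bochner integrals defining the covariance operators), gives $C_X\U = C_{XX^+}$ and $C_X\G = C_{XY}$, whence $\U = C_X^{-1}C_{XX^+}$ and $\G = C_X^{-1}C_{XY}$. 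The step demanding the most care is the handling of $C_X^{-1}$, which is unbounded in infinite dimensions: each product must be read on its correct domain, and the point of right-multiplying by $C_X$ before taking adjoints is precisely to trade the delicate factor $C_{X^+X}C_X^{-1}$ for the harmless $C_{X^+X}$, the well-posedness being underwritten by the hypothesis $\U\H_\X\subseteq\H_\X$ (equivalently, by the boundedness of the conditional embedding operators supplied by the existence theorem). Everything else is routine bookkeeping with the reproducing property and the definitions of the Kronecker-product and covariance operators.
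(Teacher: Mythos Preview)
Your proposal is correct and follows the standard route: evaluate $(\U h)(x)$ via the Koopman definition and via the reproducing property, match against the defining properties of the conditional embedding operator, and use density of $\{\Phi_\X(x)\}$ to conclude $\U = C_{X^+|X}^{*}$; then take adjoints of the factorization $C_{X^+|X} = C_{X^+X}C_X^{-1}$ to obtain $\U = C_X^{-1}C_{XX^+}$. The paper itself does not spell out a proof here---it simply refers to Proposition~3 of the authors' earlier work \cite{olguin2025improvederrorboundskoopman} for the identity $C_{X^+|X}=\U^{*}$ and declares the $\G$ case analogous---so your argument is in fact more detailed than what the paper records, and almost certainly matches the cited argument in substance.
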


\begin{proof}[Proof of Proposition \ref{prop:covariance_koopman}]
    This result is similar to Proposition 3 in \cite{olguin2025improvederrorboundskoopman}, 
where it was already proven that \(C_{X^+|X} = \U^*\); the case \(C_{Y|X} = \G^*\) follows analogously.
\end{proof}

\section{Main results}
We divide the theoretical results into four parts. First, we propose a decomposition of the error of the Kalman filter applied to two linear systems, expressing this error in terms of the data generated by the systems. To the best of our knowledge, this analysis has not previously been documented in the literature and therefore constitutes an original contribution of this work.  
Second, we rigorously derive the proposed filter, referred to as the \textit{Koopman Kalman Filter} (KKF), following a methodology analogous to that employed in the Kalman filter for linear systems with recursive least squares \cite{Kalman1960AProblems, Triantafyllopoulos2021BayesianBeyond}, and in particular, the formulation presented in \cite{Gebhard2019} for the construction of the so called \textit{kernel Kalman rule}.  
Third, we establish an error bound for the approximated solution of the filter constructed via Extended Dynamic Mode Decomposition (EDMD), measured with respect to the filter in infinite dimensions. Finally, we present a methodology for parameter estimation based in the new filter.

\subsection{Kalman error decomposition}

The objective of this subsection is to analyze the error between two Kalman rules, thereby quantifying the discrepancy between an approximate and an exact rule, both of which will be formally defined later in this section. To this end, we assume a finite time horizon $T$, which avoids concerns regarding the behavior of constants in infinite time horizon.  

We consider two dynamical systems observed in a Hilbert space, with state spaces $E_x$ and observation spaces $E_y$, described by
\begin{equation*}
	\begin{aligned}
		\mu_{i,k}  &= A_{i,k} \mu_{i,k-1} + \nu_{i,k}, \\
		y_{i,k} &= C_{i,k} \mu_{i,k} + \xi_{i,k},
	\end{aligned}
\end{equation*}
where $A_{i,k} : E_x \to E_x$ and $C_{i,k}: E_x \to E_y$ are bounded linear operators, while $\nu_{i,k} \in E_x$ and $\xi_{i,k} \in E_y$ are random variables with finite second moments and bounded covariance operators $\mathcal{Q}_{i,k}$ and $\mathcal{R}_{i,k}$, respectively, for $i \in \{1,2\}$ and $k \geq 1$.  

Each system is associated with a Kalman rule, defined by
\begin{equation*}
	\begin{aligned}
        \hat{\mu}_{i,k+1}^- &= A_{i,k} \hat{\mu}_{i,k}, 
        & \mathcal{P}_{i,k}^- &= A_{i,k} \mathcal{P}_{i,k-1}^+ A_{i,k}^* + \mathcal{Q}_{i,k}, \\
		\S_{i,k} &= C_{i,k} \mathcal{P}_{i,k}^- C_{i,k}^* + \mathcal{R}_{i,k}, 
		& \K_{i,k} &= \mathcal{P}_{i,k}^- C_{i,k}^* \S_{i,k}^{-1}, \\
		\mathcal{P}_{i,k}^+ &= (I - \K_{i,k} C_{i,k}) \mathcal{P}_{i,k}^-, 
		& \hat{\mu}_{i,k} &= A_{i,k} \hat{\mu}_{i,k-1} + \K_{i,k} (y_{i,k} - C_{i,k} \hat{\mu}_{i,k}^-),
	\end{aligned}
\end{equation*}
for $i \in \{1,2\}$ and $k \geq 1$. Here, $\mathcal{P}_{i,k}^-$ and $\mathcal{P}_{i,k}^+$ denote the a priori and a posteriori error covariance operators, respectively, and $\K_{i,k}$ is the Kalman gain operator, all defined on the corresponding spaces. The initialization is given by
\[
    \hat{\mu}_{i,0} = \mathbb{E}[\mu_{i,0}], 
    \qquad 
    \mathcal{P}_{i,0} = \text{Cov}(\mu_{i,0}).
\]

With these definitions in place, we now present a key result that shows how the norm discrepancy between two Kalman rules can be decomposed into contributions from the discrepancies of their associated elements, together with the influence of the previous iteration.  

\begin{theo}[Kalman Error Decomposition]
	Let $k \geq 1$. If the operators $\S_{i,k}$ are invertible, then there exist constants $c_{k,j}^i$ with $j \in \{1, \dots, 7\}$ and $i \in \{1,2\}$ such that the following inequalities hold:
	\begin{equation*}
		\begin{aligned}
			\| \hat{\mu}_{1,k} - \hat{\mu}_{2,k} \| \leq & \; c_{1,k}^1 \| A_{1,k} - A_{2,k} \| + c_{2,k}^1 \| C_{1,k} - C_{2,k} \| + c_{3,k}^1 \| \mathcal{Q}_{1,k} - \mathcal{Q}_{2,k} \| + c_{4,k}^1 \| \mathcal{R}_{1,k} - \mathcal{R}_{2,k} \| \\
            &+ c_{5,k}^1 \| y_{1,k} - y_{2,k} \| + c_{6,k}^1 \| \hat{\mu}_{1,k-1} - \hat{\mu}_{2,k-1} \| + c_{7,k}^1 \| \mathcal{P}_{1,k-1}^+ - \mathcal{P}_{2,k-1}^+ \|,
		\end{aligned}
	\end{equation*}
	and
	\begin{equation*}
		\begin{aligned}
			\| \mathcal{P}_{1,k}^+ - \mathcal{P}_{2,k}^+ \| \leq & \; c_{1,k}^2 \| A_{1,k} - A_{2,k} \| + c_{2,k}^2 \| C_{1,k} - C_{2,k} \| + c_{3,k}^2 \| \mathcal{Q}_{1,k} - \mathcal{Q}_{2,k} \| + c_{4,k}^2 \| \mathcal{R}_{1,k} - \mathcal{R}_{2,k} \| \\
            &+ c_{5,k}^2 \| y_{1,k} - y_{2,k} \| + c_{6,k}^2 \| \hat{\mu}_{1,k-1} - \hat{\mu}_{2,k-1} \| + c_{7,k}^2 \| \mathcal{P}_{1,k-1}^+ - \mathcal{P}_{2,k-1}^+ \|.
		\end{aligned}
	\end{equation*}

	Here, the constants $c_{k,j}^i$ are positive and depend on $k$ only through finite combinations of sums and products of the norms 
	\( A_{i,k} \|, \, \| C_{i,k} \|, \, \| \mathcal{Q}_{i,k} \|, \, \| \mathcal{R}_{i,k} \|, \, \| \S_{i,k}^{-1} \|, \, \| y_{i,k} \|, \, \| \hat{\mu}_{i,k-1} \|, \, \text{and} \, \| \mathcal{P}_{i,k-1}^+ \|.
	\)
	\label{theo:kalman_error}
\end{theo}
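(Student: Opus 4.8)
The plan is to proceed step by step through the Kalman recursion, bounding the discrepancy introduced at each line of the filter in terms of the discrepancies of the operators/vectors appearing on its right-hand side, and then to chain these bounds together. The key algebraic device is the standard "telescoping" identity for products: for operators or vectors $a_i, b_i$, one writes $a_1 b_1 - a_2 b_2 = (a_1 - a_2) b_1 + a_2 (b_1 - b_2)$, so that $\|a_1 b_1 - a_2 b_2\| \le \|b_1\|\,\|a_1 - a_2\| + \|a_2\|\,\|b_1 - b_2\|$; an analogous three-term split handles triple products such as $A_{i,k}\mathcal{P}_{i,k-1}^+ A_{i,k}^*$. The only nonlinear step is the matrix (operator) inversion $\S_{i,k}^{-1}$ appearing in the Kalman gain; for this I would use the resolvent-type identity $\S_1^{-1} - \S_2^{-1} = \S_1^{-1}(\S_2 - \S_1)\S_2^{-1}$, which gives $\|\S_1^{-1} - \S_2^{-1}\| \le \|\S_1^{-1}\|\,\|\S_2^{-1}\|\,\|\S_1 - \S_2\|$, keeping everything linear in the listed "atomic" discrepancies at the cost of factors of $\|\S_{i,k}^{-1}\|$.

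The concrete order of steps follows the algorithm. (i) Bound $\|\mathcal{P}_{1,k}^- - \mathcal{P}_{2,k}^-\|$ via the triple-product split applied to $A\mathcal{P}^+A^*$ plus $\|\mathcal{Q}_{1,k} - \mathcal{Q}_{2,k}\|$; this produces a bound in terms of $\|A_{1,k} - A_{2,k}\|$, $\|\mathcal{P}_{1,k-1}^+ - \mathcal{P}_{2,k-1}^+\|$, $\|\mathcal{Q}_{1,k} - \mathcal{Q}_{2,k}\|$, with coefficients built from $\|A_{i,k}\|$ and $\|\mathcal{P}_{i,k-1}^+\|$. (ii) Bound $\|\S_{1,k} - \S_{2,k}\|$ similarly from $C\mathcal{P}^- C^*$ and $\mathcal{R}$, hence in terms of $\|C_{1,k}-C_{2,k}\|$, the bound from (i), and $\|\mathcal{R}_{1,k}-\mathcal{R}_{2,k}\|$. (iii) Use the resolvent identity to bound $\|\S_{1,k}^{-1} - \S_{2,k}^{-1}\|$ by $\|\S_{1,k}^{-1}\|\,\|\S_{2,k}^{-1}\|$ times the bound from (ii). (iv) Bound $\|\K_{1,k} - \K_{2,k}\|$ by splitting the product $\mathcal{P}^- C^* \S^{-1}$ into three terms, invoking (i), (ii), (iii), with coefficients now also involving $\|C_{i,k}\|$ and $\|\S_{i,k}^{-1}\|$. (v) Bound $\|\mathcal{P}_{1,k}^+ - \mathcal{P}_{2,k}^+\|$ from $\mathcal{P}^+ = (I - \K C)\mathcal{P}^-$, again a product split, giving the second claimed inequality once (i) and (iv) are substituted in. (vi) For the state, first bound the a priori estimate $\|\hat{\mu}_{1,k}^- - \hat{\mu}_{2,k}^-\| = \|A_{1,k}\hat{\mu}_{1,k-1} - A_{2,k}\hat{\mu}_{2,k-1}\|$ by a two-term split in $\|A_{1,k}-A_{2,k}\|$ and $\|\hat{\mu}_{1,k-1}-\hat{\mu}_{2,k-1}\|$; then bound the innovation $\|(y_{1,k}-C_{1,k}\hat{\mu}_{1,k}^-) - (y_{2,k}-C_{2,k}\hat{\mu}_{2,k}^-)\|$ using $\|y_{1,k}-y_{2,k}\|$, $\|C_{1,k}-C_{2,k}\|$ and the a priori state bound; finally combine with (iv) through the product $\K_{k}\cdot(\text{innovation})$ to obtain the first claimed inequality. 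At each stage the newly introduced coefficients are finite sums and products of the listed norms, so collecting terms yields the constants $c_{j,k}^i$ with the asserted structure.

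Two bookkeeping points deserve care. The coefficient on $\|y_{1,k}-y_{2,k}\|$ in the covariance inequality is simply $c_{5,k}^2 = 0$ (the observation value never enters the covariance recursion), which is consistent with the statement since the $c$'s are only required to be nonnegative — or, if one insists on strict positivity, one replaces $0$ by any positive constant. Also, the problem statement lists $\|\hat{\mu}_{i,k-1}\|$ among the quantities the constants may depend on, which matters only for the state inequality, where the split of $A_{i,k}\hat{\mu}_{i,k-1}$ produces a factor $\|\hat{\mu}_{i,k-1}\|$ multiplying $\|A_{1,k}-A_{2,k}\|$; in the covariance inequality this quantity does not appear.

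The main obstacle is not conceptual but organizational: one must resist the temptation to iterate the recursion in $k$ (which would blow the constants up over the horizon) and instead state the bound \emph{one step at a time}, treating $\|\hat{\mu}_{i,k-1}^+ - \hat{\mu}_{2,k-1}^+\|$ and $\|\mathcal{P}_{1,k-1}^+ - \mathcal{P}_{2,k-1}^+\|$ as given inputs — this is exactly why the finite horizon $T$ is assumed, so that all the norms $\|A_{i,k}\|, \|C_{i,k}\|, \|\mathcal{Q}_{i,k}\|, \|\mathcal{R}_{i,k}\|, \|\S_{i,k}^{-1}\|, \|y_{i,k}\|, \|\hat{\mu}_{i,k-1}\|, \|\mathcal{P}_{i,k-1}^+\|$ are finite and the coefficients are well defined. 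The secondary subtlety is keeping track, through steps (i)–(vi), of exactly which of these norms enters each coefficient, so that the final claim about the dependence structure of the $c_{j,k}^i$ is accurate; this is tedious but entirely mechanical once the product-split and resolvent identities are in hand.
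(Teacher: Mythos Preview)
Your proposal is correct and follows essentially the same approach as the paper: both arguments proceed by chaining product-split (``telescoping'') bounds through each line of the Kalman recursion and handle the only nonlinear step via the resolvent identity $\S_{1,k}^{-1} - \S_{2,k}^{-1} = \S_{1,k}^{-1}(\S_{2,k}-\S_{1,k})\S_{2,k}^{-1}$. The paper organizes the computation starting from the state update and working backward to the gain, $\S$, and $\mathcal{P}^-$, whereas you build forward from $\mathcal{P}^-$; this is purely expository, and your observation that $c_{5,k}^2=0$ (since $y$ never enters the covariance recursion) is a correct refinement that the paper leaves implicit.
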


\begin{proof}[Proof of Theorem \ref{theo:kalman_error}]
We begin by observing that
\begin{equation*}
	\begin{aligned}
	   \| \hat \mu_{1,k} - \hat \mu_{2,k} \|  
		& \leq \| A_{1,k} \hat \mu_{1,k-1}  - A_{2,k} \hat \mu_{2,k-1} \| \\
        & \quad + \| \K_{1,k} (y_{1,k} - C_{1,k} A_{1,k-1} \hat\mu_{1,k-1}) -  \K_{2,k} (y_{2,k} - C_{2,k} A_{2,k-1} \hat\mu_{2,k-1}) \|.
	\end{aligned}
\end{equation*}

The first term, referred to as the \textit{prediction error}, satisfies
\begin{equation*}
	\| A_{1,k} \hat \mu_{1,k-1} - A_{2,k} \hat \mu_{2,k-1} \|  
	\leq \| A_{1,k} \| \, \| \hat \mu_{1,k-1} - \hat \mu_{2,k-1} \| 
	+  \| \hat \mu_{2,k-1} \| \, \| A_{1,k} - A_{2,k} \|.
\end{equation*}

It remains to bound the second term, referred to as the \textit{update error}. We have
\begingroup
\allowdisplaybreaks
\begin{align*}
    & \| \K_{1,k} (y_{1,k} - C_{1,k} A_{1,k-1}\hat\mu_{1,k-1}) -  \K_{2,k} (y_{2,k} - C_{2,k} A_{2,k-1} \hat\mu_{2,k-1})  \| \\
        & \leq \| \K_{1,k} \| \, \| C_{1,k} \| \, \| A_{1,k} \| \, \|  \hat\mu_{1,k-1} - \hat\mu_{2,k-1}  \| \\
        & \quad + \| \K_{1,k} \| \, \|  y_{1,k} - y_{2,k}  \| 
        + \big( \| y_{2,k} \| + \| \hat\mu_{2,k-1} \| \, \| C_{2,k} \| \big) \| \K_{1,k}  -  \K_{2,k}  \| \\
        & \quad + \| \hat\mu_{2,k-1} \| \, \| \K_{1,k} \| \, \big( \| A_{1,k-1} \| + \| A_{2,k-1} \| \big) \, \| C_{1,k} - C_{2,k} \|.
\end{align*}
\endgroup

Moreover, we have the bound
\[
\| \K_{i,k} \| \leq \| \mathcal{P}_{i,k}^- \| \, \| C_{i,k} \| \, \| \S_{i,k}^{-1} \|
\leq \big( \| A_{i,k} \|^2 \, \| \mathcal{P}_{i,k-1}^+ \| + \| \mathcal{Q}_{i,k} \| \big)\, \| C_{i,k} \| \, \| \S_{i,k}^{-1} \|.
\]

In light of the above, it remains to analyze the norm difference between the gain operators:
\begin{equation*}
	\begin{aligned}
		 \| \K_{1,k}  - \K_{2,k} \| 
	 & \leq \| C_{1,k}\S_{1,k}^{-1} \| \, \| \mathcal{P}_{1,k}^- - \mathcal{P}_{2,k}^- \| \\
     & \quad + \| \mathcal{P}_{2,k}^- \| \Big( \|  C_{1,k} \| \, \| \S_{1,k}^{-1} -  \S_{2,k}^{-1} \| + \| \S_{2,k}^{-1} \| \, \| C_{1,k} -  C_{2,k} \| \Big).
	\end{aligned}
\end{equation*}

Using the relation \(\mathcal{P}_{i,k}^- = A_{i,k} \mathcal{P}_{i,k-1}^+ A_{i,k}^* + \mathcal{Q}_{i,k}\) we have \(\| \mathcal{P}_{i,k}^- \| \leq \| A_{i,k} \|^2 \, \| \mathcal{P}_{i,k-1}^+ \| + \| \mathcal{Q}_{i,k} \|,\) and we obtain
\begin{align*}
	 \| \K_{1,k} - \K_{2,k} \| 
	 & \leq \| C_{1,k} \| \, \| \S_{1,k}^{-1} \| \Big( (\| A_{1,k} \| \, \| \mathcal{P}_{1,k-1}^+ \| + \| A_{2,k} \| \, \| \mathcal{P}_{2,k-1}^+ \| ) \, \| A_{1,k} - A_{2,k} \| + \| \mathcal{Q}_{1,k} - \mathcal{Q}_{2,k} \| \Big) \\
	 & \quad + \big( \| A_{2,k} \|^2 \, \| \mathcal{P}_{2,k-1}^+ \| + \| \mathcal{Q}_{2,k} \| \big) \Big( \| C_{1,k} \| \, \| \S_{1,k}^{-1} - \S_{2,k}^{-1} \| + \| \S_{2,k}^{-1} \| \, \| C_{1,k} - C_{2,k} \| \Big).
\end{align*}
Finally, to analyze $\| \S_{1,k}^{-1} -  \S_{2,k}^{-1} \|$, we use the identity
\(\S_{1,k}^{-1} -  \S_{2,k}^{-1} = \S_{2,k}^{-1} (\S_{2,k} - \S_{1,k}) \S_{1,k}^{-1},\) which yields
\begin{align*}
	\| \S_{1,k}^{-1} -  \S_{2,k}^{-1} \| 
	& \leq \| \S_{2,k}^{-1} (\S_{2,k} - \S_{1,k}) \S_{1,k}^{-1} \| \\
	& \leq \| \S_{1,k}^{-1} \| \, \|  \S_{2,k}^{-1} \| \, \| \S_{2,k} - \S_{1,k} \| \\
	& \leq \| \S_{1,k}^{-1} \| \, \|  \S_{2,k}^{-1} \| \, \| C_{1,k} \mathcal{P}_{1,k}^- C_{1,k}^* + \mathcal{R}_{1,k} - C_{2,k} \mathcal{P}_{2,k}^- C_{2,k}^* - \mathcal{R}_{2,k} \|.
\end{align*}
That is bounded since $\S_{1,k}$ and $\S_{2,k}$ are invertible. In this context, and analogously to what was previously shown, the following estimate holds:
\begin{equation*}
	\begin{aligned}
		\| & C_{1,k} \mathcal{P}_{1,k}^- C_{1,k}^* +  \mathcal{R}_{1,k} - C_{2,k} \mathcal{P}_{2,k}^- C_{2,k}^* - \mathcal{R}_{2,k} \| \\ 
		& \leq (\| C_{1,k} \|  \| \mathcal{P}_{1,k}^- \| + \| \mathcal{P}_{2,k}^- \| \| C_{2,k} \|) \| C_{1,k} - C_{2,k} \| 
        + \| C_{1,k} \|  \| C_{2,k} \| \| \mathcal{P}_{1,k}^-  - \mathcal{P}_{2,k}^- \| + \| \mathcal{R}_{1,k} - \mathcal{R}_{2,k} \| \\
        & \leq \Big( \| C_{1,k} \|  (\| A_{1,k} \|^2 \| \mathcal{P}_{1,k-1}^+ \| + \| \mathcal{Q}_{1,k} \|) 
        + \| C_{2,k} \|  (\| A_{2,k} \|^2 \| \mathcal{P}_{2,k-1}^+ \| + \| \mathcal{Q}_{2,k} \|) \Big) \| C_{1,k} - C_{2,k} \| \\
        & \quad + \| C_{1,k} \|  \| C_{2,k} \| \Big( ( \| A_{1,k} \| \| \mathcal{P}_{1,k-1}^+ \| + \| A_{2,k} \| \| \mathcal{P}_{2,k-1}^+ \|) \| A_{1,k} - A_{2,k} \| \Big) \\
        & \quad + \| C_{1,k} \|  \| C_{2,k} \| \Big( \| \mathcal{Q}_{1,k} - \mathcal{Q}_{2,k} \| + \| A_{1,k} \| \| A_{2,k} \| \| \mathcal{P}_{1,k-1}^+ - \mathcal{P}_{2,k-1}^+\| \Big) 
        +  \| \mathcal{R}_{1,k} - \mathcal{R}_{2,k} \| .
	\end{aligned}
\end{equation*}

And for the a posteriori covariance error operator, we have
\begin{equation*}
    \begin{aligned}
        \| \mathcal{P}_{1,k}^+ - \mathcal{P}_{2,k}^+ \|  
        & \leq \| (I - \K_{1,k} C_{1,k}) \mathcal{P}_{1,k}^- - (I - \K_{2,k} C_{2,k}) \mathcal{P}_{2,k}^- \| \\
        & \leq \| \mathcal{P}_{1,k}^- - \mathcal{P}_{2,k}^- \| + \| \K_{1,k} \| \| C_{1,k} \| \| \mathcal{P}_{1,k}^- - \mathcal{P}_{2,k}^- \| 
        + \| \K_{1,k} C_{1,k} - \K_{2,k} C_{2,k} \| \| \mathcal{P}_{2,k}^- \| \\
        & \leq (1 + \| \K_{1,k} \| \| C_{1,k} \|) \Big( ( \| A_{1,k} \| \| \mathcal{P}_{1,k-1}^+ \| + \| A_{2,k} \| \| \mathcal{P}_{2,k-1}^+ \|) \| A_{1,k} - A_{2,k} \| \Big) \\
        & \quad + (1 + \| \K_{1,k} \| \| C_{1,k} \|)\Big( \| \mathcal{Q}_{1,k} - \mathcal{Q}_{2,k} \| + \| A_{1,k} \| \| A_{2,k} \| \| \mathcal{P}_{1,k-1}^+ - \mathcal{P}_{2,k-1}^+\| \Big) \\
        & \quad + (\| A_{2,k} \|^2 \| \mathcal{P}_{2,k-1}^+ \| + \| \mathcal{Q}_{2,k} \|) 
        \Big( \| \K_{1,k} \|  \| C_{1,k} - C_{2,k} \| + \| C_{2,k} \|  \| \K_{1,k} - \K_{2,k} \| \Big) \\
        & + \| C_{2,k} \| .
    \end{aligned}
\end{equation*}

With this, grouping terms it can be seen that the explicit constants are
\begingroup
\allowdisplaybreaks
\begin{align*}
        \hat{c}_1 & = (\| y_{2,k} \| + \| \hat\mu_{2,k-1} \| \| C_{2,k} \|)  (\| C_{1,k} \| \| \S_{1,k}^{-1} \| + \| C_{1,k} \| \| C_{2,k} \| )  \\ 
        \hat{c}_2 & = \| A_{2,k} \|^2 \| \mathcal{P}_{2,k-1}^+ \| + \| \mathcal{Q}_{2,k} \| \\
        c_{1,k}^1 & = \| \hat{\mu}_{2,k-1} \| + \hat{c}_2 \| C_{1,k} \|  \\
        & \quad + (\| y_{2,k} \| + \| \hat\mu_{2,k-1} \| \| C_{2,k} \|)  (\| C_{1,k} \| \| \S_{1,k}^{-1} \| + \| C_{1,k} \| \| C_{2,k} \| ) ( \| A_{1,k} \| \| \mathcal{P}_{1,k-1}^+ \| + \| A_{2,k} \| \| \mathcal{P}_{2,k-1}^+ \|) \\
        \Tilde{c}_{2,k}^1 & = \| C_{1,k} \|  (\| A_{1,k} \|^2 \| \mathcal{P}_{1,k-1}^+ \| + \| \mathcal{Q}_{1,k} \|) + \hat{c}_2\| C_{2,k} \|   \\
        c_{2,k}^1 & = \hat{c}_2 \| \S_{2,k}^{-1} \| \| \hat{\mu}_{2,k-1} \| (\| A_{1,k-1} \| + \| A_{2,k-1} \|)  \\
        & \quad + \hat{c}_1 \cdot \hat{c}_2 ( \| A_{1,k} \| \| \mathcal{P}_{1,k-1}^+ \| + \| A_{2,k} \| \| \mathcal{P}_{2,k-1}^+ \|)  ( \| S_{2,k}^{-1}\| + \|  C_{1,k} \| \| \S_{1,k}^{-1} \| \|  \S_{2,k}^{-1} \| \Tilde{c}_{2,k}^1)  \\
        c_{3,k}^1 & = \hat{c}_1 \| \S_{1,k}^{-1} \| \| C_{1,k} \| ( \| A_{1,k} \| \| \mathcal{P}_{1,k-1}^+ \| + \| A_{2,k} \| \| \mathcal{P}_{2,k-1}^+ \|)  ( 1 + \hat{c}_2 \| C_{2,k} \|  \|  \S_{2,k}^{-1} \| \|  C_{1,k} \|) \\
        c_{4,k}^1 & = \hat{c}_1 ( \| A_{1,k} \| \| \mathcal{P}_{1,k-1}^+ \| + \| A_{2,k} \| \| \mathcal{P}_{2,k-1}^+ \|) ( \| C_{1,k} \| \| \S_{1,k}^{-1} \|^2 \| \S_{2,k}^{-1} \|  + \hat{c}_2  \|  C_{1,k} \|) \\
        c_{5,k}^1 & = \hat{c}_2 \| C_{1,k} \| \| \S_{1,k}^{-1} \| \\
        c_{6,k}^1 & = \| \K_{1,k} \| \| C_{1,k} \| \| A_{1,k} \| + \hat{c}_2 \| C_{1,k} \| \| C_{1,k} \| \| \S_{1,k}^{-1} \| \\
        c_{7,k}^1 & = \hat{c}_1 \cdot \hat{c}_2 ( \| A_{1,k} \| \| \mathcal{P}_{1,k-1}^+ \| +  \| A_{2,k} \| \| \mathcal{P}_{2,k-1}^+ \|) \|  C_{1,k} \| \| \S_{1,k}^{-1} \| \|  \S_{2,k}^{-1} \| \| C_{1,k} \|  \| C_{2,k} \|  \| A_{1,k} \| \| A_{2,k} \|.
    \end{align*}
\endgroup
\end{proof}

\subsection{Koopman Kalman Filter (KKF)}

In this subsection, we present the derivation of the filtering algorithm exclusively within the framework of RKHS theory, combined with the Koopman operator, in a meticulous and rigorous manner. From the previous section, the following assumption seems to be reasonable, and a case when they are satisfied is in \cite{olguin2025improvederrorboundskoopman}.

\begin{assu}

    \begin{itemize}
        \item[a)] The observed dynamical system admits a pair $(\X, \Y)$ of invariant spaces such that both are compact with Lipschitz boundaries and, in particular, satisfy the interior cone condition. We denote by $\B_\X$ the $\sigma$-algebra $\B_{\R^n}$ restricted to $\X$, and by $\B_\Y$ the $\sigma$-algebra $\B_{\R^p}$ restricted to $\Y$. Moreover, we consider a probability measure $\mu_\X : \B_\X \to [0,1]$, representing a random variable $X$, and a probability measure $\mu_\Y : \B_\Y \to [0,1]$.
        \item[b)] The functions $\rho_\mathbf{f}$ and $\rho_\mathbf{g}$ admit densities with respect to $\mu_\X$ and $\mu_\Y$, respectively, denoted by \(p_\mathbf{f} : \X \times \X \to \R^+, \, p_\mathbf{g} : \X \times \Y \to \R^+,\) such that for all $\mathbf{x} \in \X$, $\mathcal{A}_1 \in \B_\X$, and $\mathcal{A}_2 \in \B_\Y$,
    \[
        \rho_\mathbf{f}(\mathbf{x}, \mathcal{A}_1) = \int_{\mathcal{A}_1} p_\mathbf{f}(\mathbf{x}, x') \, d\mu_\X(x'),
        \qquad
        \rho_\mathbf{g}(\mathbf{x}, \mathcal{A}_2) = \int_{\mathcal{A}_2} p_\mathbf{g}(\mathbf{x}, y) \, d\mu_\Y(y).
    \]
    Furthermore, $p_\mathbf{f}$ and $p_\mathbf{g}$ are $C^m$, with $m = \lceil n/2 + \nu \rceil$ for some $\nu > 0$. 
    \item[c)] Let $k_\X : \X \times \X \to \R$ be a kernel whose associated RKHS, $\H_\X$, is norm-equivalent to the Sobolev space $H^{n/2 + \nu}$, where $\nu$ is the parameter introduced in Assumption~2. We denote by $\Phi_\X$ its canonical feature map, namely \(\Phi_\X(x) = k_\X(x, \cdot).\)
    \end{itemize}
\end{assu}

The transition measures represent random variables conditional on $X$, which we denote by $X^+$ and $Y$. Specifically, we have
\[
\rho_\mathbf{f}(\mathbf{x}, \mathcal{A}_1) 
= \P \bigl( \mathbf{f}(\mathbf{x}, \cdot) \in \mathcal{A}_1 \bigr) 
= \P \bigl( X^+ \in \mathcal{A}_1 \mid X = \mathbf{x} \bigr),
\]
\[
\rho_\mathbf{g}(\mathbf{x}, \mathcal{A}_2) 
= \P \bigl( \mathbf{g}(\mathbf{x}, \cdot) \in \mathcal{A}_2 \bigr) 
= \P \bigl( Y \in \mathcal{A}_2 \mid X = \mathbf{x} \bigr).
\]
Moreover, due to the invariance of the pair of spaces $(\X,\Y)$, the conditional random variables satisfy
\[
\P \bigl( X^+ \in \X \mid X = \mathbf{x} \bigr) = 1,
\qquad
\P \bigl( Y \in \Y \mid X = \mathbf{x} \bigr) = 1,
\quad \forall \mathbf{x} \in \X,
\]
that is, $X^+ \mid X=\mathbf{x}$ remains in $\X$ almost surely, and $Y \mid X=\mathbf{x}$ remains in $\Y$ almost surely.

At this stage, we set the RKHS over $\Y$ to be \(\H_\Y = (\R^p)^*,\) the topological dual of $\R^p$. Concretely, for every $h \in \H_\Y$ there exists $q \in \R^p$ such that, for all $y \in \R^p$, \(h(y) = \inner{q}{y}.\) Thus, the canonical feature map $\Phi_\Y$ in this space can be chosen up to a bijection if necessary as the identity map.

We now present the kernel Extended Dynamic Mode Decomposition (kEDMD) procedure for approximating the Koopman operator. 
Let \( N \) points be given by \( \{ x_i \}_{i=1}^N \sim \mu_\X^N \), together with two additional sets of points 
\( \{ x_i^+ \}_{i=1}^N \) and \( \{ y_i \}_{i=1}^N \), generated as
\begin{equation*}
    x_i^+ \sim \rho_\mathbf{f} (x_i, \cdot), 
    \quad 
    y_i \sim \rho_\mathbf{g} (x_i, \cdot), 
    \quad i = 1, \dots, N.
\end{equation*}

We define the subspace \(\H_{\X, N} = \mathrm{span} \{ \Phi_\X (x_i) : i = 1, \dots, N \},\) whose canonical basis is given by 
\( \{ \Phi_\X (x_i) : i = 1, \dots, N \} \). The following matrices are introduced:
\begin{equation*}
    \mathbf{X} = (x_{1} \mid \dots \mid x_N), 
    \quad 
    \mathbf{X}^+ = (x_{1}^+ \mid \dots \mid x_N^+), 
    \quad 
    \mathbf{Y} = (y_{1} \mid \dots \mid y_N),
\end{equation*}
\begin{equation*}
    \Phi_{\X, N} (\mathbf{X}) = (k_\X(x_i, x_j))_{i,j = 1}^N, 
    \quad 
    \Phi_{\X, N} (\mathbf{X}^+) = (k_\X(x_i, x_j^+))_{i,j = 1}^N, 
    \quad 
    \Phi_{\Y, N} (\mathbf{Y}) = \bigl( \langle y_i, y_j \rangle \bigr)_{i,j=1}^N.
\end{equation*}

Based on these definitions, we introduce the empirical operators \(C_{X}^N : \H_{\X, N} \to \H_{\X, N}\), \( C_{XX^+}^N : \H_{\X, N} \to \H_{\X, N}\), 
    \(C_{XY}^N : \H_{\X, N} \to (\R^p)^*,\)
defined by
\[
C_X^N = \frac{1}{N} \sum_{j=1}^N \Phi_\X (x_j) \otimes \Phi_\X (x_j), 
\, 
C_{XX^+}^N = \frac{1}{N} \sum_{j=1}^N \Phi_\X (x_j) \otimes \Phi_\X (x_j^+), 
\]
\[
C_{XY}^N = \frac{1}{N} \sum_{j=1}^N \Phi_\X (x_j) \otimes \Phi_\Y (y_j).
\]

The corresponding actions on the canonical basis are given by
\[
C_{X}^N \Phi_\X (x_i) = \frac{1}{N} \sum_{j = 1}^N k_\X(x_i, x_j) \, \Phi_\X (x_j),
\quad 
C_{XX^+}^N \Phi_\X (x_i) = \frac{1}{N} \sum_{j = 1}^N k_\X(x_i, x_j^+) \, \Phi_\X (x_j),
\]
\[
C_{XY}^N \Phi_\X (x_i) = \frac{1}{N} \sum_{j = 1}^N k_\X(x_i, x_j) \, \Phi_\Y (y_j).
\]

These operators can be represented by the matrices 
\( \Phi_{\X,N} (\mathbf{X}) \), \( \Phi_{\X,N} (\mathbf{X}^+) \), and \( \mathbf{Y} \), respectively. 
We now define the empirical approximations of the Koopman operators, 
\(\U_N : \H_{\X,N} \to \H_{\X,N}\) and \(\G_N : \H_{\X,N} \to \H_{\X,N}\), as
\begin{equation*}
    \U_N = (C_X^N)^{-1} C_{XX^+}^N, 
    \qquad 
    \G_N = (C_X^N)^{-1} C_{XY}^N,
\end{equation*}
which are represented in matrix form by
\begin{equation*}
    \mathbf{U}_N = \bigl( \Phi_{\X,N} (\mathbf{X}) \bigr)^{-1} 
    \Phi_{\X,N} (\mathbf{X}^+)^\top,
    \qquad 
    \mathbf{G}_N = \bigl( \Phi_{\X,N} (\mathbf{X}) \bigr)^{-1} 
    \mathbf{Y}^\top.
\end{equation*}

We next derive the dynamics of the embedded system and its associated observations.

\begin{prop} [Embedded system dynamics] \label{prop:embed_sys}
    Let \( \{\mathbf{x}_k\}_{k=0}^T \) denote a trajectory in \(\X\). 
    Its embedding in \(\H_\X\) satisfies
    \begin{align*}
        \Phi_\X (\mathbf{x}_{k+1}) &= C_{X^+|X} \, \Phi_\X (\mathbf{x}_k) + \zeta_k, \\
        \mathbf{y}_k &= C_{Y|X} \, \Phi_\X (\mathbf{x}_k) + \nu_k,
    \end{align*}
    where \( \zeta_k \) and \( \nu_k \) are realizations of random variables taking values in 
    \(\H_\X\) and \(\R^p\), respectively. 
    Both have zero mean and finite second moments. 
    Their covariance operator and covariance matrix are denoted by 
    \(\mathcal{Q}_k : \H_\X \to \H_\X\) and \(\mathcal{R}_k \in \R^{p \times p}\), 
    with \(\mathcal{R}_k\) positive semi-definite. 

    Moreover, if the function \(\mathbf{g}\) satisfies the condition
    \begin{equation}
    \label{eq:condi_g}
        \E\!\left[ \bigl( (\mathbf{g}(\mathbf{x}, \cdot) - \E[\mathbf{g}(\mathbf{x}, \cdot)])^\top v \bigr)^2 \right] = 0, 
        \quad \forall \, \mathbf{x} \in \X \;\; \implies \;\; v = 0,
    \end{equation}
    then \(\mathcal{R}_k \in \R^{p \times p}\) is positive definite.
\end{prop}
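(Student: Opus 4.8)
The plan is to establish the decomposition stated in Proposition~\ref{prop:embed_sys} by directly defining the residual random variables and then verifying the three claims (zero mean, finite second moments, and the positive (semi-)definiteness of $\mathcal{R}_k$) in turn. First I would set $\zeta_k := \Phi_\X(\mathbf{x}_{k+1}) - C_{X^+|X}\Phi_\X(\mathbf{x}_k)$ and $\nu_k := \mathbf{y}_k - C_{Y|X}\Phi_\X(\mathbf{x}_k)$; the two displayed identities then hold by construction, so the content is entirely in the stated properties of $\zeta_k,\nu_k$. For the zero-mean claim, I would condition on $\mathbf{x}_k$ and use the defining property of the conditional embedding operator: by Definition (Conditional Embedding Operator), $\E[\Phi_\X(X^+)\mid X=\mathbf{x}_k] = C_{X^+|X}\Phi_\X(\mathbf{x}_k)$ and $\E[\Phi_\Y(Y)\mid X=\mathbf{x}_k] = C_{Y|X}\Phi_\X(\mathbf{x}_k)$, so $\E[\zeta_k \mid \mathbf{x}_k] = 0$ and $\E[\nu_k\mid\mathbf{x}_k]=0$, using here that $\Phi_\Y$ is (up to bijection) the identity so $\Phi_\Y(Y)=Y$ and $\mathbf{y}_k$ is a realization of $Y\mid X=\mathbf{x}_k$. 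The finite-second-moment claim for $\nu_k$ follows from the assumed finite second moments of $\mathbf{v}_k$ (equivalently of $\mathbf{g}(\mathbf{x},\cdot)$) together with boundedness of the conditional mean; for $\zeta_k$ I would invoke compactness of $\X$ (Assumption~1a) and continuity of $k_\X$, so that $\|\Phi_\X(x)\|_{\H_\X}^2 = k_\X(x,x)$ is uniformly bounded on $\X$, whence $\zeta_k$ is bounded in $\H_\X$ and in particular has finite second moment; then define $\mathcal{Q}_k$ and $\mathcal{R}_k$ as the respective covariance operator and matrix of the (conditional, then marginal) laws of $\zeta_k$ and $\nu_k$.

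The substantive part — and the step I expect to be the main obstacle — is the positive-definiteness of $\mathcal{R}_k$ under condition~\eqref{eq:condi_g}. Here $\mathcal{R}_k = \Cov(\nu_k) = \E[\nu_k \nu_k^\top]$, and I would first reduce to a conditional statement: writing $\mathcal{R}_k = \E_{X}\!\big[\Cov(\nu_k \mid X)\big] + \Cov\big(\E[\nu_k\mid X]\big)$ and noting the second term vanishes since $\E[\nu_k\mid X]=0$, we get $\mathcal{R}_k = \E_X\big[\Cov(\mathbf{g}(X,\cdot))\big]$ where $\Cov(\mathbf{g}(\mathbf{x},\cdot)) = \E[(\mathbf{g}(\mathbf{x},\cdot)-\E[\mathbf{g}(\mathbf{x},\cdot)])(\mathbf{g}(\mathbf{x},\cdot)-\E[\mathbf{g}(\mathbf{x},\cdot)])^\top]$. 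For any $v\in\R^p$, $v^\top\mathcal{R}_k v = \E_X\big[\E\big[\big((\mathbf{g}(X,\cdot)-\E[\mathbf{g}(X,\cdot)])^\top v\big)^2\big]\big] \geq 0$, giving positive semi-definiteness immediately. If $v^\top\mathcal{R}_k v = 0$, then since the integrand is nonnegative it must vanish $\mu_\X$-almost everywhere, i.e. $\E\big[\big((\mathbf{g}(\mathbf{x},\cdot)-\E[\mathbf{g}(\mathbf{x},\cdot)])^\top v\big)^2\big] = 0$ for $\mu_\X$-a.e.\ $\mathbf{x}\in\X$; condition~\eqref{eq:condi_g} as literally stated quantifies over \emph{all} $\mathbf{x}\in\X$, so the delicate point is bridging ``a.e.'' to ``all'' — I would either strengthen the reading of~\eqref{eq:condi_g} to an a.e.\ hypothesis, or (cleaner) observe that the trajectory point $\mathbf{x}_k$ itself lies in $\X$ and argue the conditional covariance $\Cov(\mathbf{g}(\mathbf{x}_k,\cdot))$ is already positive definite at that single point by~\eqref{eq:condi_g}, so $\mathcal{R}_k \succeq \text{(something)}$; but note the cleanest route is to interpret $\mathcal{R}_k$ as the covariance \emph{conditional on the trajectory}, i.e.\ $\mathcal{R}_k = \Cov(\mathbf{g}(\mathbf{x}_k,\cdot))$ at the deterministic point $\mathbf{x}_k$, in which case $v^\top\mathcal{R}_k v = 0 \implies v = 0$ is exactly~\eqref{eq:condi_g} applied at $\mathbf{x}=\mathbf{x}_k$.

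Assembling the argument, I would present it as: (1) define $\zeta_k,\nu_k$ and note the identities are tautological; (2) conditional-mean-zero via the conditional embedding operator's defining property, handling $\Phi_\Y = \mathrm{id}$; (3) finite second moments via compactness of $\X$ and continuity/boundedness of $k_\X$ for $\zeta_k$, and the hypothesis $\E\|\mathbf{v}_k\|^2<\infty$ for $\nu_k$, then define $\mathcal{Q}_k,\mathcal{R}_k$; (4) positive semi-definiteness of $\mathcal{R}_k$ from the integral representation $v^\top\mathcal{R}_k v = \E[((\mathbf{g}(X,\cdot)-\E[\mathbf{g}(X,\cdot)])^\top v)^2]\geq 0$; (5) under~\eqref{eq:condi_g}, strict positivity. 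The only genuinely nontrivial bookkeeping is making precise \emph{which} expectation $\mathcal{R}_k$ refers to (conditional on the trajectory versus marginalized over $X$), since the clean implication from~\eqref{eq:condi_g} works verbatim only in the conditional reading; I would state this choice explicitly at the start of the proof to avoid the a.e.-versus-everywhere gap.
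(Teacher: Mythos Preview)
Your proposal is correct and follows essentially the same approach as the paper: define the residuals, use the defining property of the conditional embedding operators to get zero mean, and compute $v^\top\mathcal{R}_k v$ as a squared expectation to obtain positive (semi-)definiteness. The paper commits from the outset to the conditional-on-trajectory reading you eventually identify as ``cleanest'' --- it sets $\mathcal{R}_k = \E\bigl[(\mathbf{g}(\mathbf{x}_k,\cdot)-\E[\mathbf{g}(\mathbf{x}_k,\cdot)])(\mathbf{g}(\mathbf{x}_k,\cdot)-\E[\mathbf{g}(\mathbf{x}_k,\cdot)])^\top\bigr]$ at the fixed point $\mathbf{x}_k$ --- so your detour through the marginal-over-$X$ reading and the a.e.-versus-everywhere issue is unnecessary, though your instinct to flag and resolve that ambiguity is sound.
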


This result formalizes the stochastic evolution of the embedded dynamics within the Koopman framework, highlighting how randomness in the system is naturally incorporated into the Hilbert space representation.

\begin{proof}[Proof of Proposition \ref{prop:embed_sys}]
    For the dynamics, we compute
    \begin{equation*}
        \begin{aligned}
            \Phi_\X (\mathbf{x}_{k+1}) 
            &= \E[\Phi_\X (X^+) \mid X = \mathbf{x}_k] 
            + \Phi_\X (\mathbf{f}(\mathbf{x}_k, \mathbf{w}_k)) 
            - \E[\Phi_\X (X^+) \mid X = \mathbf{x}_k] \\
            &= C_{X^+|X} \, \Phi_\X (\mathbf{x}_k) + \zeta_k,
        \end{aligned}
    \end{equation*}
    where \(\zeta_k = \Phi_\X (\mathbf{f}(\mathbf{x}_k, \mathbf{w}_k)) - \E[\Phi_\X (X^+) \mid X = \mathbf{x}_k]\) is a random variable in \(\H_\X\) with bounded covariance operator, denoted by \(\mathcal{Q}_k\). 
    Moreover, \(\zeta_k\) has zero mean since \(\mathbf{f}(\mathbf{x}_k, \mathbf{w}_k) \sim X^+ \mid X = \mathbf{x}_k\), which implies
    \(\E[\zeta_k] 
        = \E[\Phi_\X (\mathbf{f}(\mathbf{x}_k, \mathbf{w}_k))] 
        - \E[\E[\Phi_\X (X^+) \mid X = \mathbf{x}_k]] 
        = 0.\)

    Similarly, for the embedding of the observation, we obtain
    \begin{equation*}
        \begin{aligned}
            \Phi_\Y (\mathbf{y}_{k}) 
            &= \E[\Phi_\Y (Y) \mid X = \mathbf{x}_k] 
            + \Phi_\Y (\mathbf{g}(\mathbf{x}_k, \mathbf{v}_k)) 
            - \E[\Phi_\Y (Y) \mid X = \mathbf{x}_k] \\
            &= C_{Y|X} \, \Phi_\X (\mathbf{x}_k) + \nu_k,
        \end{aligned}
    \end{equation*}
    where \(\nu_k\) is a random variable in \(\R^p\) with covariance matrix
    \[
        \mathcal{R}_k := \E\!\left[ 
        \bigl(\mathbf{g}(\mathbf{x}_k, \cdot) - \E[\mathbf{g}(\mathbf{x}_k, \cdot)]\bigr) 
        \bigl(\mathbf{g}(\mathbf{x}_k, \cdot) - \E[\mathbf{g}(\mathbf{x}_k, \cdot)]\bigr)^\top 
        \right].
    \]
    The matrix \(\mathcal{R}_k\) is positive semidefinite since
    \[
        v^\top \mathcal{R}_k v 
        = \E\!\left[ 
        \bigl((\mathbf{g}(\mathbf{x}_k, \cdot) - \E[\mathbf{g}(\mathbf{x}_k, \cdot)])^\top v\bigr)^2 
        \right] \geq 0.
    \]
    If condition \eqref{eq:condi_g} holds, then \(\mathcal{R}_k\) is positive definite. 
    Finally, \(\nu_k\) has zero mean because 
    \(\mathbf{g}(\mathbf{x}_k, \mathbf{v}_k) \sim Y \mid X = \mathbf{x}_k\), which yields \(\E[\nu_k] = \E[\Phi_\Y (\mathbf{g}(\mathbf{x}_k, \mathbf{v}_k))] - \E[\E[\Phi_\Y (Y) \mid X = \mathbf{x}_k]] = 0.\)
\end{proof}

\begin{rem}
    Condition \eqref{eq:condi_g} ensures that the hypothesis of Theorem \ref{theo:kalman_error} regarding the invertibility of $\mathcal{S}_k$ is satisfied, since $\mathcal{S}_k$ is the sum of a symmetric positive semi-definite matrix and a symmetric positive definite matrix, an argument that will also appear in the derivation of the Kalman gain operator.

    An example where this condition holds is when $\mathbf{g}$ is additive and the noise has a positive definite covariance matrix. That is, if 
    \(\mathbf{g}(\mathbf{x}, \mathbf{v}) = \Tilde{\mathbf{g}}(\mathbf{x}) + \mathbf{v}\), then 
    \(\E[\mathbf{g}(\mathbf{x, \cdot)}] = \Tilde{\mathbf{g}}(\mathbf{x})\), so that
    \(\mathcal{R}_k = \E[\mathbf{v}_k \mathbf{v}_k^\top]\), which is positive definite. Therefore, condition \eqref{eq:condi_g} guarantees the well-posedness of the subsequent filtering recursion.
\end{rem}

\begin{assu}
    The function $\mathbf{g}$ accomplish condition \eqref{eq:condi_g}.
\end{assu}

Following a procedure similar to the one used in \cite{Gebhard2019}, we define
\begin{equation*}
	\hat{\mu}_k = \mathbb{E} [\Phi_\X (\mathbf{x}_k) \mid \mathbf{y}_{1:k}], 
	\quad 
	\mathcal{P}_{k} = \text{Cov}(\Phi_\X(\mathbf{x}_k) - \hat{\mu}_k),
\end{equation*}
with initialization given by \(\hat{\mu}_0 = \hat{\mu}_0^- = \mathbb{E} [\Phi_\X (\mathbf{x}_0)], 
    \quad 
    \mathcal{P}_{0} = \text{Cov}(\Phi_\X (\mathbf{x}_0) - \hat{\mu}_0).\) We also define the prediction step
\begin{equation*}
	\hat{\mu}_{k+1}^- = \mathbb{E} [\Phi_\X (\mathbf{x}_{k+1}) \mid \mathbf{y}_{1:k}], 
	\quad 
	\mathcal{P}_{k+1}^- = \text{Cov}(\Phi_\X (\mathbf{x}_{k+1}) - \hat{\mu}_{k+1}^-).
\end{equation*}

\begin{prop}[On the prediction step] \label{prop:pred_step}
    The quantities $\hat{\mu}_{k+1}^-$ and $\mathcal{P}_{k+1}^-$ satisfy
    \[
        \hat{\mu}_{k+1}^- = C_{X^+|X} \hat{\mu}_k, 
        \quad 
        \mathcal{P}_{k+1}^- = C_{X^+|X} \mathcal{P}_k (C_{X^+|X})^* + \mathcal{Q}_{k+1}.
    \]
\end{prop}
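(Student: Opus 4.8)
The plan is to substitute the embedded dynamics of Proposition~\ref{prop:embed_sys}, namely $\Phi_\X(\mathbf{x}_{k+1}) = C_{X^+|X}\Phi_\X(\mathbf{x}_k) + \zeta_k$, into the definitions of $\hat{\mu}_{k+1}^-$ and $\mathcal{P}_{k+1}^-$, and then to exploit the linearity and boundedness of $C_{X^+|X}$ together with the zero-mean property of $\zeta_k$ and its lack of correlation with the current filtering error. Throughout, the key structural input is that the process noise $\mathbf{w}_k$ is independent of the state $\mathbf{x}_k$ and of the past observations $\mathbf{y}_{1:k}$, which lets us condition on $(\mathbf{x}_k,\mathbf{y}_{1:k})$ and use that $\zeta_k = \Phi_\X(\mathbf{f}(\mathbf{x}_k,\mathbf{w}_k)) - \mathbb{E}[\Phi_\X(X^+)\mid X=\mathbf{x}_k]$ is conditionally mean zero.

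\textbf{Mean identity.} Taking conditional expectation given $\mathbf{y}_{1:k}$ in the embedded dynamics and using that a bounded linear operator commutes with conditional expectation, one gets $\mathbb{E}[C_{X^+|X}\Phi_\X(\mathbf{x}_k)\mid\mathbf{y}_{1:k}] = C_{X^+|X}\hat{\mu}_k$. It then remains to check $\mathbb{E}[\zeta_k\mid\mathbf{y}_{1:k}] = 0$: since $\mathbb{E}[\zeta_k\mid\mathbf{x}_k,\mathbf{y}_{1:k}] = 0$ by the argument above, the tower property gives $\mathbb{E}[\zeta_k\mid\mathbf{y}_{1:k}] = 0$, and hence $\hat{\mu}_{k+1}^- = C_{X^+|X}\hat{\mu}_k$.

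\textbf{Covariance identity.} Using the mean identity just obtained, I would write the a priori error as
\[
\Phi_\X(\mathbf{x}_{k+1}) - \hat{\mu}_{k+1}^- = C_{X^+|X}\bigl(\Phi_\X(\mathbf{x}_k) - \hat{\mu}_k\bigr) + \zeta_k,
\]
so that $\mathcal{P}_{k+1}^-$ is the covariance operator of the sum of the propagated error and the noise. Expanding this covariance operator yields four terms: the rank-one identity $(Av)\otimes(Aw) = A(v\otimes w)A^*$, immediate from the definition of the Kronecker product and the adjoint, turns the pure-propagation term into $C_{X^+|X}\mathcal{P}_k(C_{X^+|X})^*$; the pure-noise term is $\mathcal{Q}_{k+1} := \mathrm{Cov}(\zeta_k)$; and the two cross terms vanish because, conditioning on $(\mathbf{x}_k,\mathbf{y}_{1:k})$, the factor $\Phi_\X(\mathbf{x}_k) - \hat{\mu}_k$ becomes deterministic while $\mathbb{E}[\zeta_k\mid\mathbf{x}_k,\mathbf{y}_{1:k}] = 0$, so $\mathbb{E}\bigl[(\Phi_\X(\mathbf{x}_k) - \hat{\mu}_k)\otimes\zeta_k\bigr] = 0$ and likewise for its adjoint. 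Collecting the surviving terms gives $\mathcal{P}_{k+1}^- = C_{X^+|X}\mathcal{P}_k(C_{X^+|X})^* + \mathcal{Q}_{k+1}$.

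\textbf{Main obstacle.} The computations with bounded operators and covariance operators are routine; the only genuinely delicate point is the probabilistic bookkeeping that makes precise the statement "$\zeta_k$ is conditionally mean zero and uncorrelated with the current filtering error." This must be justified by invoking the standing modeling hypothesis on the independence of the process noise from the state and from past observations, and by a careful application of the tower property — the same mechanism that underlies the classical derivation of the Kalman prediction step.
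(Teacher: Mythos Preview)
Your proof is correct and follows essentially the same route as the paper: substitute the embedded dynamics into the definitions and use that the noise $\zeta_k$ is (conditionally) mean zero and uncorrelated with the filtering error. The paper phrases the mean step as an application of the \textit{kernel Bayes rule} and for the covariance simply invokes ``independence of $\zeta_{k+1}$ with respect to $\Phi_\X(\mathbf{x}_k)$''; your conditioning argument via the tower property is in fact more careful, since $\zeta_k$ depends on $\mathbf{x}_k$ through its definition and is only \emph{conditionally} mean zero rather than independent.
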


\begin{proof}[Proof of Proposition \ref{prop:pred_step}]
    The result follows directly from the \textit{kernel} Bayes rule \cite{Fukumizu2013KernelKernels}, i.e.,
    \begin{equation*}
        \hat{\mu}_{k+1}^- 
        = \mathbb{E} \big[ \Phi_\X (\mathbf{x}_{k+1}) \mid \mathbf{y}_{1:k} \big] 
        = C_{X^+|X} \, \mathbb{E} \big[ \Phi_\X (\mathbf{x}_{k}) \mid \mathbf{y}_{1:k} \big] 
        = C_{X^+|X} \hat{\mu}_k.
    \end{equation*}
    For the covariance, using the independence of $\zeta_{k+1}$ with respect to $\Phi_\X (\mathbf{x}_k)$, we obtain
    \begin{equation*}
        \begin{aligned}
            \mathcal{P}_{k+1}^- 
            &= \text{Cov} \big( \Phi_\X(\mathbf{x}_{k+1}) - \hat{\mu}_{k+1}^- \big) \\[0.5em]
            &= C_{X^+|X} \, \text{Cov} \big( \Phi_\X (\mathbf{x}_{k}) - \hat{\mu}_{k} \big) \, C_{X^+|X}^* 
            + \text{Cov}(\zeta_{k+1}) \\[0.5em]
            &= C_{X^+|X} \, \mathcal{P}_k \, (C_{X^+|X})^* + \mathcal{Q}_{k+1}.
        \end{aligned}
    \end{equation*}
\end{proof}
This expression is the exact analogue of the prediction step in the classical Kalman filter, with $C_{X^+|X}$ acting as the state-transition operator and $\mathcal{Q}_{k+1}$ as the process noise covariance.

\begin{prop}[On the update step, adapted from Gebhardt et al. \cite{Gebhard2019}] 
\label{prop:update_step}
\label{prop:unbias_kalman_operator}
    The estimator $\hat{\mu}_k$ is unbiased for $\mu_k$ for all $k$, and the Kalman gain operator $\mathcal{K}_k: \R^p \to \H_\X$ is given by
    \begin{equation*}
        \mathcal{K}_k = \mathcal{P}^-_{k} C_{Y|X}^* \big( C_{Y|X} \mathcal{P}^-_{k} C_{Y|X}^* + \mathcal{R}_k \big)^{-1}.
    \end{equation*}
    Moreover, the posterior error covariance operator satisfies \( \mathcal{P}_k = (I - \mathcal{K}_k C_{Y|X}) \, \mathcal{P}^-_{k}\).
\end{prop}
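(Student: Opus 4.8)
The plan is to reproduce, in the Hilbert space $\H_\X$, the classical recursive–least–squares derivation of the Kalman update that underlies Algorithm \ref{alg:KF}, now with the conditional embedding operator $C_{Y|X}$ playing the role of the observation operator and $\mathcal{R}_k$ the role of the measurement–noise covariance, in the spirit of \cite{Gebhard2019}. I would start by positing the affine update $\hat{\mu}_k = \hat{\mu}_k^- + \mathcal{K}_k(\mathbf{y}_k - C_{Y|X}\hat{\mu}_k^-)$ for a bounded linear operator $\mathcal{K}_k : \R^p \to \H_\X$ to be determined, and abbreviate the a priori and a posteriori errors by $e_k^- = \Phi_\X(\mathbf{x}_k) - \hat{\mu}_k^-$ and $e_k = \Phi_\X(\mathbf{x}_k) - \hat{\mu}_k$; by Proposition \ref{prop:embed_sys} one has $e_k = (I - \mathcal{K}_k C_{Y|X})\,e_k^- - \mathcal{K}_k\nu_k$.

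First I would prove unbiasedness by induction on $k$. The base case is the initialization $\mathbb{E}[\hat{\mu}_0] = \mathbb{E}[\Phi_\X(\mathbf{x}_0)]$; in the inductive step, Proposition \ref{prop:pred_step} together with $\mathbb{E}[\zeta_k] = 0$ (Proposition \ref{prop:embed_sys}) gives $\mathbb{E}[\hat{\mu}_k^-] = C_{X^+|X}\,\mathbb{E}[\hat{\mu}_{k-1}] = \mathbb{E}[\Phi_\X(\mathbf{x}_k)]$, and since the innovation is $\mathbf{y}_k - C_{Y|X}\hat{\mu}_k^- = C_{Y|X}e_k^- + \nu_k$ with $\mathbb{E}[\nu_k] = 0$, the correction term has zero mean for every choice of $\mathcal{K}_k$; hence $\mathbb{E}[e_k] = 0$. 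In particular $\mathcal{P}_k = \Cov(e_k) = \mathbb{E}[e_k \otimes e_k]$, and because $e_k^-$ and $\nu_k$ are uncorrelated — conditioning on $(\mathbf{x}_k, \mathbf{y}_{1:k-1})$ makes $e_k^-$ deterministic while $\nu_k$ has conditional mean zero, so $\mathbb{E}[e_k^- \otimes \nu_k] = 0$ by the tower property — expanding the bilinear form yields the Joseph form
\[
\mathcal{P}_k = (I - \mathcal{K}_k C_{Y|X})\,\mathcal{P}_k^-\,(I - \mathcal{K}_k C_{Y|X})^* + \mathcal{K}_k\,\mathcal{R}_k\,\mathcal{K}_k^*,
\]
valid for any admissible $\mathcal{K}_k$.

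Next I would minimize over $\mathcal{K}_k$. The innovation covariance $\mathcal{S}_k := C_{Y|X}\mathcal{P}_k^- C_{Y|X}^* + \mathcal{R}_k$ is a $p \times p$ matrix (here finite–dimensionality of $\H_\Y = (\R^p)^*$ is used) which is positive definite, being the sum of the positive semi–definite $C_{Y|X}\mathcal{P}_k^- C_{Y|X}^*$ and the positive definite $\mathcal{R}_k$ (Assumption 2, i.e. condition \eqref{eq:condi_g}), hence invertible. Expanding the Joseph form and completing the square about $\mathcal{K}_k^\star := \mathcal{P}_k^- C_{Y|X}^*\mathcal{S}_k^{-1}$ gives
\[
\mathcal{P}_k = (\mathcal{K}_k - \mathcal{K}_k^\star)\,\mathcal{S}_k\,(\mathcal{K}_k - \mathcal{K}_k^\star)^* + \mathcal{P}_k^- - \mathcal{P}_k^- C_{Y|X}^*\mathcal{S}_k^{-1}C_{Y|X}\mathcal{P}_k^-.
\]
Since $\mathcal{S}_k \succ 0$, the first summand is positive semi–definite and vanishes, in the Loewner order and hence also in mean–square error, exactly when $\mathcal{K}_k = \mathcal{K}_k^\star$; this identifies the minimum–variance gain with the stated formula $\mathcal{K}_k = \mathcal{P}_k^- C_{Y|X}^*(C_{Y|X}\mathcal{P}_k^- C_{Y|X}^* + \mathcal{R}_k)^{-1}$, and substituting it back leaves $\mathcal{P}_k = \mathcal{P}_k^- - \mathcal{K}_k C_{Y|X}\mathcal{P}_k^- = (I - \mathcal{K}_k C_{Y|X})\mathcal{P}_k^-$. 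Under the standing modelling hypotheses that make the optimal linear estimator coincide with the conditional mean (as in the linear case), this $\hat{\mu}_k$ is exactly $\mathbb{E}[\Phi_\X(\mathbf{x}_k) \mid \mathbf{y}_{1:k}]$.

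The step I expect to be the main obstacle is making the minimization rigorous at the operator level: one must fix the precise sense of a "smallest" $\mathcal{P}_k$ (the Loewner order, equivalently minimal trace, equivalently minimal MSE), verify that the completion–of–squares algebra is legitimate — which it is, since $\mathcal{S}_k$ and its inverse live in the finite–dimensional $\R^{p\times p}$ while $\mathcal{P}_k^-$ and $C_{Y|X}^*$ stay genuine Hilbert–space objects, so the manipulation reduces to ordinary matrix algebra once $\mathcal{S}_k^{-1}$ is factored out — and justify the identification of the best linear estimator with the conditional expectation defining $\hat{\mu}_k$. Everything else is the familiar Kalman bookkeeping; the two structural facts it rests on are the uncorrelatedness $\mathbb{E}[e_k^- \otimes \nu_k] = 0$ and the positive definiteness of $\mathcal{R}_k$ provided by Assumption 2.
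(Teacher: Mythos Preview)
Your proof is correct and follows the same scaffold as the paper's: posit the affine update, establish unbiasedness by induction (using $\E[\zeta_k]=\E[\nu_k]=0$), derive the Joseph form from the uncorrelatedness of the a~priori error and $\nu_k$, optimize over $\mathcal{K}_k$, and simplify to $(I-\mathcal{K}_kC_{Y|X})\mathcal{P}_k^-$. The one substantive difference is in the optimization step: the paper minimizes $\mathrm{Tr}(\mathcal{P}_k)$ by writing down the first-order condition $-(I-\mathcal{K}_kC_{Y|X})\mathcal{P}_k^-C_{Y|X}^*+\mathcal{K}_k\mathcal{R}_k=0$ and solving it, whereas you complete the square about $\mathcal{K}_k^\star=\mathcal{P}_k^-C_{Y|X}^*\mathcal{S}_k^{-1}$ directly in the Loewner order. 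Your route is arguably cleaner here, since it sidesteps having to justify differentiation with respect to the operator $\mathcal{K}_k:\R^p\to\H_\X$ and delivers the stronger conclusion that $\mathcal{K}_k^\star$ minimizes $\mathcal{P}_k$ in the operator order (hence a fortiori its trace); the paper's calculus-style argument is the more traditional Kalman derivation. Both rely on exactly the same two structural ingredients you isolate---$\E[e_k^-\otimes\nu_k]=0$ and the positive definiteness of $\mathcal{R}_k$ from condition~\eqref{eq:condi_g}---and both land on the same gain and posterior covariance.
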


\begin{proof}[Proof of Proposition \ref{prop:update_step}]
    Let $\varepsilon_k^- = \hat{\mu}_k^- - \mu_k \in \H_\X$ denote the a priori error and $\varepsilon_k^+ = \hat{\mu}_k - \mu_k \in \H_\X$ the a posteriori error. Then,
    \begin{align*}
        \varepsilon_k^+ 
                &= \hat{\mu}_{k}^- + \K_{k} (\mathbf{y}_{k} - C_{Y|X} \hat{\mu}_{k}^-) - \mu_k \\
                &= \varepsilon_k^- + \K_{k} \mathbf{y}_{k} - \K_k C_{Y|X} \hat{\mu}_{k}^- + \K_k C_{Y|X} \mu_k - \K_k C_{Y|X} \mu_k \\
                &= ( I - \K_k C_{Y|X} ) \varepsilon_k^- + \K_{k} (\mathbf{y}_{k} - C_{Y|X} \mu_k) \\
                &= ( I - \K_k C_{Y|X} ) \varepsilon_k^- + \K_{k} \nu_k.
    \end{align*}

    \textbf{Step 1: Unbiasedness.}  
    We first prove that $\E[\varepsilon_k^-] = 0$ by induction. For $k=0$, this holds by construction, establishing the base case. Assume it holds for some $k \in \N$, and consider $k+1$. Since 
    \[
        \varepsilon_{k+1}^- = C_{X^+|X} \hat{\mu}_k - C_{X^+|X} \mu_k - \zeta_k,
    \]
    it follows that
    \[
        \E[\varepsilon_{k+1}^-] = C_{X^+|X} \E[\hat{\mu}_k - \mu_k] - \E[\zeta_k] = C_{X^+|X} \E[\varepsilon^+_k],
    \]
    as the random variables $\zeta_k$ are centered.  
    Moreover,
    \[
        \E[\varepsilon^+_k] = ( I - \K_k C_{Y|X} ) \E[\varepsilon_k^-] + \K_{k} \E[\nu_k] = 0,
    \]
    since $\nu_k$ is centered and $\E[\varepsilon_k^-]=0$ by the induction hypothesis. Hence, by induction, $\E[\varepsilon_k^-] = \E[\varepsilon^+_k] = 0$ for all $k$, which proves that $\hat{\mu}_k$ is an unbiased estimator of $\mu_k$.

    \textbf{Step 2: Optimal gain operator.}  
    In the filtering problem, the goal is to minimize the expected quadratic loss
    \[
        \E[\langle \hat{\mu}_{k} - \mu_{k},  \hat{\mu}_{k} - \mu_{k} \rangle ] 
        = \E [ \langle \varepsilon_k^+, \varepsilon_k^+ \rangle ].
    \]
    Since the estimator is unbiased, this is equivalent to minimizing the trace of the posterior covariance operator:
    \[
        \min_{\K_k} \E[\langle \varepsilon_k^+, \varepsilon_k^+ \rangle] 
        = \min_{\K_k} \text{Tr}(\mathcal{P}_{k}).
    \]

    Substituting the expression for $\varepsilon_k^+$, we obtain
    \begin{align*}
        \mathcal{P}_k 
        &= \E\Big[ \big((I - \K_k C_{Y|X})\varepsilon^-_k - \K_k \nu_k \big) 
                 \otimes \big((I - \K_k C_{Y|X})\varepsilon^-_k - \K_k \nu_k \big) \Big] \\
        &= (I - \K_k C_{Y|X}) \, \E[\varepsilon^-_k \otimes \varepsilon^-_k] (I - \K_k C_{Y|X})^* 
         - \K_k \E[\nu_k \otimes \varepsilon^-_k](I - \K_k C_{Y|X})^* \\
        &\quad - (I - \K_k C_{Y|X}) \E[\varepsilon^-_k \otimes \nu_k] \K_k^* 
         + \K_k \E[\nu_k \otimes \nu_k] \K_k^*.
    \end{align*}

    By independence of $\nu_k$ and $\varepsilon_k^-$, and since both are centered, the cross-terms vanish. Thus,
    \[
        \mathcal{P}_k = (I - \K_k C_{Y|X}) \mathcal{P}^-_{k} (I - \K_k C_{Y|X})^* + \K_k \mathcal{R}_k \K_k^*,
    \]
    where $\mathcal{R}_k = \E[\nu_k \nu_k^*]$.

    Minimizing the trace with respect to $\K_k$ yields the first-order condition
    \begin{equation}
        -(I - \K_k C_{Y|X})\mathcal{P}^-_{k}C_{Y|X}^* + \K_k \mathcal{R}_k = 0,
        \label{eq:kalman_gain_rel}
    \end{equation}
    or equivalently,
    \[
        \K_k \big( C_{Y|X}\mathcal{P}^-_{k}C_{Y|X}^* + \mathcal{R}_k \big) = \mathcal{P}^-_{k}C_{Y|X}^*.
    \]

    Since $\mathcal{R}_k$ is symmetric positive definite and $C_{Y|X}\mathcal{P}^-_{k}C_{Y|X}^*$ is symmetric positive semidefinite, the operator inside the parentheses is invertible. Hence,
    \[
        \K_k = \mathcal{P}^-_{k} C_{Y|X}^* \big( C_{Y|X} \mathcal{P}^-_{k} C_{Y|X}^* + \mathcal{R}_k \big)^{-1}.
    \]

    Finally, substituting this expression into the covariance update and using relation \eqref{eq:kalman_gain_rel} gives \(\mathcal{P}_k = (I - \K_k C_{Y|X}) \, \mathcal{P}^-_{k}.\)
\end{proof}

Thus, with closed-form expressions for the Kalman gain operator and the error covariance operators, the recursive equations can be written in terms of the Koopman operator. Using that \(C_{X^+|X} = \U^*\) and \(C_{Y|X} = \G^*\), we obtain
\begin{equation*}
	\begin{aligned}
		\hat{\mu}_{k+1}^- & = \U^* \hat{\mu}_{k}, 
		& \mathcal{P}_{k+1}^- & = \U^* \mathcal{P}_k \U + \mathcal{Q}_{k+1}, \\
		\S_{k+1} & = \G^* \mathcal{P}_{k+1}^- \G + \mathcal{R}_{k+1}, 
		& \K_{k+1} & = \mathcal{P}_{k+1}^- \G \S_{k+1}^{-1}, \\
		\mathcal{P}_{k+1} & = (I - \K_{k+1} \G^*) \mathcal{P}_{k+1}^-, 
		& \hat{\mu}_{k+1} &= \U^* \hat{\mu}_k + \K_{k+1} (\mathbf{y}_{k+1} - \G^* \hat{\mu}_{k+1}^-).
	\end{aligned}
\end{equation*}

We now introduce finite-dimensional approximations based on the developments of the previous section:
\begin{equation*}
	\begin{aligned}
		\hat{\mu}_{N, k+1}^- & = \U^*_N \hat{\mu}_{N, k}, 
		&\mathcal{P}_{N, k+1}^- & = \U^*_N \mathcal{P}_{N,k} \U_N + \mathcal{Q}_{N, k+1}, \\
        \S_{N, k+1} & = \G_N^* \mathcal{P}_{N,k+1}^- \G_N + \mathcal{R}_{N,k+1},
		& \K_{N,k+1} & = \mathcal{P}_{N, k+1}^- \G_N (\G^*_N \mathcal{P}_{N, k+1}^- \G_N + \mathcal{R}_{N, k+1})^{-1}, \\
		\mathcal{P}_{N, k+1} & = (I - \K_{N,k+1} \G_N^*) \mathcal{P}_{N,k+1}^-, 
		& \hat{\mu}_{N,k+1} &= \U_N^* \hat{\mu}_{N,k} + \K_{N,k+1} (\mathbf{y}_{k+1} - \G^*_N \hat{\mu}_{N,k+1}^-).
	\end{aligned}
\end{equation*}

Here, $\mathcal{Q}_{N,k+1}$ and $\mathcal{R}_{N,k+1}$ denote unbiased estimators of $\mathcal{Q}_{k+1}$ and $\mathcal{R}_{k+1}$, respectively, i.e.,
\begin{equation}
	\mathcal{Q}_{N,k+1} = \frac{1}{N-1}\sum_{j=1}^N (z_{1,j} - \bar{z}_1) \otimes (z_{1,j} - \bar{z}_1), \quad 
    \mathcal{R}_{N,k+1} = \frac{1}{N-1}\sum_{j=1}^N (z_{2,j} - \bar{z}_2)  (z_{2,j} - \bar{z}_2)^\top,
	\label{eq: emp_covars}
\end{equation}
where $\{ z_{1,j} \}_{j=1}^N \sim \zeta_k^N$, $\{ z_{2,j} \}_{j=1}^N \sim \nu_k^N$, and \(\bar{z}_i = \frac{1}{N} \sum_{j=1}^N z_{i,j}\). Finally, if $X_0$ is the distribution of the initial condition and $\{ x_j \}_{j=1}^N \sim X_0$, the initialization is given by
\begin{equation}
	\hat{\mu}_{N,0} = \frac{1}{N} \sum_{j=1}^N \Phi_\X(x_{j}), \quad 
    \mathcal{P}_{N,0} = \frac{1}{N-1} \sum_{j=1}^N (\Phi_\X(x_{j}) - \hat{\mu}_{N,0}) \otimes (\Phi_\X(x_{j}) - \hat{\mu}_{N,0}).
	\label{eq: mean_element_covar}
\end{equation}

With this, the derivation of the filter is complete, both in infinite dimension and in its finite-dimensional representation. For the latter, its pseudocode is presented in Algorithm \ref{alg:KKF}, that leads to an algorithm with complexity \(O(T\cdot N^3)\) in time, due to that the algorithm is just matrix multiplication in each iteration of the algorithm, and \(O(T\cdot N^2)\) in space, since for each iteration we have to store matrices of size \(N^2\).

\begin{algorithm}[ht]
\caption{Kalman Koopman Filter (KKF)}\label{alg:KKF}
\begin{algorithmic}[1]
\State \textbf{Input:} Discrete dynamics, $\mathbf{x}_0$ prior on the initial condition, $\mathbf{y}_{1:T}$ observations, $\mathbf{k}:\X \times \X \to \R$ positive definite \textit{kernel}, $N$ dimension of the Koopman operator approximation, and $n_{\text{samples}}$ number of samples to approximate the covariance matrices. 
\State \textbf{Output:} $(\hat{\mathbf{x}}_{k|k})_{k=0}^{T}$ trajectory estimator and $(\hat{\mathbf{P}}^{\mathbf{x}}_{k|k})_{k=0}^{T}$ error covariance matrices. \\
\textbf{Initialization:}
\State $\mathbf{U}_N, \, \Phi_\X (\cdot), \, \mathbf{G}_N, \mathbf{B}_N \gets $ kEDMD($\mu_\X$, $\rho_f$, $\rho_g$, $k$, $N$)
\State $\hat{\mathbf{x}}_{0|0}   \gets \E [\mathbf{x}_0]$ \Comment{Initial condition estimate for $\mathbf{x}$}
\State $\hat{\mathbf{z}}_{0|0}   \gets \mathbf{\Phi}_\X(\hat{\mathbf{x}}_{0|0})$ \Comment{Initial condition estimate for $\mathbf{z}$}
\State $\hat{\mathbf{P}}^\mathbf{x}_{0|0} \gets \E [(\mathbf{x}_0 - \hat{\mathbf{x}}_{0})(\mathbf{x}_0 - \hat{\mathbf{x}}_{0})^\top]$ \Comment{Initial error covariance for $\mathbf{x}$}
\State $\hat{\mathbf{P}}^\mathbf{z}_{0|0} \gets$ Cov($\mathbf{x}_0$, $n_{\text{samples}}, \mathbf{\Phi}_\X$) \Comment{Initial error covariance for $\mathbf{z}$}
\For{$k = 0, \dots, N-1$} \\
    \textbf{Prediction:}
    \State $\hat{\mathbf{x}}_{k+1|k} \gets \E[\mathbf{f}(\hat{\mathbf{x}}_{k|k}, \cdot)]$
    \Comment{A priori estimate for $\mathbf{x}$}
    \State $\hat{\mathbf{z}}_{k+1|k} \gets \mathbf{\Phi}_\X(\hat{\mathbf{x}}_{k+1|k})$
    \Comment{A priori estimate for $\mathbf{z}$}
    \State $\mathbf{Q}_k \gets $ Cov($\mathbf{w}_k$, $n_{\text{samples}}, \mathbf{\Phi}_\X(\mathbf{f}(\hat{\mathbf{x}}_{k|k}, \cdot))$) 
    \Comment{Empirical dynamics covariance for $\mathbf{z}$}
    \State $\mathbf{P}^{\mathbf{z}}_{k+1|k} \gets \mathbf{U} \mathbf{P}^{\mathbf{z}}_{k|k} \mathbf{U}^\top + \mathbf{Q}_k$
    \Comment{A priori error covariance} \\
    \textbf{Update:}
    \State $\hat{\mathbf{y}}_{k+1} \gets \E[\mathbf{g}(\hat{\mathbf{x}}_{k+1|k}, \cdot)]$ 
    \Comment{A priori observation estimate}
    \State $\mathbf{e}_{k+1} \gets \mathbf{y}_{k+1} - \hat{\mathbf{y}}_{k+1}$
    \Comment{A priori error (innovation)}
    \State $\mathbf{R}_{k+1} \gets $ Cov($\mathbf{v}_k$, $n_{\text{samples}}, \mathbf{g}(\hat{\mathbf{x}}_{k+1|k}, \cdot)$) 
    \Comment{Empirical observation covariance for $\mathbf{z}$}
    \State $ \mathbf{S}_{k+1} \gets \mathbf{C} \mathbf{P}^{\mathbf{z}}_{k|k} \mathbf{C}^\top + \mathbf{R}_{k+1}$
    \Comment{Residual covariance for $\mathbf{z}$}
    \State $\mathbf{K}_{k+1} \gets \mathbf{P}^{\mathbf{z}}_{k+1|k} \mathbf{C}^\top$Cholesky$(\mathbf{S}_{k+1})^{-1}$
    \Comment{Kalman gain}
    \State $\hat{\mathbf{z}}_{k+1|k+1} \gets \hat{\mathbf{z}}_{k+1|k} + \mathbf{K}_{k+1} \mathbf{e}_{k+1}$
    \Comment{A posteriori estimate for $\mathbf{z}$}
    \State $\hat{\mathbf{x}}_{k+1|k+1} \gets \mathbf{B}\hat{\mathbf{z}}_{k+1|k+1}$
    \Comment{\textit{Lift back} for the state}
    \State $\mathbf{P}^\mathbf{z}_{k+1|k+1} \gets (\mathbf{I} - \mathbf{K}_{k+1} 
    \mathbf{C}) \mathbf{P}^{\mathbf{z}}_{k+1|k}$
    \Comment{A posteriori error covariance for $\mathbf{z}$}
    \State $\mathbf{P}^\mathbf{x}_{k+1|k+1} \gets \mathbf{B}\mathbf{P}^\mathbf{z}_{k+1|k+1} \mathbf{B}^\top$
    \Comment{\textit{Lift back} for the covariance}
\EndFor
\end{algorithmic}
\end{algorithm}

\subsection{Bound error for the filtering algorithm}

The first step in deriving the error bound is to express the discrepancy in the norm of the elements to be compared, at a given time $k$, in terms of the previous step $k-1$ and the involved operators.

\begin{prop} \label{prop:err_KKF_1}
    For $k \geq 1$, there exist constants $c_{k,j}^i > 0$ with $j \in \{ 1, \dots, 6\}$ and $i \in \{ 1, 2\}$ such that
    \begin{equation*}
        \begin{aligned}
            \| \hat{\mu}_{k} - \hat{\mu}_{N,k}  \| \leq & \; c_{1,k}^1 \| \U - \U_N \| +  c_{2,k}^1 \| \G - \G_N \| + c_{3,k}^1 \| \mathcal{Q}_{k} - \mathcal{Q}_{N,k} \| \\ 
            & + c_{4,k}^1 \| \mathcal{R}_{k} - \mathcal{R}_{N,k} \| + c_{5,k}^1 \| \hat{\mu}_{k-1} - \hat{\mu}_{N,k-1} \| + c_{6,k}^1 \| \mathcal{P}_{k-1} - \mathcal{P}_{N,k-1} \|,
        \end{aligned}
    \end{equation*}
    \begin{equation*}
        \begin{aligned}
            \| \mathcal{P}_{k} - \mathcal{P}_{N,k} \| \leq & \; c_{1,k}^2 \| \U - \U_N \| +  c_{2,k}^2 \| \G - \G_N \| + c_{3,k}^2 \| \mathcal{Q}_{k} - \mathcal{Q}_{N,k} \| \\
            & + c_{4,k}^2 \| \mathcal{R}_{k} - \mathcal{R}_{N,k} \|  + c_{5,k}^2 \| \hat{\mu}_{k-1} - \hat{\mu}_{N,k-1} \| + c_{6,k}^2 \| \mathcal{P}_{k-1} - \mathcal{P}_{N,k-1} \|.
        \end{aligned}
    \end{equation*}
    Moreover, the constants $c_{k,j}^i$ depend on $k$ only through the quantities 
    \(\| \U \|\), \(\| \G \|\), \( \| \mathcal{Q}_{k} \|\), \(\| \mathcal{R}_{k} \|\), \(\| \S_{k}^{-1} \|\), \(\| \hat{\mu}_{k-1} \|\), and \(\| \mathcal{P}_{k-1} \|\).
\end{prop}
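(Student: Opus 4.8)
The plan is to read Proposition~\ref{prop:err_KKF_1} as a direct specialization of the Kalman error decomposition, Theorem~\ref{theo:kalman_error}. I would instantiate system~$1$ as the exact infinite–dimensional recursion, with $A_{1,k}=\U^{*}$, $C_{1,k}=\G^{*}$, noise covariances $\mathcal{Q}_{1,k}=\mathcal{Q}_{k}$, $\mathcal{R}_{1,k}=\mathcal{R}_{k}$, observations $y_{1,k}=\mathbf{y}_{k}$, and iterates $\hat{\mu}_{1,k}=\hat{\mu}_{k}$, $\mathcal{P}_{1,k}^{+}=\mathcal{P}_{k}$; and system~$2$ as the kEDMD recursion, with $A_{2,k}=\U_{N}^{*}$, $C_{2,k}=\G_{N}^{*}$, $\mathcal{Q}_{2,k}=\mathcal{Q}_{N,k}$, $\mathcal{R}_{2,k}=\mathcal{R}_{N,k}$, $y_{2,k}=\mathbf{y}_{k}$, $\hat{\mu}_{2,k}=\hat{\mu}_{N,k}$, $\mathcal{P}_{2,k}^{+}=\mathcal{P}_{N,k}$. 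Both recursions, exactly as written in the previous subsection, have the ``Kalman rule'' form required by Theorem~\ref{theo:kalman_error} (with time–independent transition operators, since the dynamics are autonomous). The one structural simplification relative to the general statement is that \emph{both} filters are driven by the \emph{same} observation stream $\mathbf{y}_{1:T}$, so $y_{1,k}=y_{2,k}$ and the innovation–discrepancy contribution $c^{i}_{5,k}\|y_{1,k}-y_{2,k}\|$ vanishes; this is precisely why only six terms survive and the index set is $j\in\{1,\dots,6\}$.

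Before applying Theorem~\ref{theo:kalman_error} I would verify its single hypothesis, namely invertibility of the residual covariance operators $\S_{k}=\G^{*}\mathcal{P}_{k}^{-}\G+\mathcal{R}_{k}$ and $\S_{N,k}=\G_{N}^{*}\mathcal{P}_{N,k}^{-}\G_{N}+\mathcal{R}_{N,k}$. For the exact filter this is immediate from Assumption~2 together with condition~\eqref{eq:condi_g}: $\mathcal{R}_{k}$ is symmetric positive definite and $\G^{*}\mathcal{P}_{k}^{-}\G$ is symmetric positive semidefinite, so $\S_{k}$ is boundedly invertible, exactly as in the derivation of the Kalman gain in Proposition~\ref{prop:update_step}. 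For the empirical filter the same argument applies once $\mathcal{R}_{N,k}$ is positive definite, which holds almost surely whenever the number of samples in \eqref{eq: emp_covars} exceeds $p$ (a sample covariance of an absolutely continuous $\R^{p}$–valued variable), and which may also be added to the standing assumptions. With invertibility in hand, Theorem~\ref{theo:kalman_error} applies.

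Next I would translate its conclusion into the present notation. Since taking adjoints is an isometry for the operator norm, $\|A_{1,k}-A_{2,k}\|=\|\U^{*}-\U_{N}^{*}\|=\|\U-\U_{N}\|$ and $\|C_{1,k}-C_{2,k}\|=\|\G-\G_{N}\|$, while the other three differences map term by term to $\|\mathcal{Q}_{k}-\mathcal{Q}_{N,k}\|$, $\|\mathcal{R}_{k}-\mathcal{R}_{N,k}\|$, $\|\hat{\mu}_{k-1}-\hat{\mu}_{N,k-1}\|$, $\|\mathcal{P}_{k-1}-\mathcal{P}_{N,k-1}\|$. Relabelling the constants $c^{i}_{j,k}$ accordingly yields the two displayed inequalities.

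The delicate point, and the step I expect to need the most care, is the assertion about the \emph{dependence} of the constants. Theorem~\ref{theo:kalman_error} only guarantees that each $c^{i}_{j,k}$ is a finite combination of sums and products of the norms of \emph{both} systems and of the observations, i.e.\ also of $\|\U_{N}\|$, $\|\G_{N}\|$, $\|\mathcal{Q}_{N,k}\|$, $\|\mathcal{R}_{N,k}\|$, $\|\S_{N,k}^{-1}\|$, $\|\hat{\mu}_{N,k-1}\|$, $\|\mathcal{P}_{N,k-1}\|$ and $\|\mathbf{y}_{k}\|$, whereas the proposition lists only the exact–filter quantities. To close this gap I would bound each empirical norm by its exact counterpart plus an approximation error that tends to $0$ as $N\to\infty$ (this is the kEDMD consistency exploited in the sequel, cf.\ \cite{olguin2025improvederrorboundskoopman}): for $N$ large enough the errors are at most a fixed constant, so $\|\U_{N}\|\le\|\U\|+1$, $\|\G_{N}\|\le\|\G\|+1$, and similarly for $\mathcal{Q}$, $\mathcal{R}$, $\hat{\mu}_{k-1}$, $\mathcal{P}_{k-1}$, while a Neumann–series estimate gives $\|\S_{N,k}^{-1}\|\le 2\|\S_{k}^{-1}\|$ once $\|\S_{N,k}-\S_{k}\|\le(2\|\S_{k}^{-1}\|)^{-1}$; the observations $\mathbf{y}_{k}$ are treated as given, hence bounded, data. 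Substituting these bounds makes every constant depend on $k$ only through $\|\U\|$, $\|\G\|$, $\|\mathcal{Q}_{k}\|$, $\|\mathcal{R}_{k}\|$, $\|\S_{k}^{-1}\|$, $\|\hat{\mu}_{k-1}\|$ and $\|\mathcal{P}_{k-1}\|$. Checking that $\|\S_{N,k}-\S_{k}\|$ is in fact controlled by the listed error terms — which reduces to the a~priori bound $\|\mathcal{P}_{k}^{-}\|\le\|\U\|^{2}\|\mathcal{P}_{k-1}\|+\|\mathcal{Q}_{k}\|$ and its empirical analogue together with the estimates already produced in the proof of Theorem~\ref{theo:kalman_error} — is the routine but somewhat lengthy bookkeeping that remains.
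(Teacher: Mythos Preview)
Your approach is exactly the paper's: instantiate Theorem~\ref{theo:kalman_error} with $A_{1,k}=\U^{*}$, $A_{2,k}=\U_{N}^{*}$, $C_{1,k}=\G^{*}$, $C_{2,k}=\G_{N}^{*}$, matching noise covariances, and the shared observation stream $y_{1,k}=y_{2,k}=\mathbf{y}_{k}$, so that the observation–discrepancy term drops and only six contributions remain. Your additional care on the invertibility of $\S_{N,k}$ and on reducing the constants' dependence to the exact–filter quantities alone in fact goes beyond the paper's own proof, which simply invokes Theorem~\ref{theo:kalman_error} and notes that $\mathcal{R}_{k}$ is positive definite without addressing either point explicitly.
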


\begin{proof}[Proof of Proposition \ref{prop:err_KKF_1}]
    The result follows directly from Theorem~\ref{theo:kalman_error}, noting that $\mathcal{S}_k$ is invertible since $\mathcal{R}_k$ is symmetric positive definite.  
    In this setting we identify
    \begin{equation*}
        A_{1,k} = \U^*, \quad A_{2,k} = \U_N^*, \quad 
        C_{1,k} = \G^*, \quad C_{2,k} = \G_N^*, \quad  
        \mathcal{Q}_{1,k} = \mathcal{Q}_k, \quad \mathcal{Q}_{2,k} = \mathcal{Q}_{N,k},
    \end{equation*}
    \begin{equation*}
        \mathcal{R}_{1,k} = \mathcal{R}_k, \quad \mathcal{R}_{2,k} = \mathcal{R}_{N,k}, \quad 
        y_{1,k} = y_{2,k} = \mathbf{y}_k.
    \end{equation*}
    That is, there is no error due to discrepancies in the observations, since both systems are assumed to share the same observation sequence.
\end{proof}

Now, in a previous work \cite{olguin2025improvederrorboundskoopman}, we prove a bound for the Koopman operator approximation via kEDMD, that is of order \(N^{-1/2}\) in probability and will be key in the main theorems of this work. We recall this result below.

\begin{lem}[Bound for kEDMD]
\label{lemma:kedmd_bound}
Let \( \delta \in (0, 1) \) and \( N \in \mathbb{N} \) such that
\[
\delta > 2 \exp \left( -\frac{N c_1^2}{8\|k\|_\infty} \right),
\]
where \( c_1 \) is the coercivity constant of \( C_X \), that is \(\| C_X \psi\| \geq c_1 \| \psi\|, \, \forall \psi \in \H_\X.\) Then, if \( \U \H_\X \subset \H_\X \), \( \G (\R^p)^* \subseteq \H_\X \) and \(\B (\R^n)^* \subseteq \H_\X\), it holds with probability at least \( (1 - \delta)^4 \) that
\begin{equation*}
    \| \U - \U_N \|, \,\| \G - \G_N \|, \, \| \B - \B_N \| \leq C_\delta N^{-1/2},
\end{equation*}
where
\[
C_\delta = \left( \frac{2}{c_1} + \frac{\sqrt{\|k\|_\infty}}{c_1^2} \right)
\sqrt{8 \|k\|_\infty \ln \left( \frac{2}{\delta} \right)}.
\]
\end{lem}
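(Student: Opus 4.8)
The statement is the kEDMD error bound of \cite{olguin2025improvederrorboundskoopman}, and the plan is to reconstruct its proof. The backbone is Proposition~\ref{prop:covariance_koopman} together with its empirical analogue: each of the three operators is written in the common form $\U = C_X^{-1}C_{XX^+}$, $\G = C_X^{-1}C_{XY}$, $\B = C_X^{-1}C_{XB}$ (with $C_{XB}=\E[\Phi_\X(X)\otimes X]$ the state-recovery cross-covariance), and the approximations as $\U_N=(C_X^N)^{-1}C_{XX^+}^N$, and so on. The three invariance hypotheses $\U\H_\X\subset\H_\X$, $\G(\R^p)^*\subseteq\H_\X$, $\B(\R^n)^*\subseteq\H_\X$ are precisely what make $\U,\G,\B$ bounded operators into $\H_\X$ (so that the operator-norm differences are meaningful), once $\U_N,\G_N,\B_N$ are read as operators on $\H_\X$ — for instance by post-composing with the orthogonal projection $P_N$ onto $\H_{\X,N}$. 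The proof then splits into a deterministic perturbation step and a probabilistic concentration step.

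\emph{Perturbation step.} For a generic pair write $D=C_X$, $D_N=C_X^N$, $E=C_{X\bullet}$, $E_N=C_{X\bullet}^N$. From $D_N^{-1}-D^{-1}=D_N^{-1}(D-D_N)D^{-1}$ one gets
\[
D^{-1}E-D_N^{-1}E_N = D^{-1}(E-E_N) + D_N^{-1}(D-D_N)D^{-1}E_N,
\]
so, using $\|D^{-1}E_N\|\le\|D^{-1}E\|+\|D^{-1}\|\,\|E-E_N\|$ and recognising $\|D^{-1}E\|\in\{\|\U\|,\|\G\|,\|\B\|\}$, the whole task reduces to (a) the bound $\|D^{-1}\|=\|C_X^{-1}\|\le 1/c_1$ from coercivity, (b) a bound on $\|D_N^{-1}\|=\|(C_X^N)^{-1}\|$, and (c) the four scalar fluctuations $\|C_X-C_X^N\|$, $\|C_{XX^+}-C_{XX^+}^N\|$, $\|C_{XY}-C_{XY}^N\|$, $\|C_{XB}-C_{XB}^N\|$.

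\emph{Concentration step.} Each empirical operator is an average of $N$ i.i.d.\ rank-one Hilbert--Schmidt operators whose Hilbert--Schmidt norm is at most $\|k\|_\infty$ (use $k_\X(x,x)\le\|k\|_\infty$ for $C_X^N$; for the cross terms the second feature factor is uniformly bounded by compactness of $\X$ and $\Y$). A Hoeffding/McDiarmid inequality in Hilbert--Schmidt space then yields
\[
\P\bigl(\|C_X-C_X^N\|>t\bigr)\le 2\exp\!\Bigl(-\tfrac{Nt^2}{8\|k\|_\infty}\Bigr),
\]
and likewise for the three cross-covariances. Taking $t=\eps_N:=\sqrt{8\|k\|_\infty\ln(2/\delta)/N}$ makes each event have probability $\ge 1-\delta$; moreover the hypothesis $\delta>2\exp(-Nc_1^2/(8\|k\|_\infty))$ is exactly $\eps_N<c_1$, so on the first event $\|C_X-C_X^N\|<c_1$, hence $C_X^N$ stays coercive and $\|(C_X^N)^{-1}\|$ is controlled by a constant multiple of $1/c_1$ — this is the origin of the $2/c_1$ in $C_\delta$. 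Intersecting the four favorable events (if the four empirical operators are formed on independent sample sets, intersecting gives the product bound $(1-\delta)^4$; otherwise a union bound gives the comparable $1-4\delta$) and substituting $\eps_N$ and the bound on $\|(C_X^N)^{-1}\|$ into the perturbation identity, the coefficient of $N^{-1/2}$ in the resulting estimate collects into the two summands of $C_\delta=\bigl(2/c_1+\sqrt{\|k\|_\infty}/c_1^2\bigr)\sqrt{8\|k\|_\infty\ln(2/\delta)}$, the first coming from $\|C_X^{-1}\|\,\|E-E_N\|$ and the second from $\|(C_X^N)^{-1}\|\,\|C_X-C_X^N\|\,\|C_X^{-1}\|\,\|E_N\|$.

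\emph{Main obstacle.} I expect the genuinely delicate parts to be: (i) pinning down the Hilbert--Schmidt concentration inequality with exactly the constant $8\|k\|_\infty$ in the exponent — the standard proofs (McDiarmid applied to $\|\frac1N\sum\xi_j-\E\xi\|$, or a Pinelis-type bound) are sensitive to how the centering and the bound on $\|\xi_j\|$ versus $\|\xi_j\|^2$ are treated; (ii) the probability bookkeeping when the empirical operators share the sample $\{x_i\}$, which must be examined to justify the clean $(1-\delta)^4$ rather than the weaker union bound; and (iii) making the compositions of $C_X^{-1}$ (defined on all of $\H_\X$) with $(C_X^N)^{-1}$ (a priori defined only on the random subspace $\H_{\X,N}$) rigorous, which is cleanest to handle by working throughout on $\H_{\X,N}$ with $P_NC_XP_N$ and transferring the coercivity constant $c_1$ to it via the same concentration estimate.
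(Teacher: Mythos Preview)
The present paper does not actually prove this lemma: it is explicitly \emph{recalled} from the authors' earlier work \cite{olguin2025improvederrorboundskoopman} (see the sentence immediately preceding the lemma statement), so there is no proof in this paper to compare your attempt against. You have correctly identified this, and your reconstruction follows the standard route that such kEDMD bounds take --- the resolvent-type perturbation identity for $C_X^{-1}C_{X\bullet}-(C_X^N)^{-1}C_{X\bullet}^N$, coercivity of $C_X$ to control $\|C_X^{-1}\|$ and (via the condition $\eps_N<c_1$) also $\|(C_X^N)^{-1}\|$, and Pinelis--Hoeffding concentration for the four empirical covariance operators in Hilbert--Schmidt norm --- which is almost certainly the argument in the cited paper as well.

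Your self-identified obstacles are exactly the places where care is needed. A minor slip: the perturbation identity you wrote has a sign error (it should read $D^{-1}E-D_N^{-1}E_N=D^{-1}(E-E_N)-D_N^{-1}(D-D_N)D^{-1}E_N$), though this is irrelevant once norms are taken. On the probability bookkeeping, note that the four empirical operators $C_X^N$, $C_{XX^+}^N$, $C_{XY}^N$, $C_{XB}^N$ all share the same sample $\{x_i\}$ but use independent second coordinates $x_i$, $x_i^+$, $y_i$, so the events are not independent and the clean $(1-\delta)^4$ is not obviously the product of independent probabilities; the cited paper presumably handles this either by a union bound repackaged multiplicatively or by a specific argument you would need to consult there.
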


This error bound for the Koopman operator approximation in this setting with kEDMD, lets prove the error bound for the Kalman filter iteration, since one step of the filter is composed mainly of linear relations, with an inverse of an operator that have to pay attention more rigoursly in the proof. The next proposition shows that the embedded dynamic and error covariance operator can be approximated, as RKHS element and operator, respectively, with error \(O(N^{-1/2})\), in each iteration.

\begin{theo}
    Let $\delta \in (0, 1)$ and $N \in \mathbb{N}$ such that
    \[
    \delta > 2 \text{exp} \left( -\frac{Nc_1^2}{8 \max \{\mathbf{g}_\infty, \|k\|_\infty \} } \right)
    \]
    where
    \[
    \mathbf{g}_\infty \coloneq \sup_{\mathbf{x} \in \X}(\E[\mathbf{g}(\mathbf{x},\cdot)^2] + \E[\mathbf{g}(\mathbf{x},\cdot)]^2)
    \]
    and $c_1$ is the coercivity constant of $C_X$. If $\U \H_\X \subset \H_\X$, $\G (\R^p)^* \subset \H_\X$, and $\B (\R^n)^* \subseteq \H_\X$, then there exist constants $C_{k,\delta}^1$ and $C_{k,\delta}^2$ such that, with probability at least $(1 - \delta)^{4+2(T+1)}$, the following holds:
    \begin{equation*}
        \begin{aligned}
            \| \hat \mu_{k} - \hat \mu_{N,k}  \| \leq  \, C_{k, \delta}^1 N^{-1/2}, \, \| \mathcal{P}_{k} - \mathcal{P}_{N,k}  \| \leq \, C_{k, \delta}^2 N^{-1/2}.
        \end{aligned}
    \end{equation*}
    Where the constants $C_{k,\delta}^j = \Tilde{C}_k^j C_\delta$, with $C_\delta$ being the constant defined in Lemma \ref{lemma:kedmd_bound}, and the $\Tilde{C}_k^j$ are positive and depend on $k$ only through \(\| \U \|\), \(\| \G \|\), \( \| \mathcal{Q}_{k} \|\), \(\| \mathcal{R}_{k} \|\), \(\| \S_{k}^{-1} \|\), \(\| \hat{\mu}_{k-1} \|\), and \(\| \mathcal{P}_{k-1} \|\), with $k \in \{ 0, \dots, T-1\}$.
    \label{theo:theo_KKF_2}
\end{theo}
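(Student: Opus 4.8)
The plan is to feed Proposition~\ref{prop:err_KKF_1} into an induction on $k$, using Lemma~\ref{lemma:kedmd_bound} for the operator errors $\|\U-\U_N\|$ and $\|\G-\G_N\|$ and a separate batch of concentration inequalities for everything else. The quantities on the right-hand sides of Proposition~\ref{prop:err_KKF_1} not handled by Lemma~\ref{lemma:kedmd_bound} are the empirical noise covariances $\|\mathcal{Q}_k-\mathcal{Q}_{N,k}\|$ and $\|\mathcal{R}_k-\mathcal{R}_{N,k}\|$ and, for the base case $k=0$, the empirical initial moments $\|\hat\mu_0-\hat\mu_{N,0}\|$ and $\|\mathcal{P}_0-\mathcal{P}_{N,0}\|$. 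These are empirical means and empirical (co)variance operators of random variables that are uniformly bounded on the compact invariant spaces: $\|\Phi_\X\|\le\sqrt{\|k\|_\infty}$ on $\X$, and the observation variables are controlled by $\mathbf{g}_\infty$, which is precisely why the threshold on $\delta$ in the statement involves $\max\{\mathbf{g}_\infty,\|k\|_\infty\}$ rather than $\|k\|_\infty$ alone. A Hoeffding/McDiarmid-type bounded-difference inequality for Hilbert-space-valued random variables then gives, for each of these quantities, a bound of the form $(\text{const})\cdot N^{-1/2}$ with probability at least $1-\delta$, where the constant can be absorbed into $C_\delta$ (equivalently, into the $\tilde C_k^j$).

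Next I would take a union bound. Because the kEDMD sample and the samples used for each empirical covariance are drawn independently, the event on which all of the following hold simultaneously factorizes: the three Koopman estimates and the coercivity of $C_X^N$ from Lemma~\ref{lemma:kedmd_bound} (the exponent $4$), together with the $2(T+1)$ concentration events controlling $\hat\mu_{N,0}$, $\mathcal{P}_{N,0}$ and the pairs $\{\mathcal{Q}_{N,k},\mathcal{R}_{N,k}\}_{k=1}^{T}$. Hence this joint event has probability at least $(1-\delta)^{4}(1-\delta)^{2(T+1)}=(1-\delta)^{4+2(T+1)}$, and on it every one of $\|\U-\U_N\|$, $\|\G-\G_N\|$, $\|\mathcal{Q}_k-\mathcal{Q}_{N,k}\|$, $\|\mathcal{R}_k-\mathcal{R}_{N,k}\|$ is at most a fixed constant times $C_\delta N^{-1/2}$. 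Then I would run the induction. The base case $k=0$ is immediate from the concentration of $\hat\mu_{N,0}$ and $\mathcal{P}_{N,0}$. For the inductive step, apply Proposition~\ref{prop:err_KKF_1}: its first four terms are $O(C_\delta N^{-1/2})$ by the previous sentence, and the last two, $\|\hat\mu_{k-1}-\hat\mu_{N,k-1}\|$ and $\|\mathcal{P}_{k-1}-\mathcal{P}_{N,k-1}\|$, are $O(C_\delta N^{-1/2})$ by the inductive hypothesis; summing, the left-hand sides are again $O(C_\delta N^{-1/2})$ with a new constant formed from finitely many sums and products of the $c_{k,j}^i$ and the constant from step $k-1$. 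Iterating over the $T+1$ steps yields $\tilde C_k^j$ that is finite — it grows at most geometrically in $k$, which is harmless since $T$ is fixed — and that, after the reduction described below, depends on $k$ only through the population quantities $\|\U\|,\|\G\|,\|\mathcal{Q}_k\|,\|\mathcal{R}_k\|,\|\S_k^{-1}\|,\|\hat\mu_{k-1}\|,\|\mathcal{P}_{k-1}\|$; setting $C_{k,\delta}^j=\tilde C_k^j C_\delta$ gives the claim.

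The main obstacle I anticipate is keeping the constants $N$-uniform and expressible through population quantities only. Proposition~\ref{prop:err_KKF_1} (via Theorem~\ref{theo:kalman_error}) a priori produces constants that also involve norms of the approximate objects $\|\U_N\|,\|\G_N\|,\|\hat\mu_{N,k-1}\|,\|\mathcal{P}_{N,k-1}\|$ and, most delicately, $\|\S_{N,k}^{-1}\|$. The first of these I would handle by noting that, on the good event and for $N$ large, each approximate object lies within $O(N^{-1/2})$ of its population counterpart (by Lemma~\ref{lemma:kedmd_bound} or the inductive hypothesis), hence is bounded by twice it. For $\|\S_{N,k}^{-1}\|$ one must additionally show that $\S_{N,k}=\G_N^*\mathcal{P}_{N,k}^-\G_N+\mathcal{R}_{N,k}$ is uniformly invertible: adding $\S_{N,k}$ to the list of quantities tracked by the recursion gives $\|\S_{N,k}-\S_k\|=O(N^{-1/2})$, and, invoking invertibility of $\S_k$ (guaranteed by condition~\eqref{eq:condi_g}, which makes $\mathcal{R}_k$ positive definite), a Neumann-series argument yields $\|\S_{N,k}^{-1}\|\le 2\|\S_k^{-1}\|$ once $N$ is large enough. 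With these perturbation estimates in place, the rest is the bookkeeping of the recursion already sketched above.
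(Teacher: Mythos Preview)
Your approach matches the paper's: Hoeffding-type concentration for the empirical initial moments and noise covariances, Lemma~\ref{lemma:kedmd_bound} for the Koopman approximations, then Proposition~\ref{prop:err_KKF_1} fed into an induction on $k$, with the probability $(1-\delta)^{4+2(T+1)}$ arising from exactly the same count of independent events. Your explicit treatment of $N$-uniformity of the constants --- in particular the Neumann-series control of $\|\S_{N,k}^{-1}\|$ --- is in fact more careful than the paper, which simply asserts via Proposition~\ref{prop:err_KKF_1} that the constants depend only on population quantities without spelling out why the approximate norms stay bounded.
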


\begin{proof}[Proof of Theorem \ref{theo:theo_KKF_2}]
Since \(\hat{\mu}_{N,0}\), \(\mathcal{P}_{N,0}\), \(\mathcal{Q}_{N,k}\), and \(\mathcal{R}_{N,k}\) are unbiased estimators of \(\hat{\mu}_{0}\), \(\mathcal{P}_{0}\), \(\mathcal{Q}_{k}\), and \(\mathcal{R}_{k}\), respectively, for $k \in \{1, \dots, T\}$, and 
\[
\begin{aligned}
    \| \mathcal{R}_k \| 
    &= \| \E [(\mathbf{g}(\mathbf{x}_k, \cdot) - \E[\mathbf{g}(\mathbf{x}_k,\cdot)])(\mathbf{g}(\mathbf{x}_k, \cdot) - \E[\mathbf{g}(\mathbf{x}_k,\cdot)])^\top ] \| \\
    &\leq  \E \!\left[ \| \mathbf{g}(\mathbf{x}_k, \cdot) - \E[\mathbf{g}(\mathbf{x}_k,\cdot)] \|^2 \right] \\
    & \leq \E[(\|\mathbf{g}(\mathbf{x}_k,\cdot)\| + \E[\mathbf{g}(\mathbf{x}_k,\cdot)])^2] \\
    & \leq \E[\mathbf{g}(\mathbf{x}_k,\cdot)^2] + 2 \E [\mathbf{g}(\mathbf{x}_k,\cdot)]^2 + \E[\mathbf{g}(\mathbf{x}_k,\cdot)^2] \\
    &= 2(\E[\mathbf{g}(\mathbf{x}_k,\cdot)^2] + \E[\mathbf{g}(\mathbf{x}_k,\cdot)]^2) \\
    & \leq 2 \sup_{\mathbf{x} \in \X}\big(\E[\mathbf{g}(\mathbf{x},\cdot)^2] + \E[\mathbf{g}(\mathbf{x},\cdot)]^2\big) 
    = 2\mathbf{g}_\infty,
\end{aligned}
\]
and similarly \(\|\mathcal{P}_k\| \leq 2 \| k \|_\infty\), as same as in \cite{olguin2025improvederrorboundskoopman}, Hoeffding’s inequality \cite{Pinelis1994OptimumSpaces} yields that with probability at least $(1-\delta)^{2(T+1)}$
\[
    \| \hat{\mu}_{N,0} - \hat{\mu}_{0} \|, \, 
    \|\mathcal{P}_{N,0} - \mathcal{P}_{0} \|, \, 
    \| \mathcal{Q}_{N,k} - \mathcal{Q}_{k} \|, \, 
    \| \mathcal{R}_{N,k} - \mathcal{R}_{k} \| 
    \leq C_\delta N^{-1/2}.
\]

\medskip
\noindent
\textbf{Step 1 (Recursive bounds).}  
Applying Proposition \ref{prop:err_KKF_1}, there exist constants $c_{k,j}^i$ with $j \in \{ 1, \dots, 6\}$, $i \in \{ 1, 2\}$ such that
\begin{equation*}
\begin{aligned}
    \| \hat \mu_{k} - \hat \mu_{N,k} \| 
    &\leq c_{1,k}^1 \| \mathcal{U} - \mathcal{U}_N \| 
    + c_{2,k}^1 \| \mathcal{G} - \mathcal{G}_N \| 
    + (c_{3,k}^1+c_{4,k}^1) C N^{-1/2} \\
    &\quad + c_{5,k}^1 \| \hat \mu_{k-1} - \hat \mu_{N, k-1} \| 
    + c_{6,k}^1 \| \mathcal{P}_{k-1} - \mathcal{P}_{N, k-1} \|,
\end{aligned}
\end{equation*}
and similarly
\begin{equation*}
\begin{aligned}
    \| \mathcal{P}_{k} - \mathcal{P}_{N,k} \| 
    &\leq c_{1,k}^2 \| \mathcal{U} - \mathcal{U}_N \| 
    + c_{2,k}^2 \| \mathcal{G} - \mathcal{G}_N \| 
    + (c_{3,k}^2+c_{4,k}^2) C N^{-1/2} \\
    &\quad + c_{5,k}^2 \| \hat \mu_{k-1} - \hat \mu_{N, k-1} \| 
    + c_{6,k}^2 \| \mathcal{P}_{k-1} - \mathcal{P}_{N, k-1} \|.
\end{aligned}
\end{equation*}

\medskip
\noindent
\textbf{Step 2 (Application of Lemma \ref{lemma:kedmd_bound}).}  
By Lemma \ref{lemma:kedmd_bound}, with probability at least $(1-\delta)^{4+2(T+1)}$, there exist constants $C^1_{1, k, \delta}$ and $C^2_{1, k, \delta}$ such that
\begin{equation*}
\begin{aligned}
    \| \hat \mu_{k} - \hat \mu_{N,k} \| 
    &\leq C_{1,k, \delta}^1 N^{-1/2} 
    + C_{2,k, \delta}^1 \| \hat \mu_{k-1} - \hat \mu_{N, k-1} \| 
    + C_{3,k, \delta}^1 \| \mathcal{P}_{k-1} - \mathcal{P}_{N, k-1} \|,
\end{aligned}
\end{equation*}
and
\begin{equation*}
\begin{aligned}
    \| \mathcal{P}_{k} - \mathcal{P}_{N,k} \| 
    &\leq C_{1,k, \delta}^2 N^{-1/2} 
    + C_{2,k, \delta}^2 \| \hat \mu_{k-1} - \hat \mu_{N, k-1} \| 
    + C_{3,k, \delta}^2 \| \mathcal{P}_{k-1} - \mathcal{P}_{N,k-1} \|.
\end{aligned}
\end{equation*}

\medskip
\noindent
\textbf{Step 3 (Induction).}  
We prove the result by induction on $k$.  

\emph{Base case $k=1$.}  
The claim follows directly from Theorem \ref{theo:kalman_error} applied at $k=1$.  

\emph{Inductive hypothesis.}  
Assume that for some $k \in \mathbb{N}$,
\begin{equation*}
\begin{aligned}
    \| \hat \mu_{k} - \hat \mu_{N,k} \| 
    &\leq C_{1,k,\delta}^1 N^{-1/2} 
    + C_{2,k,\delta}^1 \| \hat \mu_{0} - \hat \mu_{N, 0} \| 
    + C_{3,k,\delta}^1 \| \mathcal{P}_{0} - \mathcal{P}_{N, 0} \|,
\end{aligned}
\end{equation*}
\begin{equation*}
\begin{aligned}
    \| \mathcal{P}_{k} - \mathcal{P}_{N,k} \| 
    &\leq C_{1,k,\delta}^2 N^{-1/2} 
    + C_{2,k,\delta}^2 \| \hat \mu_{0} - \hat \mu_{N, 0} \| 
    + C_{3,k,\delta}^2 \| \mathcal{P}_{0} - \mathcal{P}_{N, 0} \|.
\end{aligned}
\end{equation*}

\emph{Inductive step.}  
For $k+1$, it suffices to show the bound for $\| \hat \mu_{k+1} - \hat \mu_{N,k+1} \|$ (the other case is analogous). From the recursion,
\begin{equation*}
\begin{aligned}
    \| \hat \mu_{k+1} - \hat \mu_{N,k+1} \| 
    &\leq C_{1,k+1,\delta}^1 N^{-1/2} 
    + c_{5,k+1}^1 \| \hat \mu_{k} - \hat \mu_{N, k} \| 
    + c_{6,k+1}^1 \| \mathcal{P}_{k} - \mathcal{P}_{N, k} \|.
\end{aligned}
\end{equation*}
Using the inductive hypothesis,
\begin{equation*}
\begin{aligned}
    \| \hat \mu_{k+1} - \hat \mu_{N,k+1} \| 
    &\leq C_{1,k+1,\delta}^1 N^{-1/2} \\
    &\quad + c_{5,k+1}^1 \big( C_{1,k,\delta}^1 N^{-1/2} + C_{2,k,\delta}^1 \| \hat \mu_{0} - \hat \mu_{N, 0} \| + C_{3,k,\delta}^1 \| \mathcal{P}_{0} - \mathcal{P}_{N, 0} \| \big) \\
    &\quad + c_{6,k+1}^1 \big( C_{1,k,\delta}^2 N^{-1/2} + C_{2,k,\delta}^2 \| \hat \mu_{0} - \hat \mu_{N, 0} \| + C_{3,k,\delta}^2 \| \mathcal{P}_{0} - \mathcal{P}_{N, 0} \| \big).
\end{aligned}
\end{equation*}
This can be reorganized as
\[
    \| \hat \mu_{k+1} - \hat \mu_{N,k+1} \| 
    \leq C_{1,k+1,\delta}^1 N^{-1/2} 
    + C_{2,k+1,\delta}^1 \| \hat \mu_{0} - \hat \mu_{N, 0} \| 
    + C_{3,k+1,\delta}^1 \| \mathcal{P}_{0} - \mathcal{P}_{N, 0} \|.
\]
Hence there exists a constant $C_{k+1,\delta}^1$ such that 
\(\| \hat \mu_{k+1} - \hat \mu_{N,k+1} \| \leq C_{k+1,\delta}^1 N^{-1/2}\), 
which completes the inductive step.

Finally, taking \(C_{k,\delta}^j := \max_{i \in \{1,2,3\}} C_{i,k,\delta}^j, 
\quad k \in \{0, \dots, T\}, \, j \in \{1, 2\}\), the proof is completed.
\end{proof}

We now analyze the error induced by returning to the original dimensional space through the \textit{lifting back} operator $\B : (\R^n)^* \to \X$, first defined in our previous work \cite{olguin2025improvederrorboundskoopman}, as follows. Let \(\Tilde{\phi}\) be the identity function in \(\X\); then \(\Tilde{\phi}\) is the canonical \textit{feature map}, up to an isomorphism, of the kernel \(\Tilde{k}(x,x') = x^\top x'\) in \(\X \subset \R^n\). The operator is defined as \(\B \coloneq (C_{X})^{-1} C_{XX},\) where \(C_{XX} \coloneq \E[ \Phi_\X(X) \otimes \Tilde{\phi}(X)].\) The following lemma was proved in \cite{olguin2025improvederrorboundskoopman}.

\begin{lem}
    \label{lem:lifting_back_op}
    The operator \(\B\) satisfies $\B^* \Phi_\X (\mathbf{x}) = \mathbf{x}$.
\end{lem}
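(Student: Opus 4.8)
The plan is to recognize $\B = (C_X)^{-1} C_{XX}$ as a conditional embedding operator in the spirit of Proposition~\ref{prop:covariance_koopman}: since the second copy of $X$ appearing in $C_{XX} = \E[\Phi_\X(X)\otimes\tilde\phi(X)]$ carries the identity feature map $\tilde\phi(x)=x$ and is conditioned on $X=x$ (hence deterministic), one expects $\B^*$ to send $\Phi_\X(x)$ to $\E[\tilde\phi(X)\mid X=x] = x$. To make this rigorous I would first fix $q\in(\R^n)^*$, identified with $\R^n$ via $q(v)=q^\top v$, and introduce the linear functional $h_q:\X\to\R$, $h_q(x)=q^\top x$. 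Because $\H_\X$ is norm-equivalent to $H^{n/2+\nu}(\X)$ on a compact set with Lipschitz boundary (Assumption~1(c)) and linear maps are smooth, $h_q\in\H_\X$; moreover $C_X$ is injective (its coercivity constant $c_1>0$ is a standing hypothesis, cf. Lemma~\ref{lemma:kedmd_bound}), so $\B q$ is well defined.

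The core computation is the identity $C_X h_q = C_{XX}\,q$. Expanding the left side with the definition of the autocovariance operator and the rank-one action $[\Phi_\X(X)\otimes\Phi_\X(X)]h_q = \langle h_q,\Phi_\X(X)\rangle_{\H_\X}\Phi_\X(X) = h_q(X)\Phi_\X(X)$ (reproducing property) gives $C_X h_q = \E[h_q(X)\Phi_\X(X)] = \E[(q^\top X)\Phi_\X(X)]$. Expanding the right side with $[\Phi_\X(X)\otimes\tilde\phi(X)]q = \langle q,\tilde\phi(X)\rangle\Phi_\X(X) = (q^\top X)\Phi_\X(X)$ yields the same expression. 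Hence $\B q = (C_X)^{-1}C_{XX}q = (C_X)^{-1}C_X h_q = h_q$, i.e. $\B q$ is precisely the function $x\mapsto q^\top x$.

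From here the conclusion is immediate: for any $x\in\X$ and any $q\in(\R^n)^*$, the defining property of the adjoint together with the reproducing property give $\langle q,\,\B^*\Phi_\X(x)\rangle_{(\R^n)^*} = \langle \B q,\,\Phi_\X(x)\rangle_{\H_\X} = (\B q)(x) = h_q(x) = q^\top x = \langle q, x\rangle$. Since $q$ is arbitrary, $\B^*\Phi_\X(x) = x$.

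I expect the only delicate points to be bookkeeping rather than substance: (i) justifying $h_q\in\H_\X$, which I handle via the Sobolev norm-equivalence of Assumption~1(c); (ii) the invertibility of $C_X$ needed to define $\B$, taken from the standing coercivity hypothesis; and (iii) tracking the identification of $(\R^n)^*$ with $\R^n$ and of $\tilde\phi$ with the identity ``up to an isomorphism,'' exactly as done for $\H_\Y=(\R^p)^*$. The argument is essentially a transcription of the proof of Proposition~\ref{prop:covariance_koopman} (equivalently, Proposition~3 of \cite{olguin2025improvederrorboundskoopman}) with $X^+$ replaced by $X$ itself and $\Phi_\X$ replaced by $\tilde\phi$ in the second slot, so no new estimates are required.
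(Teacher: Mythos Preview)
Your proof is correct and is exactly the argument the paper has in mind: the paper does not reprove the lemma but simply cites \cite{olguin2025improvederrorboundskoopman}, and as you note yourself, your computation is the direct specialization of the argument behind Proposition~\ref{prop:covariance_koopman} (Proposition~3 of that reference) with the second feature map taken to be $\tilde\phi=\mathrm{id}$. The core identity $C_X h_q = C_{XX}\,q$ and the adjoint/reproducing-property step are precisely what is needed, and your handling of the bookkeeping points (i)--(iii) is adequate.
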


We also define the approximation of \(\B\) as 
\[
\B_N \coloneq (C_X^N)^{-1} C_{XX}^N ,
\]
where
\[
C_{XX}^N \coloneq \frac{1}{N} \sum_{j=1}^N \Phi_\X (X_j) \otimes \Tilde{\phi}(X_j).
\]
In virtue of this, we define the exact estimator of the problem as
\[
\hat{\mathbf{x}}_{k} = \B^* \hat{\mu}_{k},
\]
which is not accessible in practice, and is instead approximated by the estimator
\[
\hat{\mathbf{x}}_{N, k} = \B^*_{N} \hat{\mu}_{N,k}.
\]

This will be the estimator provided by the KKF filtering algorithm. Furthermore, recalling that the a posteriori error covariance matrix of the problem is defined as
\[
\mathbf{P}_{k} = \E[ (\hat{\mathbf{x}}_k - \mathbf{x}_k) (\hat{\mathbf{x}}_k - \mathbf{x}_k)^\top ],
\]
we obtain that
\begin{equation*}
    \begin{aligned}
        \mathbf{P}_{k} & = \E[ (\hat{\mathbf{x}}_k - \mathbf{x}_k) (\hat{\mathbf{x}}_k - \mathbf{x}_k)^\top ] \\
        & = \E[ (\B^* \Phi_\X (\hat{\mathbf{x}}_k) - \B^* \Phi_\X (\mathbf{x}_k)) (\B^* \Phi_\X (\hat{\mathbf{x}}_k) - \B^* \Phi_\X (\mathbf{x}_k))^\top ] \\
        & = \B^* \E[ (\mu_k - \hat{\mu}_{k}) \otimes (\mu_k - \hat{\mu}_{k})] \B \\
        & = \B^* \mathcal{P}_k \B.
    \end{aligned}
\end{equation*}
Accordingly, we define the approximate a posteriori error covariance matrix as
\[
\mathbf{P}_{N, k} = \B^*_{N} \mathcal{P}_{N, k} \B_{N}.
\]
With this, the desired error bound for the filter approximation can be easily derived.

\begin{theo}[KKF Error]
\label{theo:error_KKF_fin}
Let $\delta \in (0, 1)$ and $N \in \N$ such that
\[
    \delta > 2 \text{exp} \left( -\frac{Nc_1^2}{8 \max \{\mathbf{g}_\infty, \|k\|_\infty \} } \right)
    \] 
If $\U \H_\X \subset \H_\X$, $\G (\R^p)^* \subset \H_\X$, and $\B (\R^n)^* \subseteq \H_\X$, then there exist constants $\Tilde{C}^1_{k,\delta}$ and $\Tilde{C}^2_{k,\delta}$ such that, with probability at least $(1 - \delta)^{4 + 2(T+1)}$, it holds that
\begin{equation*}
    \| \hat{\mathbf{x}}_k - \hat{\mathbf{x}}_{N, k} \| \leq \Tilde{C}^1_{k,\delta} N^{-1/2},
    \quad 
    \| \mathbf{P}_k - \mathbf{P}_{N, k} \| \leq \Tilde{C}^2_{k,\delta} N^{-1/2}.
\end{equation*}
\end{theo}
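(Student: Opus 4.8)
The plan is to reduce everything to the two estimates already in hand: the bound on $\|\hat\mu_k - \hat\mu_{N,k}\|$ and $\|\mathcal{P}_k - \mathcal{P}_{N,k}\|$ from Theorem~\ref{theo:theo_KKF_2}, and the bound on $\|\B - \B_N\|$ from Lemma~\ref{lemma:kedmd_bound}. Indeed, $\hat{\mathbf{x}}_{N,k}$ and $\mathbf{P}_{N,k}$ are obtained from $\hat\mu_{N,k}$ and $\mathcal{P}_{N,k}$ only by applying the bounded finite-rank operator $\B_N$, so the whole argument is a triangle-inequality/submultiplicativity computation with no new randomness. In particular, the event on which Theorem~\ref{theo:theo_KKF_2} holds already contains the event of Lemma~\ref{lemma:kedmd_bound} (the latter is used to prove the former), so the entire argument runs on a single event of probability at least $(1-\delta)^{4+2(T+1)}$, and no further union bound is needed.

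First I would record the deterministic a priori bounds: $\|\hat\mu_k\| \le \sqrt{\|k\|_\infty}$ (Jensen, since $\|\Phi_\X(\mathbf{x})\|^2 = k_\X(\mathbf{x},\mathbf{x}) \le \|k\|_\infty$), $\|\mathcal{P}_k\| \le 2\|k\|_\infty$ (as in the proof of Theorem~\ref{theo:theo_KKF_2} and in \cite{olguin2025improvederrorboundskoopman}), and $\|\B\| < \infty$ (automatic, since $\B$ has finite-dimensional domain). On the good event these propagate to the empirical objects via the bounds of Theorem~\ref{theo:theo_KKF_2} and Lemma~\ref{lemma:kedmd_bound}:
\[
\|\hat\mu_{N,k}\| \le \sqrt{\|k\|_\infty} + C^1_{k,\delta}N^{-1/2}, \quad \|\mathcal{P}_{N,k}\| \le 2\|k\|_\infty + C^2_{k,\delta}N^{-1/2}, \quad \|\B_N\| \le \|\B\| + C_\delta N^{-1/2},
\]
all bounded uniformly over the admissible range of $N$.

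For the state estimate I would use the splitting $\hat{\mathbf{x}}_k - \hat{\mathbf{x}}_{N,k} = \B^*(\hat\mu_k - \hat\mu_{N,k}) + (\B - \B_N)^*\hat\mu_{N,k}$, giving
\[
\|\hat{\mathbf{x}}_k - \hat{\mathbf{x}}_{N,k}\| \le \|\B\|\,\|\hat\mu_k - \hat\mu_{N,k}\| + \|\B - \B_N\|\,\|\hat\mu_{N,k}\|,
\]
and then insert the bounds above. For the covariance I would use the three-term telescoping
\[
\mathbf{P}_k - \mathbf{P}_{N,k} = \B^*\mathcal{P}_k(\B - \B_N) + \B^*(\mathcal{P}_k - \mathcal{P}_{N,k})\B_N + (\B - \B_N)^*\mathcal{P}_{N,k}\B_N,
\]
bound each summand by submultiplicativity of the operator norm, and plug in $\|\mathcal{P}_k\| \le 2\|k\|_\infty$, the uniform bounds on $\|\B_N\|$ and $\|\mathcal{P}_{N,k}\|$, and once more Theorem~\ref{theo:theo_KKF_2} and Lemma~\ref{lemma:kedmd_bound}. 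Every term is then a constant times $N^{-1/2}$, possibly multiplied by an additional nonnegative power of $N^{-1/2}$; since $N^{-1}\le N^{-1/2}$ for $N\ge 1$, collecting the leading constants yields $\tilde C^1_{k,\delta}$ and $\tilde C^2_{k,\delta}$, which depend on $k$ only through $\|\B\|$, $\|k\|_\infty$, $\mathbf{g}_\infty$, and the quantities listed in Theorem~\ref{theo:theo_KKF_2}.

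There is no genuine obstacle here; the only points requiring care are the probability bookkeeping — one must check that $\B_N$ introduces no randomness beyond what Lemma~\ref{lemma:kedmd_bound} already controls, so that the intersection event retains probability $(1-\delta)^{4+2(T+1)}$ — and the uniform-in-$N$ control of $\|\B_N\|$ and $\|\mathcal{P}_{N,k}\|$ that is needed to absorb the cross terms into $\delta$-dependent constants. Both are settled by the bounds above, so no new estimate is required and the theorem follows.
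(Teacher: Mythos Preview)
Your proposal is correct and follows essentially the same route as the paper: the same splitting for $\hat{\mathbf{x}}_k - \hat{\mathbf{x}}_{N,k}$, and a telescoping of the triple product $\B^*\mathcal{P}_k\B - \B_N^*\mathcal{P}_{N,k}\B_N$ (the paper does it in two nested steps rather than your symmetric three-term version, but the bounds produced are equivalent). If anything you are slightly more careful than the paper about the uniform-in-$N$ control of $\|\B_N\|$ and $\|\hat\mu_{N,k}\|$ and about the probability bookkeeping, whereas the paper simply absorbs $\|\hat\mu_{N,k}\|$ and $\|\B_N\|$ into the constants without comment.
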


\begin{proof}[Proof of Theorem \ref{theo:error_KKF_fin}]
We begin with the error in the state estimation:
\begin{equation*}
    \| \hat{\mathbf{x}}_k - \hat{\mathbf{x}}_{N, k} \|
    \leq \| \mathcal{B}^* \| \, \| \hat{\mu}_{k} - \hat{\mu}_{N, k}\|
    + \|\mathcal{B}^* - \mathcal{B}^*_{N} \| \, \|\hat{\mu}_{N,k} \|.
\end{equation*}
Invoking Lemma \ref{lemma:kedmd_bound} and Theorem \ref{theo:theo_KKF_2}, we obtain that with probability $(1 - \delta)^{4 + 2(T+1)}$ there exist positive constants $C_\delta, C_{k,\delta}^1$ such that
\begin{equation*}
    \| \hat{\mathbf{x}}_k - \hat{\mathbf{x}}_{N, k} \|
    \leq \| \mathcal{B} \| \, C_{k,\delta}^1 N^{-1/2}
    + \| \hat{\mu}_{N, k} \| \, C_\delta N^{-1/2}.
\end{equation*}
Using $\| \mathcal{B}^* \| = \| \mathcal{B} \|$ and $\| \mathcal{B}^* - \mathcal{B}^*_N \| = \| \mathcal{B} - \mathcal{B}_N \|$, we conclude that \(\| \hat{\mathbf{x}}_k - \hat{\mathbf{x}}_{N, k} \|
    \leq \Tilde{C}_{k,\delta}^1 N^{-1/2},\) where $\Tilde{C}_{k,\delta}^1 := \| \mathcal{B} \| C_{k,\delta}^1 + \| \hat{\mu}_{N,k} \| C_\delta$.

\medskip
We now turn to the a posteriori error covariance matrix. Adding and subtracting $\mathcal{B}^* \mathcal{P}_k \mathcal{B}_N$, and applying the triangle inequality, yields
\begin{equation*}
\begin{aligned}
    \| \mathbf{P}_k - \mathbf{P}_{N, k} \|
    &= \| \mathcal{B}^*\mathcal{P}_k \mathcal{B} - \mathcal{B}^*_N \mathcal{P}_{N, k} \mathcal{B}_N \|  \\
    &\leq \| \mathcal{B} - \mathcal{B}_N \| \, \| \mathcal{P}_k \mathcal{B} \|
    + \| \mathcal{B}_N \| \Big( \| \mathcal{B} - \mathcal{B}_N \| \, \| \mathcal{P}_k \|
    + \| \mathcal{P}_k - \mathcal{P}_{N, k} \| \, \| \mathcal{B}_N \| \Big).
\end{aligned}
\end{equation*}
Applying Lemma \ref{lemma:kedmd_bound} and Theorem \ref{theo:theo_KKF_2} again, it follows that with the same probability there exist positive constants $C_\delta, C_{k,\delta}^2$ such that
\begin{equation*}
    \| \mathbf{P}_k - \mathbf{P}_{N, k} \|
    \leq C_{k,\delta}^2 \| \mathcal{P}_k \mathcal{B} \| N^{-1/2}
    + \| \mathcal{B}_N \| \big( C_\delta \| \mathcal{P}_k \|
    + C_{k,\delta}^2 \| \mathcal{B}_N \| \big) N^{-1/2}.
\end{equation*}
Since \(\| \mathcal{B}_N \| \leq \| \mathcal{B}_N - \mathcal{B} \| + \| \mathcal{B} \|
\leq C_\delta N^{-1/2} + \| \mathcal{B} \|,\) we obtain
\begin{equation*}
\begin{aligned}
    \| \mathbf{P}_k - \mathbf{P}_{N, k} \|
    &\leq \Big(
        C_{k,\delta}^2 \| \mathcal{P}_k \mathcal{B} \|
        + (C_\delta + \| \mathcal{B} \|)
        \big( C_\delta \| \mathcal{P}_k \|
        + C_{k,\delta}^2 (C_\delta + \| \mathcal{B} \|) \big)
    \Big) N^{-1/2}.
\end{aligned}
\end{equation*}
Defining \(\Tilde{C}_{k,\delta}^2 :=
C_{k,\delta}^2 \| \mathcal{P}_k \mathcal{B} \|
+ (C_\delta + \| \mathcal{B} \|)
\big( C_\delta \| \mathcal{P}_k \|
+ C_{k,\delta}^2 (C_\delta + \| \mathcal{B} \|) \big),\) the desired estimate follows: \(\| \mathbf{P}_k - \mathbf{P}_{N, k} \| \leq \Tilde{C}_{k,\delta}^2 N^{-1/2}\). This completes the proof.
\end{proof}

With this bound, together with other developments from previous sections, we can now establish the following result regarding the bias and error of the estimator. The filtering problem, although an optimization-inspired problem, has an important statistical factor in practice. For that, we expect the estimator to be asymptotically unbiased in the number of points taken \(N\) for EDMD and to asymptotically attain the lower Mean Squared Error (MSE). 

\begin{theo}[Asymptotic bias and MSE] \label{theo:KKF_bias_var}
    Let $\delta \in (0, 1)$ and $N \in \N$ such that
\[
    \delta > 2 \text{exp} \left( -\frac{Nc_1^2}{8 \max \{\mathbf{g}_\infty, \|k\|_\infty \} } \right)
    \]
where $c_1$ is the injectivity constant of $C_X$. If $\U \H_\X \subseteq \H_\X$, $\G (\R^p)^* \subseteq \H_\X$ and $\B (\R^n)^* \subseteq \H_\X$, then there exist constants $\hat{C}^1_{k,\delta}$ and $\hat{C}^2_{k,\delta}$ such that, with probability at least $(1 - \delta)^4$, the following holds:
    \begin{enumerate}
        \item The bias of the estimator $\hat{\mathbf{x}}_{N, k}$ for the trajectory $\mathbf{x}_k$ is bounded by $\hat{C}^1_{k,\delta} N^{-1/2}$, that is,
    \begin{equation*}
        \left \| \E \left [ \hat{\mathbf{x}}_{N, k} - \mathbf{x}_k \right] \right \| \leq \hat{C}^1_{k,\delta} N^{-1/2}.
    \end{equation*}
        With this, we have that the estimator \(\hat{\mathbf{x}}_{N, k}\) is asymptotically unbiased for \(\mathbf{x}_k\). 
        \item The estimator $\hat{\mathbf{x}}_{N, k}$ is a $\hat{C}^2_{k,\delta} N^{-1/2}$-approximate minimizer of the filtering problem, that is,
    \begin{equation*}
        \E \left [ (\hat{\mathbf{x}}_{N, k} - \mathbf{x}_k)^\top (\hat{\mathbf{x}}_{N, k} - \mathbf{x}_k) \right] \leq  \E \left [ (\hat{\mathbf{x}}_{k} - \mathbf{x}_k)^\top (\hat{\mathbf{x}}_{k} - \mathbf{x}_k) \right] + \hat{C}^2_{k,\delta} N^{-1/2}.
    \end{equation*}
    With this, we have that the estimator asymptotically attains the lower MSE.
    \end{enumerate}
\end{theo}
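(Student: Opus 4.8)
The plan is to reduce both items to the pathwise estimate of Theorem~\ref{theo:error_KKF_fin}, namely $\|\hat{\mathbf{x}}_k - \hat{\mathbf{x}}_{N,k}\| \leq \Tilde{C}^1_{k,\delta} N^{-1/2}$ and $\|\mathbf{P}_k - \mathbf{P}_{N,k}\|\le\Tilde{C}^2_{k,\delta}N^{-1/2}$, valid on the high-probability event governing the EDMD sampling, together with two facts about the \emph{exact} estimator $\hat{\mathbf{x}}_k = \B^*\hat{\mu}_k$. First, since $\hat{\mu}_k = \E[\Phi_\X(\mathbf{x}_k)\mid\mathbf{y}_{1:k}]$ by definition, $\B^*$ is bounded and linear, and $\B^*\Phi_\X(\mathbf{x})=\mathbf{x}$ by Lemma~\ref{lem:lifting_back_op}, one may pull $\B^*$ inside the conditional expectation to obtain $\hat{\mathbf{x}}_k = \E[\mathbf{x}_k\mid\mathbf{y}_{1:k}]$; hence, by the tower property, $\hat{\mathbf{x}}_k$ is unbiased for $\mathbf{x}_k$, and, by the classical characterisation of the optimum of $(P)$, it realises the minimal per-step MSE. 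Second, the EDMD sample is independent of the process and observation noise, over which the expectations in the statement are taken for a fixed EDMD realisation, so on the good event the pathwise bound may be integrated against the trajectory law; the only delicate point is that this requires the constants in the chain Theorem~\ref{theo:kalman_error} $\to$ Theorem~\ref{theo:theo_KKF_2} $\to$ Theorem~\ref{theo:error_KKF_fin} to be integrable, which is guaranteed by the finite horizon $T$, the compactness of $\X$ (so that $\mathbf{x}_k$ is bounded and $\|\hat{\mu}_k\|\le\sqrt{\|k\|_\infty}$), and the finite–second–moment hypotheses, notably $\mathbf{g}_\infty<\infty$.

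For item 1, I would write $\E[\hat{\mathbf{x}}_{N,k}-\mathbf{x}_k] = \E[\hat{\mathbf{x}}_{N,k}-\hat{\mathbf{x}}_k] + \E[\hat{\mathbf{x}}_k-\mathbf{x}_k] = \E[\hat{\mathbf{x}}_{N,k}-\hat{\mathbf{x}}_k]$, the last step by unbiasedness of $\hat{\mathbf{x}}_k$, and then apply convexity of the norm (Jensen) followed by Theorem~\ref{theo:error_KKF_fin} on the good event:
\[
\left\|\E[\hat{\mathbf{x}}_{N,k}-\mathbf{x}_k]\right\| \;\leq\; \E\!\left\|\hat{\mathbf{x}}_{N,k}-\hat{\mathbf{x}}_k\right\| \;\leq\; \Tilde{C}^1_{k,\delta}\,N^{-1/2}.
\]
Taking $\hat{C}^1_{k,\delta}=\Tilde{C}^1_{k,\delta}$ proves the bound, and letting $N\to\infty$ gives asymptotic unbiasedness.

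For item 2, the approach is to expand the square around the exact estimator,
\[
\E\!\left\|\hat{\mathbf{x}}_{N,k}-\mathbf{x}_k\right\|^2 = \E\!\left\|\hat{\mathbf{x}}_{N,k}-\hat{\mathbf{x}}_k\right\|^2 + 2\,\E\inner{\hat{\mathbf{x}}_{N,k}-\hat{\mathbf{x}}_k}{\hat{\mathbf{x}}_k-\mathbf{x}_k} + \E\!\left\|\hat{\mathbf{x}}_k-\mathbf{x}_k\right\|^2,
\]
and control the first two terms. By Theorem~\ref{theo:error_KKF_fin} the first term is at most $(\Tilde{C}^1_{k,\delta})^2 N^{-1}\le(\Tilde{C}^1_{k,\delta})^2 N^{-1/2}$; by Cauchy--Schwarz the cross term is at most $2\Tilde{C}^1_{k,\delta}N^{-1/2}\,m_k$ with $m_k:=(\E\|\hat{\mathbf{x}}_k-\mathbf{x}_k\|^2)^{1/2}<\infty$ (finite since $\mathbf{x}_k$ is bounded); and the third term is exactly the minimal per-step MSE. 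Collecting terms with $\hat{C}^2_{k,\delta}:=(\Tilde{C}^1_{k,\delta})^2+2\Tilde{C}^1_{k,\delta}m_k$ yields
\[
\E\!\left[(\hat{\mathbf{x}}_{N,k}-\mathbf{x}_k)^\top(\hat{\mathbf{x}}_{N,k}-\mathbf{x}_k)\right] \;\leq\; \E\!\left[(\hat{\mathbf{x}}_k-\mathbf{x}_k)^\top(\hat{\mathbf{x}}_k-\mathbf{x}_k)\right] + \hat{C}^2_{k,\delta}\,N^{-1/2},
\]
so $\hat{\mathbf{x}}_{N,k}$ is the claimed approximate minimizer and attains the lower MSE as $N\to\infty$. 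I would remark that, since $\hat{\mathbf{x}}_k=\E[\mathbf{x}_k\mid\mathbf{y}_{1:k}]$ while $\hat{\mathbf{x}}_{N,k}-\hat{\mathbf{x}}_k$ is measurable with respect to $\mathbf{y}_{1:k}$ and the (independent) EDMD sample, conditioning on that $\sigma$-algebra makes the cross term vanish, which in fact sharpens the gap to $O(N^{-1})$.

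The main obstacle is not any single computation but the careful bookkeeping of the two independent randomness sources — the EDMD sample, on whose good event all the $O(N^{-1/2})$ operator estimates live, versus the process and observation noise, over which the bias and MSE are defined — and, tied to this, the verification that the pathwise constants supplied by Theorem~\ref{theo:error_KKF_fin} are integrable against the trajectory law, so that the $O(N^{-1/2})$ bounds can legitimately be pushed under $\E[\cdot]$; compactness of $\X$ and the finite–second–moment assumptions over the finite horizon $T$ are precisely what make this step go through.
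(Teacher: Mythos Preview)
Your proposal is correct and follows essentially the same approach as the paper: reduce both claims to the pathwise bound $\|\hat{\mathbf{x}}_k-\hat{\mathbf{x}}_{N,k}\|\le CN^{-1/2}$ together with the unbiasedness and MSE-optimality of the exact estimator $\hat{\mathbf{x}}_k$, then apply triangle/Jensen for the bias and a quadratic expansion with Cauchy--Schwarz for the MSE. The only cosmetic differences are that the paper splits the bias at the embedded level $\hat{\mu}_k$ (invoking Theorem~\ref{theo:theo_KKF_2} and Lemma~\ref{lemma:kedmd_bound} separately) rather than at the $\hat{\mathbf{x}}_k$ level via Theorem~\ref{theo:error_KKF_fin}, and uses the factorisation $a^\top a-b^\top b=(a-b)^\top(a+b)$ for the MSE rather than your add-and-subtract around $\hat{\mathbf{x}}_k$; your remark that the cross term vanishes by conditioning, yielding an $O(N^{-1})$ gap, is a sharpening not present in the paper.
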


\begin{rem}
    With this, we obtain that the KKF approximation \(\hat{\mathbf{x}}_{N,k}\) if the asymptotic Minimum Variance Unbiased Estimator (MVUE) for the real state \(\mathbf{x}_k\).
\end{rem}

\begin{proof}[Proof of Theorem \ref{theo:KKF_bias_var}]
    First, for point 1, it is obtained that
    \begin{equation*}
        \begin{aligned}
            \| \mathbb{E} [\hat{\mathbf{x}}_{N, k} - \mathbf{x}_k] \| 
            & = \| \mathbb{E} [ \mathcal{B}^*_N \hat{\mu}_{N,k} - \mathcal{B}^*_N \hat{\mu}_{k} ] + \mathbb{E} [\mathcal{B}^*_N \hat{\mu}_{k} - \mathcal{B}^* \hat{\mu}_k ] + \mathcal{B}^* \mathbb{E} [ \hat{\mu}_k - \mu_k ] \|.
        \end{aligned}
    \end{equation*}
    Thanks to Proposition \ref{prop:unbias_kalman_operator}, we obtain that $\hat{\mu}_k$ is unbiased for $\mu_k$, with which
    \begin{equation*}
        \begin{aligned}
            \| \mathbb{E} [\hat{\mathbf{x}}_{N, k} - \mathbf{x}_k] \|
            & = \| \mathbb{E} [ \mathcal{B}^*_N \hat{\mu}_{N,k} - \mathcal{B}^*_N \hat{\mu}_{k} ] + \mathbb{E} [\mathcal{B}^*_N \hat{\mu}_{k} - \mathcal{B}^* \hat{\mu}_k ] + \mathcal{B}^* \mathbb{E} [ \hat{\mu}_k - \mu_k ] \| \\
            & = \| \mathbb{E} [ \mathcal{B}^*_N \hat{\mu}_{N,k} - \mathcal{B}^*_N \hat{\mu}_{k} ] + \mathbb{E} [\mathcal{B}^*_N \hat{\mu}_{k} - \mathcal{B}^* \hat{\mu}_k ] \| \\
            & \leq \mathbb{E} [\| \mathcal{B}^*_N \| \| \hat{\mu}_{N,k} - \hat{\mu}_{k}\|] + \mathbb{E}[\| \mathcal{B}^*_N - \mathcal{B}^* \| \| \hat{\mu}_k \|]  \leq \hat{C}^1_{k,\delta} N^{-1/2}.
        \end{aligned}
    \end{equation*}
    where Lemma \ref{lemma:kedmd_bound} and Theorem \ref{theo:theo_KKF_2} have been used again to say that such a constant $\hat{C}^1_{k,\delta}$ exists with probability at least $(1 - \delta)^{4 + 2(T+1)}$. To prove point 2, note that
    \begin{equation*}
        \mathbb{E} [ (\hat{\mathbf{x}}_{N, k} - \mathbf{x}_k)^\top (\hat{\mathbf{x}}_{N, k} - \mathbf{x}_k) ] = \mathbb{E} [\hat{\mathbf{x}}_{N, k}^\top \hat{\mathbf{x}}_{N, k}] - 2 \mathbb{E} [\hat{\mathbf{x}}_{N, k}^\top \mathbf{x}_k] + \mathbb{E}[\mathbf{x}_k^\top \mathbf{x}_k]
    \end{equation*}
    \begin{equation*}
        \mathbb{E} [ (\hat{\mathbf{x}}_{k} - \mathbf{x}_k)^\top (\hat{\mathbf{x}}_{k} - \mathbf{x}_k) ] = \mathbb{E} [\hat{\mathbf{x}}_{k}^\top \hat{\mathbf{x}}_{k}] - 2 \mathbb{E} [\hat{\mathbf{x}}_{k}^\top \mathbf{x}_k] + \mathbb{E}[\mathbf{x}_k^\top \mathbf{x}_k].
    \end{equation*}
    Subtracting both, it is obtained that
    \begin{equation*}
        \begin{aligned}
            \mathbb{E} & [ (\hat{\mathbf{x}}_{N, k} - \mathbf{x}_k)^\top (\hat{\mathbf{x}}_{N, k} - \mathbf{x}_k) ] - \mathbb{E} [ (\hat{\mathbf{x}}_{k} - \mathbf{x}_k)^\top (\hat{\mathbf{x}}_{k} - \mathbf{x}_k) ] \\
            & = \mathbb{E} [(\hat{\mathbf{x}}_{N, k} - \hat{\mathbf{x}}_{k})^\top (\hat{\mathbf{x}}_{N, k} + \hat{\mathbf{x}}_{k})] + 2 \mathbb{E} [(\hat{\mathbf{x}}_{k} - \hat{\mathbf{x}}_{N, k})^\top \mathbf{x}_k].
        \end{aligned}
    \end{equation*}
    Note that since $\hat{\mathbf{x}}_k$ is optimal for the filtering problem, it holds that
    \begin{equation*}
        0 \leq \mathbb{E} [ (\hat{\mathbf{x}}_{N, k} - \mathbf{x}_k)^\top (\hat{\mathbf{x}}_{N, k} - \mathbf{x}_k) ] - \mathbb{E} [ (\hat{\mathbf{x}}_{k} - \mathbf{x}_k)^\top (\hat{\mathbf{x}}_{k} - \mathbf{x}_k) ].
    \end{equation*}
    Therefore, and using Cauchy–Schwarz, it is obtained that
    \begin{equation*}
        \begin{aligned}
            0 & \leq \mathbb{E} [(\hat{\mathbf{x}}_{N, k} - \hat{\mathbf{x}}_{k})^\top (\hat{\mathbf{x}}_{N, k} + \hat{\mathbf{x}}_{k})] + 2 \mathbb{E} [(\hat{\mathbf{x}}_{k} - \hat{\mathbf{x}}_{N, k})^\top \mathbf{x}_k] \\
            & \leq \mathbb{E} [\| \hat{\mathbf{x}}_{N, k} - \hat{\mathbf{x}}_k \| \| \hat{\mathbf{x}}_{N, k} + \hat{\mathbf{x}}_k \|] + 2 \mathbb{E} [\| \hat{\mathbf{x}}_{N, k} - \hat{\mathbf{x}}_k \| \| \mathbf{x}_k \| ] \\
            & \leq \mathbb{E} [\| \hat{\mathbf{x}}_{N, k} - \hat{\mathbf{x}}_k \| ( \| \hat{\mathbf{x}}_{N, k} \| + \| \hat{\mathbf{x}}_k \|)] + 2 \mathbb{E} [\| \hat{\mathbf{x}}_{N, k} - \hat{\mathbf{x}}_k \| \| \mathbf{x}_k \| ]  \leq \hat{C}^2_{k,\delta} N^{-1/2},
        \end{aligned}
    \end{equation*}
    where Lemma \ref{lemma:kedmd_bound} and Theorem \ref{theo:theo_KKF_2} have been used again to say that such a constant $\hat{C}^2_{k,\delta}$ exists with probability at least $(1 - \delta)^{4 + 2(T+1)}$. With this,
    \begin{equation*}
        \mathbb{E} [ (\hat{\mathbf{x}}_{N, k} - \mathbf{x}_k)^\top (\hat{\mathbf{x}}_{N, k} - \mathbf{x}_k) ] - \mathbb{E} [ (\hat{\mathbf{x}}_{k} - \mathbf{x}_k)^\top (\hat{\mathbf{x}}_{k} - \mathbf{x}_k) ] \leq \hat{C}^2_{k,\delta} N^{-1/2}, \quad \forall k \in \{ 0, \dots, T \},
    \end{equation*}
    denoting \(\hat{C}^2_\delta := \sup_{k \in \{ 0, \dots,T\} } \hat{C}^2_{k,\delta}\), it is obtained that \(\mathbb{E} [ (\hat{\mathbf{x}}_{N, k} - \mathbf{x}_k)^\top (\hat{\mathbf{x}}_{N, k} - \mathbf{x}_k) ] \leq \mathbb{E} [ (\hat{\mathbf{x}}_{k} - \mathbf{x}_k)^\top (\hat{\mathbf{x}}_{k} - \mathbf{x}_k) ] + \hat{C}^2_{\delta} N^{-1/2}\), so $\hat{\mathbf{x}}_{N, k}$ is a $\hat{C}^2_{\delta} N^{-1/2}$-minimum of the filtering problem.
\end{proof}

\subsection{Parameters estimation algorithm}

Starting from the existence of a filtering algorithm, it's possible to develop a methodology for estimating constant and unknown parameters in a discrete-time dynamical system.

The idea behind this approach is that if the system or the algorithm effectively incorporates the premise that a parameter must exist which fits the observations, then with the stabilization provided by the Kalman gain operator, the constant parameter should converge to its true value as the filtering algorithm's time iterations go to infinity.

Formally, if the system has $n_p$ parameters to be estimated, which are unknown in practice and denoted by $\mathbf{p} \in \R^{n_p}$, then these are added as states with a zero discrete derivative and an additive noise centered at $0$ with a finite second moment. That is, \(\mathbf{x}_{k+1}^{n+i} = \mathbf{x}_{k}^{n+1} + \Tilde{\mathbf{w}}_k^i, \quad i = 1, \dots, n_p.\)
This noise, $\mathbf{w}_k^i$, will assist the convergence of the parameter through the iterations. It is expected that \(\mathbf{x}_{k}^{n+i} \to \mathbf{p}_{i}, \quad k \to \infty\) and this will be observed in numerical experiments.

This approach is not novel and has already been explored in the literature, for example, in \cite{Deng2013AdaptiveObjects, Jiang2007AEstimation, Kandepu2008ApplyingEstimation}. However, this section proposes a reinterpretation of this methodology for parameter estimation. Furthermore, it aims to establish an equivalence with existing methods in other areas of the literature, such as those used in applied research dealing with parameter uncertainty from observations. A prominent example of these techniques is Markov Chain Monte Carlo (MCMC) algorithms \cite{Sammut2010EncyclopediaLearning}.

As is common in many applications, particularly those of interest in this work—the number of available observations is finite and, in many cases, extremely limited. For this reason, assuming the filtering algorithm can converge to the true parameter in the limit as the number of time iterations tends to infinity is not a valid option in this context.

Therefore, it is proposed to perform filtering in an iterative manner. In this approach, a prior is established as an initial condition, allowing the filtering algorithm to complete all available time iterations. Subsequently, the final element produced by the filter is used as a new initial condition, leveraging the error covariance matrix from the last iteration as both the initial condition's and the dynamic model's covariance matrix. This allows the method to self-regulate its variability.

This process can be repeated as many times as necessary and should, in theory, eventually converge to the original parameter. This behavior will be empirically evaluated in the plots and elements presented below. Moreover, drawing an analogy with the MCMC algorithm, it's possible to leverage process parallelization by using all available cores to generate multiple filters running in parallel. This way, different estimates can be obtained for each of the parameters, potentially improving the accuracy and robustness of the estimation process.

Note that, for each thread or subprocess, a time series is generated, where the time unit corresponds to the number of iterations assigned to the iterative algorithm. At each of these time steps, a different estimate for the parameter is produced. This naturally gives rise to a probability distribution that should concentrate around the true parameter. From this distribution, the first iterations of the algorithm will be discarded, as they will be considered \textit{warm-up} iterations, a procedure commonly used in the context of Markov Chain Monte Carlo as well and as implemented by certain frameworks for this method \cite{Patil2010PyMC:Python, Carpenter2017Stan:Language, Burkner2017Brms:Stan, Abril-Pla2023PyMC:Python}. Algorithm~\ref{alg:ParamEstim} outlines the proposed method for estimating constant parameters, which will be applied in the following section.

\begin{algorithm}[ht]
\caption{ParamEstim($\mathbf{x}_0$, $\mathbf{p}_0$, $\mathbf{y}$, $\text{iters}$)}
\label{alg:ParamEstim}
\begin{algorithmic}[1]
\State \textbf{Input:} $\mathbf{x}_0$ prior over the states, $\mathbf{Q}_0$ prior covariance over the states, $\mathbf{p}_0$ prior over the parameter, $\Tilde{\mathbf{P}}_0$ prior covariance over the parameter, $\text{iters}$ number of iterations.
\State \textbf{Output:} $\hat{\mathbf{p}} \in \R^{(\text{iters}+1)\times n_p}$ parameter estimate at each iteration.
\State $\hat{\mathbf{x}}_{0,:} \gets \hat{\mathbf{x}}_0$.
\State $\hat{\mathbf{p}}_{0,:} \gets \hat{\mathbf{p}}_0$.
\State Set initial prior for KKF as $$\Tilde{\mathbf{x}} \gets (\hat{\mathbf{x}}_{0}, \hat{\mathbf{p}}_{0,:}).$$
\State Set initial covariance for KKF as
\begin{equation*}
    \mathbf{P} = \begin{pmatrix}
        \mathbf{Q}_0 & 0 \\
        0            & \Tilde{\mathbf{P}}_0
    \end{pmatrix}
\end{equation*}
\For{$k=1, \dots, \text{iters}$}
    \State $\hat{\mathbf{x}}, \, \hat{\mathbf{P}} \gets \text{KKF}(\Tilde{\mathbf{x}}, \mathbf{P}, \mathbf{y})$ \Comment{$\mathbf{y}$ denotes the observations.}
    \State $\hat{\mathbf{p}}_{k,:} \gets \hat{\mathbf{x}}_{-1,n_p:}$ \Comment{Last iteration of the filter in the last $n_p$ entries.}
    \State $\Tilde{\mathbf{x}} \gets (\hat{\mathbf{x}}_{0}, \hat{\mathbf{p}}_{k,:})$
    \State Update the covariance matrix as
    \begin{equation*}
        \mathbf{P} = \begin{pmatrix}
            \mathbf{Q}_0 & 0 \\
            0            & \hat{\mathbf{P}}_{n_p:, n_p:}
        \end{pmatrix}
    \end{equation*}
    where $\hat{\mathbf{P}}_{n_p:, n_p:}$ is the square submatrix in the last $n_p \times n_p$ block.
\EndFor
\end{algorithmic}
\end{algorithm}

\section{Numerical results}
This section presents the numerical experiments related to the proposed filter, the error bound, and the parameter estimation algorithm. The code for these experiments can be found on \href{https://github.com/diegoolguinw/kkf_bound.git}{GitHub}\footnote{\href{https://github.com/diegoolguinw/kkf_bound.git}{https://github.com/diegoolguinw/kkf\_bound.git}}. The experiments were conducted on an Apple MacBook M1 with 8GB of RAM, and all random seeds were fixed to 42. For the execution of Koopman Kalman Filter, a Python library was develop as the name of \verb|kkf| and can be found in the following \href{https://github.com/diegoolguinw/kkf}{repository}\footnote{\href{https://github.com/diegoolguinw/kkf}{https://github.com/diegoolguinw/kkf}}.

\subsection{Empirical observation of the error bound}

In the linear case, the Kalman filter provides the exact solution to the filtering problem. Therefore, we can compare the trajectories to observe the evolution of the error as the number of sampled points \(N\) increases.

To visualize the bound, we set
\[
\mathbf{f}(\mathbf{x}, \mathbf{w}) = \mathbf{A} \mathbf{x} + \mathbf{w}, \quad \mathbf{g}(\mathbf{x}, \mathbf{v}) = \mathbf{C} \mathbf{x} + \mathbf{v},
\]
where \(\mathbf{A}\) has random entries in each experiment, with \(\mathbf{A}_{ij} \sim \mathcal{N}(0, 0.5)\). This choice was made to obtain different kinds of dynamics while also avoiding rapid exponential growth. The noises are defined as \(\mathbf{w} \sim \mathcal{N}(\mathbf{0}_{3}, \mathbf{Q})\) and \(\mathbf{v} \sim \mathcal{N}(\mathbf{0}_{2}, \mathbf{R})\) with
\[
\mathbf{Q} = 0.01 \cdot \mathbf{I}_{3\times3}, \quad \mathbf{R} = 0.01 \cdot \mathbf{I}_{2\times2}, \quad \mathbf{C} = \begin{pmatrix}
 1 & 0 & 0 \\
 0 & 1 & 0 
\end{pmatrix}.
\]
In this case, the system is observable with probability \(1\) due to the independence of the entries of \(\mathbf{A}\). Thus, the Kalman filter can identify the true trajectories in a finite amount of time. For the Kernel Kalman Filter (KKF), we used the Matérn kernel with \(\nu = 0.5\) and a width of \(\ell = 10^3\). For the distribution of the state space \(\X\), we used a uniform distribution such that \(X_i \sim \text{Unif}(-1000, 1000)\) for \(i \in \{1,2,3\}\).
For the true initial condition, we set \(\mathbf{x}_0 = (1, 1.5, 2)\). For filtering, we used a prior with the distribution \(\mathbf{x}_0^{\text{prior}} \sim \mathcal{N}((0.1, 0.2, 0.3), 0.1 \cdot \mathbf{I}_{3 \times 3} )\).

\begin{figure}[H]
 \centering
 \includegraphics[width=0.8\linewidth]{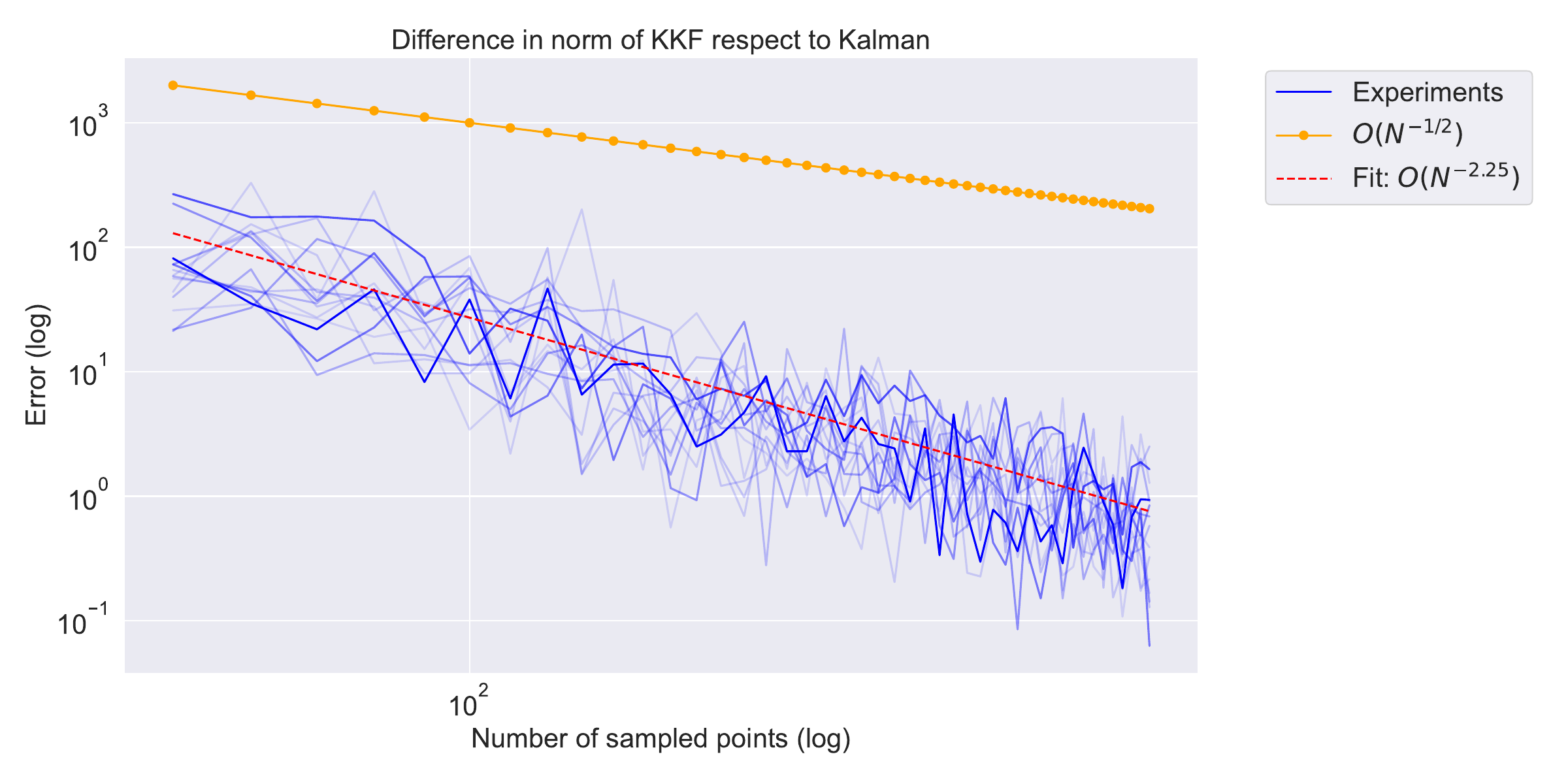}
 \caption{Errors of KKF trajectories with respect to Kalman filter trajectories. The dotted red line represents the best fit curve of the form \(C\cdot N^{\alpha}\) that fits the empirical errors.}
 \label{fig:errors_linear}
\end{figure}

We generated 15 different random matrices \(\mathbf{A}\) to create trajectories over 20 time units. For \(N \in [50, 500]\), we compared the empirical errors with the curve \(C\cdot N^{-1/2}\), where \(C = 8000\). We observed that the error of the KKF trajectories relative to the Kalman filter trajectories, as a function of \(N\), decreases at a rate of \(O(N^{-1/2})\) or even faster, as can be seen in Figure \ref{fig:errors_linear}.

\subsection{Comparison with other nonlinear filters}

This section presents the numerical experiments related to the proposed filter, the error bound, and the parameter estimation algorithm. We test the performance of the Kernel Kalman Filter (KKF) in the context of epidemiological models, which are classical and important examples in filtering and parameter estimation for nonlinear models. We test our approach on one of the most classical epidemiological models: the SIR model. This model simulates the interaction between susceptible and infected individuals who then recover and gain immunity. The model is represented by the following discrete-time equations:

\begin{equation}
 \begin{aligned}
 S_{k+1} & = S_k - \beta S_k I_k^p, \\
 I_{k+1} & = I_k + \beta S_k I_k^p - \gamma I_k, \\
 R_{k+1} & = R_k + \gamma I_k,
  \end{aligned}
  \tag{SIR}
\end{equation}
where \(p\) is the interaction exponent between infected and susceptible individuals and, along with the parameter \(\beta\), represents the model's nonlinearity. When \(p=1\), we recover the classical SIR system, but in many cases, higher exponents are considered to provide a better fit to the behavior of an infectious disease.

Figure \ref{fig:kkf_realization} shows a KKF execution for a SIR system, including the confidence interval for each coordinate of the system. These intervals are induced by the singular values of the matrix \(\mathbf{P}_{N,k}\), which approximates the covariance error matrix. That is, if the Singular Value Decomposition (SVD) of \(\mathbf{P}_{N,k}\) is \(\mathbf{P}_{N,k} = \mathbf{U} \cdot \text{diag}(\Sigma_{k,1}, \dots, \Sigma_{k,n}) \cdot \mathbf{V}^\top,\) where the singular values \(\Sigma_{k,i}\) have been reordered to correspond to the \(i\)-th coordinate of \(\hat{\mathbf{x}}_{N,k}\). By making a normal approximation, the \(1-\alpha\) confidence interval for \(\hat{\mathbf{x}}_{N,k}\) is \(\hat{\mathbf{x}}_{N,k} \pm z_{\alpha} (\Sigma_{k,1}, \dots, \Sigma_{k,n})^{\top}\), where \(z_\alpha\) is the \(\alpha\)-th percentile of a Student's t-distribution.

\begin{figure}[ht]
 \centering
 \includegraphics[width=0.75\linewidth]{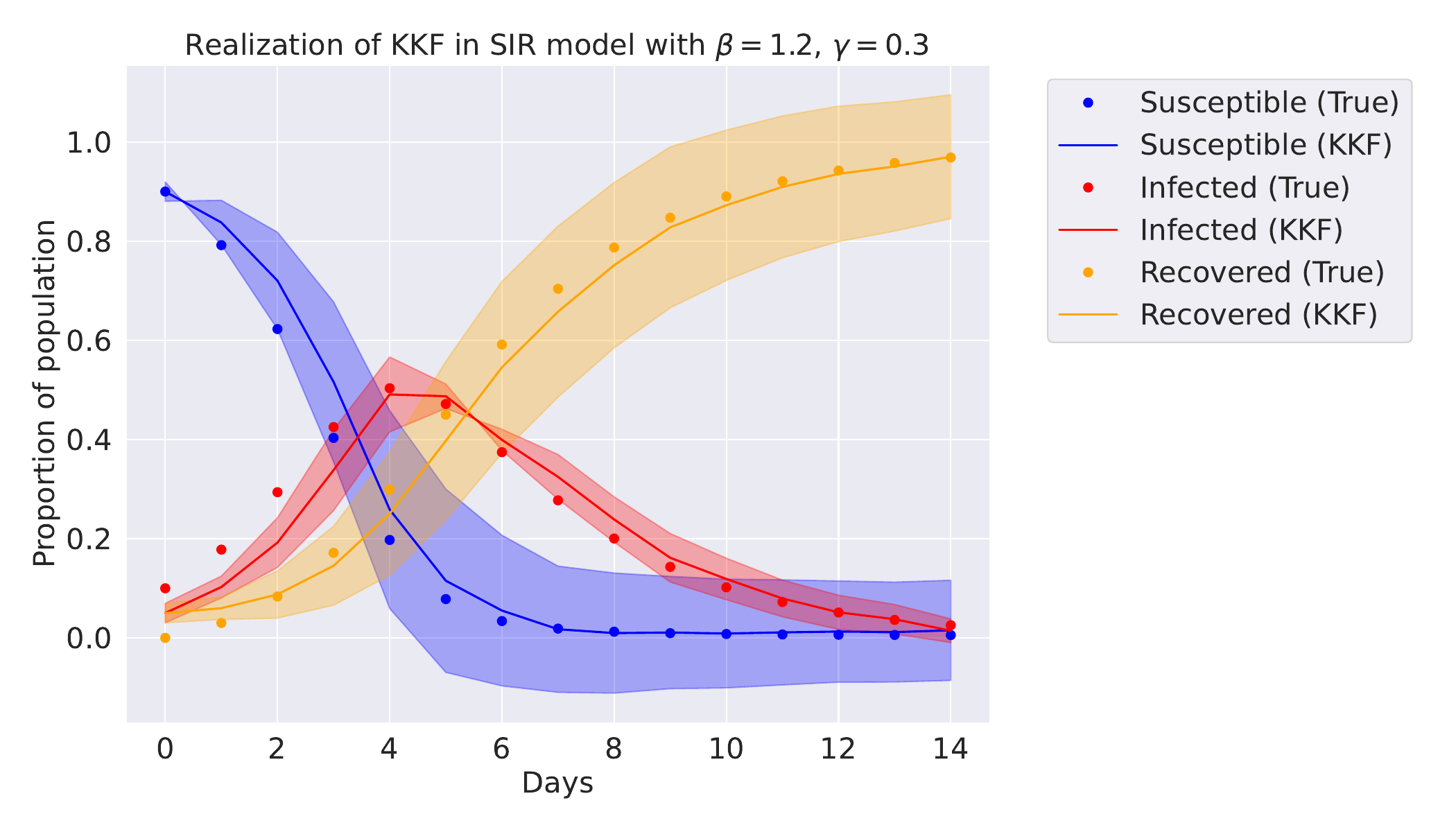}
 \caption{Realization of KKF in a SIR model.}
 \label{fig:kkf_realization}
\end{figure}

We compare the performance of the KKF against other filters from the literature, such as the Extended Kalman Filter (EKF), Unscented Kalman Filter (UKF), and Particle Filters (PF), for the (SIR) system with the following settings:
\begin{itemize}
 \item Setting 1: \(p = 1\), \(\beta \sim \text{Unif}(0.3, 1.5)\) and \(\gamma = 0.3\).
 \item Setting 2: \(p = 2\), \(\beta \sim \text{Unif}(0.3, 5)\) and \(\gamma = 0.3\).
 \item Setting 3: \(p = 3\), \(\beta \sim \text{Unif}(0.3, 10)\) and \(\gamma = 0.3\).
\end{itemize}
In each setting, we performed 10 experiments by sampling a random value for the infection rate \(\beta\) from its respective uniform distribution. We ran the model for 15 time units, adding centered Gaussian noise with a covariance matrix of \(0.01 \cdot \mathbf{I}_{3 \times 3}\). We observed only the second coordinate of the system, corresponding to the infected cases, with additive noise from the distribution \( \mathcal{N}(0, 0.01)\). For the PF and KKF, we show results for different numbers of particles and sampled points, respectively, to better demonstrate the improvement in error and the increase in execution time. The results are shown in Table \ref{tab:SIR_filtering_results}.

\begin{table}[H]
 \centering
 \begin{tabular}{{ccccccc}} 
  \toprule
  \multicolumn{1}{c}{Filtering} &
  \multicolumn{2}{c}{Setting 1} &
  \multicolumn{2}{c}{Setting 2} &
  \multicolumn{2}{c}{Setting 3} \\
  \cmidrule(lr){2-3}\cmidrule(lr){4-5}\cmidrule(lr){6-7}
  algorithm & \(L^2\) error & Ex. time & \(L^2\) error & Ex. time & \(L^2\) error & Ex. time \\
  \midrule
  EKF & 0.256 & 0.002 & 0.289 & 0.002 & 0.303 & 0.002 \\
  UKF & 0.296 & 0.004 & 0.599 & 0.005 & 0.454 & 0.005 \\
  PF (100 particles) & 0.338 & 0.338 & 0.471 & 0.351 & 0.486 & 0.376 \\
  PF (5000 particles) & 0.264 & 16.036 & 0.438 & 15.909 & 0.447 & 16.052 \\
  PF (10000 particles) & 0.264 & 31.827 & 0.436 & 31.748 & 0.432 & 32.064 \\
  KKF (100 points) & \textbf{0.157} & 0.064 & \textbf{0.120} & 0.056 & \textbf{0.094} & 0.077 \\
  KKF (300 points) & \textbf{0.156} & 0.177 & \textbf{0.136} & 0.159 & \textbf{0.082} & 0.184 \\
  KKF (500 points) & \textbf{0.155} & 0.447 & \textbf{0.133} & 0.454 & \textbf{0.081} & 0.596 \\
  \bottomrule
  \end{tabular}
  \caption{Performance of filtering algorithms in the (SIR$_p$) system with different values of the exponent \(p\) and random values for the infection rate \(\beta\). The parameter \(\gamma\) was fixed at \(0.3\) for all cases. The table presents the mean \(L^2\) error and mean execution time, based on 10 experiments for each setting.}
 \label{tab:SIR_filtering_results}
\end{table}

\subsection{Performance and comparison of parameter estimation algorithm}

We now consider more complex epidemiological systems. First, when the dynamics allow recovered individuals to lose immunity, the model changes to the SIRS model, represented by the following equations:

\begin{equation}
 \begin{aligned}
  S_{k+1} & = S_k - \beta S_k I_k + \alpha R_k, \\
  I_{k+1} & = I_k + \beta S_k I_k - \gamma I_k, \\
  R_{k+1} & = R_k + \gamma I_k - \alpha R_k,
 \end{aligned}
 \tag{SIRS}
\end{equation}
When a latent infected population is added, the SEIRS model arises:
\begin{equation}
 \begin{aligned}
  S_{k+1} & = S_k - \beta S_k I_k + \alpha R_k, \\
  E_{k+1} & = E_k + \beta S_k I_k - \delta E_k, \\
  I_{k+1} & = I_k + \delta E_k - \gamma I_k, \\
  R_{k+1} & = R_k + \gamma I_k - \alpha R_k.
 \end{aligned}
 \tag{SEIRS}
\end{equation}

We compare our parameter estimation algorithm, which uses the KKF, with other commonly used Bayesian techniques for parameter estimation, in particular in epidemiological models \cite{molina2025impactofvaccine}. These include MCMC samplers such as Differential Evolution Metropolis (DEMetropolisZ or DEMZ) and the No-U-Turn Sampler (NUTS) \cite{Hoffman2014TheCarlo}. We test the algorithms to estimate the parameters of the SIR, SIRS, and SEIRS models to show that our approach is competitive in terms of error but, more importantly, in terms of execution time. For the MCMC methods, we used the PyMC library \cite{Patil2010PyMC:Python, Abril-Pla2023PyMC:Python}. We chose 1,000 draws for DEMetropolisZ, 100 for NUTS, and 300 iterations for the KKF. Each run was performed in parallel on 8 cores, meaning that 8 parameter estimations were run simultaneously on different cores. In our experiments, we only observed the susceptible and infected populations, which correspond to the first and second coordinates of the system. The initial priors for parameter estimation are shown in Table \ref{tab:param_estim_priors}.

\begin{table}[H]
 \centering
 \begin{tabular}{c|c|c}
 \toprule
 Model & Parameters & Initial prior \\
 \midrule
 SIR (\(p=1\)) & \((\alpha, \beta)\) & \(\mathcal{N}((0.1, 0.1), 0.01 \cdot I_{2\times2})\) \\
 SIRS & \((\alpha, \beta, \gamma)\) & \(\mathcal{N}((0.1, 0.1, 0.1), 0.01 \cdot I_{3\times3})\) \\
 SEIRS & \((\alpha, \beta, \gamma, \delta)\) & \(\mathcal{N}((0.1, 0.1, 0.1, 0.1), 0.01 \cdot I_{4\times4})\) \\
 \bottomrule
 \end{tabular}
 \caption{Initial priors used for parameter estimation across all methods (DEMetropolisZ, NUTS, and KKF).}
 \label{tab:param_estim_priors}
\end{table}

\begin{figure}[ht]
 \centering
 \includegraphics[width=0.5\linewidth]{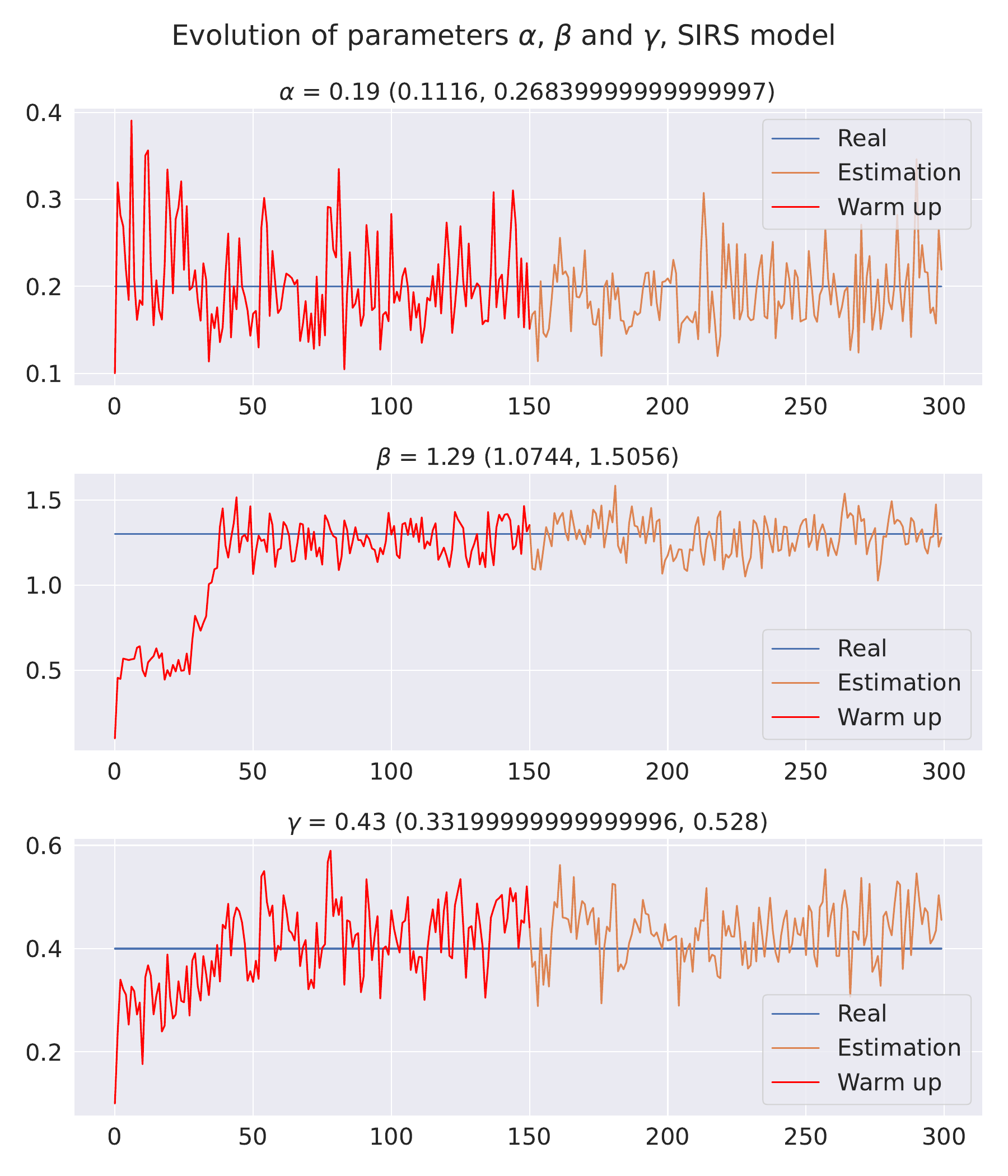}
 \caption{SIRS system parameter evolution in the KKF estimation. The first iterations used for \textit{warm-up} are shown in red. The later iterations for the distribution are shown in orange. The constant true parameter is shown as a blue line. In this case, we can see the successful estimation as the final iterations accumulate around the true parameters.}
 \label{fig:params_evolution_sirs}
\end{figure}

As the parameter estimation iterations proceed, we can see the evolution of the parameters from their initial mean values towards a steady state, which, in the best case, should be a distribution centered on the true parameter. The first iterations are usually not used or taken into account for the final parameter density estimation; these are typically denoted as \textit{warm-up} (or \textit{burn-in}) iterations. Figure \ref{fig:params_evolution_sirs} shows the evolution of the three parameters during the KKF estimation for the SIRS experiment, where the first 150 iterations are used for \textit{warm-up}.

From the final iterations, we can construct a probability density for the parameters based on their evolution. This is done for each process running in parallel. Finally, we take the overall mean to create a more robust density estimation. This can be seen for KKF and NUTS in Figure \ref{fig:params_densities_sirs}. From this, we can generate a parameter estimate with a confidence interval by making a normal approximation of the density. This is a reasonable approximation, given the form of the densities observed in the experiments.

\begin{figure}
 \centering
 \includegraphics[width=0.95\linewidth]{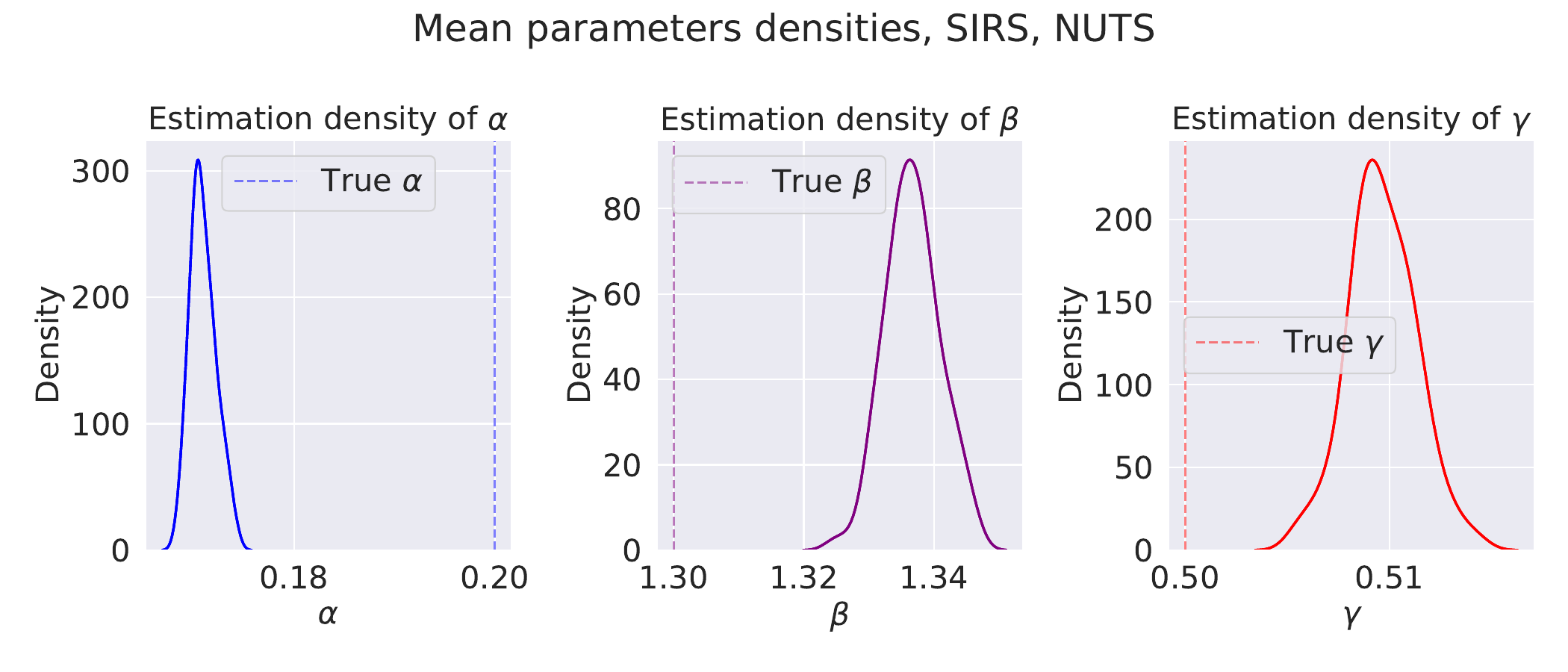}
 \includegraphics[width=0.95\linewidth]{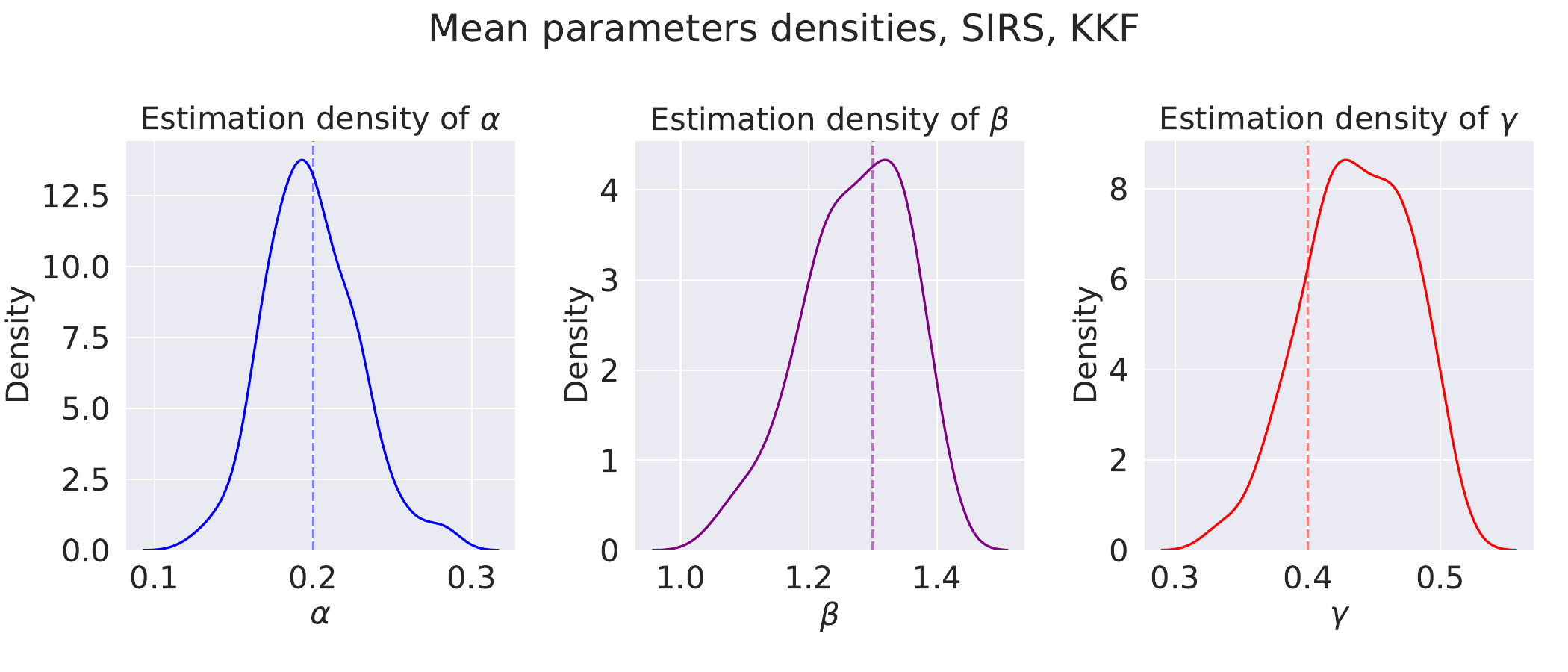}
 \caption{Mean parameter densities for the three parameters of the SIRS system. The mean is taken across all cores that are running a process in parallel. The dashed vertical line indicates the true parameter, which should correspond to, or be near, the mode of the distribution.}
 \label{fig:params_densities_sirs}
\end{figure}

\begin{table}
    \centering
    \begin{tabular}{lccccc}
        \toprule
        \textbf{Method} & \textbf{Model} & $\alpha$ & $\beta$ & $\gamma$ & $\delta$ \\
        \midrule
        \multirow{3}{*}{\textbf{DEMZ}} & SIR & - & $1.58 \, (0.95, 2.22)$ & $0.69 \, (0.23, 1.16)$ & - \\
        & SIRS & $0.39 \, (-0.29, 1.08)$ & $1.49 \, (0.89, 2.08)$ & $0.63 \, (0.20, 1.05)$ & - \\
        & SEIRS & $0.40 \, (-0.15, 0.95)$ & $1.67 \, (0.99, 2.36)$ & $0.48 \, (0.33, 0.63)$ & $0.44 \, (0.15, 0.73)$ \\
        \cmidrule{1-6}
        \multirow{3}{*}{\textbf{NUTS}} & SIR & - & $1.34 \, (1.31, 1.36)$ & $0.45 \, (0.44, 0.46)$ & - \\
        & SIRS & $0.17 \, (0.16, 0.18)$ & $1.34 \, (1.31, 1.36)$ & $0.51 \, (0.50, 0.52)$ & - \\
        & SEIRS & $0.15 \, (0.13, 0.17)$ & $1.27 \, (1.19, 1.35)$ & $0.47 \, (0.43, 0.51)$ & $0.39 \, (0.37, 0.40)$ \\
        \cmidrule{1-6}
        \multirow{3}{*}{\textbf{KKF}} & SIR & - & $1.38 \, (1.30, 1.46)$ & $0.47 \, (0.45, 0.49)$ & - \\
        & SIRS & $0.2 \, (0.14, 0.26)$ & $1.29 \, (1.15, 1.43)$ & $0.44 \, (0.36, 0.52)$ & - \\
        & SEIRS & $0.09 \, (0.07, 0.11)$ & $0.99 \, (0.93, 1.05)$ & $0.37 \, (0.35, 0.39)$ & $0.63 \, (0.57, 0.69)$ \\
        \cmidrule{1-6}
        \textbf{True par.} & All models & $0.2$ & $1.3$ & $0.4$ & $0.5$ \\
        \bottomrule
    \end{tabular}
    \caption{Results of parameter estimation of epidemiological models for different algorithms.}
    \label{tab:param_estim_params}
\end{table}

An error in the estimated parameters does not necessarily imply a significant error in the resulting trajectories. Furthermore, for some nonlinear systems, different parameter configurations can produce similar paths. To test this, we propagate the parameter distributions obtained from the different algorithms through the dynamical system, generating random trajectories that should be close to the true trajectories.

That is, if \(\mu_\theta\) is the empirical parameter probability distribution generated by the algorithms, and \(\theta_i \sim \mu_\theta\) is a sample of the parameter, a sample for a random trajectory is the one generated by \(\mathbf{f}\) with parameters \(\theta_i\). In a sense, this is a methodology to generate samples of the pushforward distribution of \(\mu_\theta\) through \(\mathbf{f}\). We exemplify this in the case of the SIRS model in Figure \ref{fig:SIRS_traj_params}.

\begin{figure}
 \centering
 \includegraphics[width=0.99\linewidth]{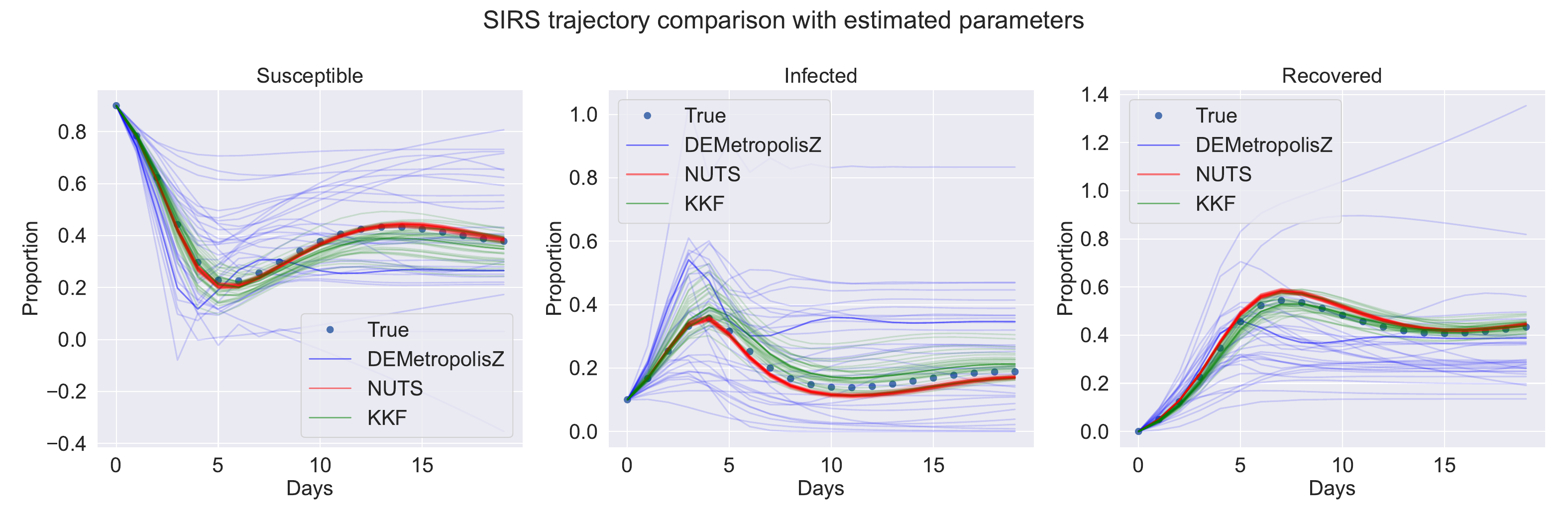}
 \caption{Samples of the trajectories from the parameter distribution generated by each parameter estimation algorithm for the SIRS model.}
 \label{fig:SIRS_traj_params}
\end{figure}

With this, we can measure the mean \(L^2\) error for 30 trajectories sampled from the distribution of the parameters propagated to the dynamical system. The results, along with the mean execution time, can be found in Table \ref{tab:param_estim_error_traj}, where it can be seen that NUTS is a particularly expensive algorithm in terms of time.

\begin{table}
    \centering
    \begin{tabular}{lccc}
        \toprule
        \textbf{Method} & \textbf{Model} & \textbf{\(L^2\) errors} & \textbf{Execution Time in seconds} \\
        \midrule
        \multirow{3}{*}{\textbf{DEMetropolisZ}} & SIR & $0.73$ & $96.61$ \\
        & SIRS & $1.15$ & $74.05$ \\
        & SEIRS & $1.12$ & $\textbf{62.59}$ \\
        \cmidrule{1-4}
        \multirow{3}{*}{\textbf{NUTS}} & SIR & $0.17$ & $341.98$ \\
        & SIRS & $0.16$ & $599.17$ \\
        & SEIRS & $0.19$ & $1654.34$ \\
        \cmidrule{1-4}
        \multirow{3}{*}{\textbf{KKF}} & SIR & $0.19$ & $\textbf{62.43}$ \\
        & SIRS & $0.37$ & $\textbf{61.35}$ \\
        & SEIRS & $0.24$ & $94.87$ \\
        \bottomrule
    \end{tabular}
    \caption{Comparison of methods based on mean \(L^2\) errors and mean execution time in seconds. The mean \(L^2\) errors were calculated from 30 trajectories generated from the probability distribution generated by estimation algorithms.}
    \label{tab:param_estim_error_traj}
\end{table}

\section{Discussion}
At first, this methodology, based on the Koopman operator, allows for the construction of an infinite-dimensional filtering algorithm that is completely based on the Kalman filter but for cases where we are working in Hilbert spaces. This is not only a methodology for constructing a filtering algorithm but also a way to understand the essence of the filtering problem. This is because, for some cases, there exists an equivalent infinite-dimensional optimization problem, which means that the nonlinear filtering problem can be completely reduced to the Kalman filter. This also corroborates the insight of Maurel and Michel \cite{Maurel1984DesFinie} that the nonlinear filtering problem generally does not have finite-dimensional solutions. Furthermore, the finite-dimensional solutions can be characterized by the rank of the Koopman operator, which offers a new perspective in the literature.

Despite this filtering technique being used in other publications, these works are completely empirical and do not provide a theoretical guarantee that the approximate solution converges to the true filtering solution as the number of points taken for EDMD increases. Therefore, we differentiate this work from the existing literature by providing a rigorous proof of convergence for this technique. This proof is also extendable to similar techniques if they have an associated error bound.

When we take a finite approximation of the filtering algorithm with kEDMD, the operator norm error is naturally bounded by \(O(N^{-1/2})\). This is because the operator's approximation is done by averages, and this convergence rate is standard in the literature, particularly with concentration bounds like Hoeffding's inequality. A novelty of this work is that we use the injectivity of the operator \(C_X\) to derive a new constant and, with it, a new bound.

This kind of bound was previously seen in Particle Filters, where the error is also \(O(N^{-1/2})\) where $N$ in this case is the number of points taken for kEDMD. However, in this case, we have two advantages. The first is a more understandable mechanism for the filtering problem, since the Particle Filter simply takes means, and the second is better results in practice and experiments. The second, and most important, advantage is the execution time. Our new algorithm appears to be much more efficient. This is generally due to the Koopman operator's capacity to capture the entire behavior of the dynamics, which can be better than a simple Monte Carlo approximation, as Particle Filters do.

In terms of the experimental results, we observed that the Kernel Kalman Filter (KKF) is superior to the other filtering algorithms with respect to both error and execution time. This is a significant finding because, in filtering tasks, Particle Filters (PF) have been dominant in recent years, with ongoing improvements to their techniques, yet they come with a high computational cost. Our technique, the KKF, has the ability to outperform these types of filters in the tested scenarios, which are also of public importance, particularly in healthcare due to the recent pandemic.

Other filtering algorithms, which are cheaper than Particle Filters and the KKF, such as the Extended Kalman Filter (EKF) and the Unscented Kalman Filter (UKF), do not have the ability to capture the nonlinearity of the systems. This is because they are based on a naive attempt to return to the linear case. While they are computationally cheaper, resulting in lower execution times, they yield poor results in terms of error. Unlike Particle Filters and the KKF, they lack the possibility for improvement by trading higher computational cost for better accuracy.

Another important feature of this methodology, and an advantage with respect to other filters, is that all the steps of the filtering algorithms tested (EKF, UKF, and PF) are performed online. Consequently, we cannot, in principle, save computation, implying that the online procedure is inevitably expensive. However, with KKF, we have the possibility to precompute the Koopman operator approximation, saving the most expensive part of the procedure, which is the generation of the operator's approximation. With this, the only remaining part is the generation of the covariance matrices at each step. Furthermore, a single Koopman operator approximation can serve for many KKF applications on the same system.

Our parameter estimation methodology has several improvements compared to other parameter estimation methodologies that use filters. For instance, the parameter update is performed using the error matrix, which is set as the covariance of the prior for the parameter's initial condition in each filter execution. This proves to be efficient in time and, more importantly, allows for the self-regulation of the error in the estimation, iteration by iteration. The most important insight with respect to this is that our method is very competitive with Markov Chain Monte Carlo (MCMC) methods, particularly the No-U-Turn Sampler (NUTS), which has shown excellence in parameter estimation for dynamical systems. Our algorithm, once again, wins in terms of execution time.

In the tested systems, we achieved similar errors to NUTS but with better execution times, representing an improvement of an order of magnitude. On the other hand, we have similar execution times to DEMetropolisZ but with significantly better error terms, which reflects the algorithm's capability to strongly compete with these other two methods. Furthermore, it has the capability to perform the same task as these two algorithms, which in principle come from other types of techniques, such as MCMC, a method that excels in parameter estimation. This is particularly relevant in epidemiology, where the systems tested are crucial for estimating important factors and making predictions. Since the KKF outperforms other filtering algorithms in filtering tasks, it is reasonable to expect that it can also outperform those algorithms in parameter estimation tasks. These results can be observed not only in the parameter errors but also in the trajectories. DEMetropolisZ produces highly dispersed curves, leading to large errors, while NUTS and the KKF have similar trajectories with comparable errors with respect to the true trajectories.

It is not the first time that filtering algorithms have been used for parameter estimation, but this new technique, which is in principle completely empirical but has yielded good results, proves to be better and on par with some other parameter estimation methodologies.

The reason why the KKF is cheaper than NUTS, for example, is the same as discussed previously for filtering: we can precompute the Koopman operator approximation and then run different realizations of parameter estimation. Thus, precomputing the Koopman operator approximation in a sequential manner before estimating parameters saves a lot of computation time. This is because in the iterations of the algorithm, we only perform matrix multiplications in parallel, which are very cheap, particularly when using a GPU.

\section{Conclusion}
In this article we proposed a new perspective for the filtering problem, using the Koopman operator to reformulate the problem in infinite dimension, showing that a nonlinear problem can be seen as a infinite dimensional one, but linear. With this, we propose an approximation of the filtering problem solution and then apply the classical Kalman Filter. This accomplish an error bound of order \(O(N^{-1/2})\), that is natural since the approximation is based in averages of operators and concentration bounds, as Hoeffding's inequality. But, in the way, we derived an error decomposition for Kalman rules between two different systems, then derived the filter, Kalman Koopman Filter (KKF), with the rigor of working in Reproducing Kernel Hilbert Spaces and using Koopman operator properties, Kernel Bayes rule and other classical techniques in functional analysis.

The new filtering algorithm KKF shows good performance for linear systems and nonlinear systems as SIR, outperforming others in literature as EKF, UKF and Particle Filters, not only in precision, but especially in execution time, showing that the KKF algorithm we propose in this work is really efficient. This efficient comes from the fact that Koopman operator can be precomputed and the only apply matrices multiplications, that are more cheap in practice.

Also, we show that the proposed parameter estimation algorithm is effective and is competitive with respect to MCMC methods, that are the state of the art in parameter estimation for some systems and models, in particular in the class of tested systems, and in others used in other applications than epidemiology. 

With this, this article goes beyond than a new filtering algorithm, proposing a new way to understand completly the essence of the filtering problem through the Koopman operator, and with this generating a new idea for some other Koopman operator based algorithms that can be usefull in engineering, science and other fields, promising to be more efficient and more understanble in practice and theory.

\section*{Acknowledgements}
This work was supported by Center for Mathematical Modeling (CMM) Basal fund FB210005 for center of excellence, FONDECYT 1240200, FONDAP/1523A0002, ECOS240038 and DO210001 all from ANID-Chile, and Joint Cooperation Fund Chile-Mexico 2022: 'Mathematical
Modeling of Epidemic Processes, Incorporating Population Structure, Regional Distribution and
Risk Groups'.

\bibliographystyle{plainnat}
\bibliography{references}
\end{document}